\newtheorem{theorem}{Theorem}[section]
\newtheorem*{theorem*}{Theorem}
\newtheorem{lemma}[theorem]{Lemma}
\newtheorem{proposition}[theorem]{Proposition}
\newtheorem{corollary}[theorem]{Corollary}
\theoremstyle{definition}
\newtheorem{definition}[theorem]{Definition}
\newtheorem{remark}[theorem]{Remark}
\DeclareMathOperator{\red}{red}
\DeclareMathOperator{\ext}{ext}
\DeclareMathOperator{\conv}{conv}
\DeclareMathOperator{\Emp}{\mathfrak{m}}
\DeclareMathOperator{\supp}{supp}
\DeclareMathOperator{\Diff}{Diff}
\DeclareMathOperator{\Per}{Per}
\DeclareMathOperator{\per}{mp}
\DeclareMathOperator{\M}{\mathcal{M}}
\DeclareMathOperator{\MT}{\mathcal{M}_T}
\DeclareMathOperator{\MS}{\mathcal{M}_S}
\DeclareMathOperator{\CM}{C_\mathcal{M}}
\DeclareMathOperator{\MTe}{\M_T^e}
\DeclareMathOperator{\MTp}{\M_T^{\text{co}}}
\DeclareMathOperator{\MTpos}{\M_T^+}
\DeclareMathOperator{\MTmix}{\M_T^{\text{mix}}}
\DeclareMathOperator{\lang}{\mathcal{L}}
\newcommand{\alf}{\mathcal{A}}
\newcommand{\words}{\alf^*_r}
\DeclareMathOperator{\cch}{\overline{\conv}}
\DeclareMathOperator{\lab}{\Theta}
\DeclareMathOperator{\ini}{i}
\DeclareMathOperator{\term}{t}
\newcommand{\D}{\vec{d}}
\newcommand{\eps}{\varepsilon}
\newcommand{\R}{\mathbb{R}}
\newcommand{\N}{\mathbb{N}}
\newcommand{\db}{B}
\newcommand{\emptyw}{\bot}
\newcommand{\wss}{W^{\textrm{ss}}}
\newcommand{\wsse}{W^{\textrm{ss}}_\eps}
\newcommand{\wsu}{W^{\textrm{su}}}
\newcommand{\wsue}{W^{\textrm{su}}_\eps}
\author{Katrin Gelfert \and Dominik Kwietniak}
\address{
Institute of Mathematics, Federal University of Rio de Janeiro, Cidade
Universitaria - Ilha do Fund\~ao, Rio de Janeiro 21945-909, Brazil}
\email{gelfert@im.ufrj.br}
\address{
Faculty of Mathematics and Computer Science, Jagiellonian University in Krakow, ul. \L o\-jasiewicza 6, 30-348 Krak\'ow, Poland}
\email{dominik.kwietniak@uj.edu.pl}
\urladdr{www.im.uj.edu.pl/DominikKwietniak/}
\thanks{This work was initiated during the visit of DK at the Federal University of Rio de Janeiro and it was finished during the visit of KG to the Jagiellonian University in Krak\'{o}w. The hospitality of these institutions is gratefully acknowledged.
The research of Dominik Kwietniak was supported by the  National Science Centre (NCN) grant Maestro 2013/08/A/ST1/00275. KG has been supported in part by EU Marie-Curie IRSES Brazilian-European partnership in Dynamical Systems (FP7-PEOPLE-2012-IRSES 318999 BREUDS)}
\title[On density of ergodic measures and generic points]{On density of ergodic measures\\ and generic points} 
\date{\today}
\begin{document}

\begin{abstract}
We provide conditions which guarantee that ergodic measures are dense in the simplex of invariant probability measures of a dynamical system given by a continuous map acting on a Polish space. Using them we study generic properties of invariant measures and prove that every invariant measure has a generic point. In the compact case, density of ergodic measures means that the simplex of invariant measures is either a singleton of a measure concentrated on a single periodic orbit or the Poulsen simplex. Our properties focus on the set of periodic points and we introduce two concepts:  close\-ability with respect to a set of periodic points and link\-ability of a set of periodic points. Examples are provided to show that these are independent properties. They hold, for example, for systems having the periodic specification property. But they hold also for a much wider class of systems which contains, for example,  irreducible Markov chains over a countable alphabet, all $\beta$-shifts, all $S$-gap shifts, ${C}^1$-generic diffeomorphisms of a compact manifold $M$, and certain geodesic flows of a complete connected negatively curved manifold.
\end{abstract}

\maketitle

\tableofcontents

\section{Introduction}

We study simplices of invariant measures of dynamical systems. By a \emph{dynamical system} $(X,T)$ we mean 
a complete separable metric space $X$ together with a continuous map $T\colon X\to X$. Our goal is to determine when ergodic measures are dense in the space of $T$-invariant Borel probability measures $\MT(X)$.

Recall that if $X$ is compact, then the set $\MT(X)$ equipped with the weak$\ast$  topology is a Choquet simplex and the extreme points of $\MT(X)$ are the ergodic measures. 
Choquet simplices generalize the classical 
$k$-dimensional simplices (convex hulls of $k+1$ affinely  independent points in $\R^k$). Any point in a Choquet simplex is represented by a unique probability measure on the extreme points. Therefore, if ergodic measures are dense then either $\MT(X)$ is a singleton or a non-trivial Choquet simplex in which extreme points are dense.

The first example of the latter kind was constructed by Poulsen \cite{Poulsen}. An infinite-dimensional Choquet simplex whose extreme points are dense seems to be very far from the intuition built on the finite-dimensional case.
But this is only one of many  remarkable properties of this highly homogeneous object (see Section \ref{sec:Poulsen}). Lindenstrauss, Olsen, and Sternfeld \cite{LOS} proved that a metrizable non-trivial simplex with dense extreme points is unique up to an affine homeomorphism, and therefore, one may talk about  \emph{the} Poulsen simplex. Furthermore, every Choquet simplex appears as a face of the Poulsen simplex.
Downarowicz proved that 
each abstract Choquet simplex is affinely homeomorphic to the set of invariant measures of some minimal compact dynamical system (see \cite{DownarChoquet} for details, references and further historical remarks). 
So the Poulsen simplex may be regarded as the richest possible structure for a set of
invariant measures of a compact dynamical system.

It seems natural to ask (see \cite[p. 654]{P}) when the ergodic measures are dense in the space of all invariant probability measures and, if this is a case,  what  are the properties of a typical invariant measure. 
So far, this question was considered mostly in the compact case  and the specification property (or one of its  generalizations) was the main tool for such investigations.
%
Sigmund~\cite{Sigmund,Sigmund-SPEC} proved that for a compact dynamical system $(X,T)$ with the periodic specification property the measures corresponding to periodic points are dense in $\MT(X)$ (and hence $\MT(X)$ is the Poulsen simplex) and the set of strongly mixing measures is of first category (below we provide more details on Sigmund's work).
Parthasarathy \cite{P} obtained similar results, assuming that $X$ is a product of countably many copies of a complete separable metric space and $T$ is the shift transformation.

Roughly speaking, \emph{periodic specification property} says that given an arbitrary number of arbitrarily long orbit segments, one can find a periodic orbit which stays $\varepsilon$-close to each segment and spends a fixed number (which depends on $\varepsilon$ only) of iterations between the consecutive segments (see Section~\ref{sec:prelimii}).
 This property holds for a number of topological dynamical systems such  as, for example, topologically mixing shifts of finite type, sofic shifts~\cite{DGS}, and  cocyclic shifts~\cite{Kwapisz}, and  topologically mixing continuous maps on the interval~\cite{Blokh83,Buzzi}.
For a smooth dynamical system, specification is closely related to hyperbolicity and holds for any basic set of an axiom A-diffeomorphism \cite{Bowen}.

In other systems the periodic specification property is usually difficult to verify and it fails for example for a general diffeomorphism beyond a uniformly hyperbolic context and for some $\beta$-shifts and $S$-gap shifts. But there still exist numerous extensions of Sigmund's results under weaker hypothesis.
Pfister and Sullivan considered two specification-like notions, coined \emph{approximate product property} \cite{PS1} and \emph{$g$-almost product property} \cite{PS2}. Stated naively, these notions ``allow to make a number of errors'' when an orbit traces a specified family of orbit segments, but the number of these errors decays sufficiently fast as the length of the specified orbit segment growths to infinity.

In this paper, we encompass previous results (\cite{ABC}, \cite{CS},  \cite{Sigmund,Sigmund-SPEC}, \cite{ST}) and extend them to apply to new examples.
Focusing on the periodic points only, we identify two topological properties: \emph{linkability} of a set of periodic points and \emph{closeability} with respect to some set of periodic points.
%
A subset $K\subset\Per(T)$ of periodic points of $T$ is \emph{linkable} if,  briefly stated, the dynamical system $T$ restricted to $K$ satisfies the periodic specification property (see Section~\ref{sec:linkability} for full details).
Given $K\subset\Per(T)$,  a point $x\in X$ is \emph{$K$-closeable} if, roughly speaking, for every $\varepsilon>0$ for infinitely many $n$ one can find a periodic orbit of some point in $K$ with period roughly equal to $n$ which stays $\varepsilon$-close to the initial orbit segment $x,T(x),\ldots,T^{n-1}(x)$ of $x$.
A dynamical system $(X,T)$ is \emph{$K$-closable} if for every ergodic measure there exists a generic $K$-closable point. The latter implies that every ergodic measure is \emph{$K$-approximable}, that is, is weak$\ast$-accumulated by measures supported on periodic orbits of points in $K$ (see Section~\ref{sec:closability} for full details).
Each of these properties allows us to carry over a respective step in the proof of density of ergodic measures. Neither of them implies the other. Together they  imply that every invariant measure has generic points and allow to carry over Sigmund's results on generic properties of measures from $\MT(X)$.

The following is our main result (see Section \ref{sec:prelimii} for precise definition of the concepts involved).

\begin{theorem}\label{thm:main} 
Let $(X,T)$ be a dynamical system on a Polish metric space.
Assume that $K\subset\Per(T)$ is a linkable subset of periodic points of $T$ and that
$(X,T)$ is $K$-approximable (in particular, if $(X,T)$ is $K$-closeable). Let $\CM(T)$ be the measure center of $T$, that is, the complement of the union of all universally null sets. Then:
\begin{enumerate}
  \item \label{Sig:1} The set $\MTp(K)$ of measures supported on periodic orbits of points in $K$ is dense in $\MT(X)$.
  \item \label{Sig:2} The set $\MTe(X)$ of ergodic measures is residual in $\MT(X)$.
  \item \label{Sig:2a} The set $\MTe(X)$ is arcwise connected provided that $X$ is compact. 
  \item \label{Sig:3}  If $X$ is compact, then either $\MT(X)$ is a singleton or $\MT(X)$ is the Poulsen simplex.
  \item \label{Sig:5} The set $\MTe(X)\cap\MTpos(\CM(T))$ of ergodic measures with full support in $\CM(T)$ is residual in $\MT(X)$.
  \item \label{Sig:6} The set $\MTmix(X)$ of strongly mixing measures is of first category in $\MT(X)$.
  \item \label{Sig:7}  A map $T$ restricted to $\CM(T)$ is topologically transitive.
  \item \label{Sig:8} The set $\cup_{n=0}^\infty T^n(K)\subset\Per(T)$ is dense in $\CM(T)$.
  \item \label{Sig:9} If $\MT(X)$ is not a singleton,
  then the set of all non-atomic measures is residual in $\MT(X)$.
  \item \label{Sig:10} For every non-empty, closed, connected $V\subset \MT(X)$ there is a dense set $D\subset\CM(T)$ such that $V_T(x)=V$ for every $x\in D$, where $V_T(x)$ denotes the set of all weak$\ast$ accumulation points of the empirical measure $\frac1n(\delta_x+\ldots+\delta_{T^{n-1}(x)})$. In particular, every invariant measure has a generic point.
  \item \label{Sig:11}The set of points having maximal oscillation is residual in $\CM(T)$.
  \item \label{Sig:12} If $\MT(X)$ is not a singleton, then the set of quasiregular points is of first category.
\end{enumerate}
\end{theorem}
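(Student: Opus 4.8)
The plan is to build everything on top of two engines supplied by the hypotheses: linkability of $K$, which gives a periodic-specification mechanism inside $K$ and hence lets us concatenate finitely many periodic orbits from $K$ into a single new periodic orbit in $K$ that shadows all of them; and $K$-approximability, which says each ergodic measure is a weak$\ast$ limit of measures in $\MTp(K)$. First I would prove item \eqref{Sig:1}: combine $K$-approximability with the fact that convex combinations of measures on periodic orbits in $K$ can be realized (up to small weak$\ast$ error) by a single periodic orbit in $K$ obtained by linking. Here one takes a measure $\mu\in\MT(X)$, approximates it first by a finitely-supported (in the ergodic decomposition) combination of ergodic measures using Choquet/ergodic decomposition, then replaces each ergodic component by a nearby measure in $\MTp(K)$, then links the corresponding periodic orbits with appropriately chosen relative weights (controlled by the number of times each orbit is repeated in the concatenation). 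This is the density statement and the foundation of everything else.

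Next I would derive the topological consequences \eqref{Sig:7}, \eqref{Sig:8}: density of $\bigcup_n T^n(K)$ in $\CM(T)$ follows because supports of measures in $\MTp(K)$ are dense (by \eqref{Sig:1}) and the measure center is exactly the closure of the union of supports of invariant measures; transitivity of $T|_{\CM(T)}$ then follows from the linking mechanism, which connects any two periodic orbits in $K$ by a single orbit passing near both. For the genericity statements \eqref{Sig:2}, \eqref{Sig:5}, \eqref{Sig:6}, \eqref{Sig:9}, \eqref{Sig:12} I would follow Sigmund's scheme: ergodicity, full support in $\CM(T)$, and non-atomicity are each tail-type / $G_\delta$ conditions on $\MT(X)$ (intersections over a countable basis of open sets of the sets of measures giving that set positive-or-full measure, etc.), and each such $G_\delta$ set is dense because periodic-orbit measures — or the linked ones — are dense and we can perturb any measure to one satisfying the condition; strong mixing is shown to be first category by exhibiting, for a fixed pair of basic sets, a dense $G_\delta$ of measures failing the mixing estimate. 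Item \eqref{Sig:4} is then immediate from \eqref{Sig:1}, \eqref{Sig:2} and the Lindenstrauss–Olsen–Sternfeld uniqueness of the Poulsen simplex (dense extreme points plus non-singleton, in the compact metrizable case), and \eqref{Sig:2a} follows from \cite{LOS} as well since the extreme boundary of the Poulsen simplex is arcwise connected.

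For the saturation statements \eqref{Sig:10}, \eqref{Sig:11} — which contain the ``every invariant measure has a generic point'' assertion singled out in the excerpt — I would construct, given a non-empty closed connected $V\subset\MT(X)$, orbits whose empirical measures have $V$ as their exact $\omega$-limit set in the weak$\ast$ topology. The idea is: pick a sequence $(\mu_k)$ in a countable weak$\ast$-dense subset of $V$ that is ``$\varepsilon_k$-chained'' along $V$ (possible by connectedness, so consecutive $\mu_k$ are close and every point of $V$ is approached infinitely often), realize each $\mu_k$ approximately by a periodic orbit in $K$ via \eqref{Sig:1}, and link these periodic orbits into a single orbit, spending a rapidly growing number $N_k$ of iterates near the $k$-th orbit so that the empirical measure is dragged close to $\mu_k$, while the bounded gap lengths from linkability and the slowly-varying nature of the chain ensure the accumulation set is exactly $V$ and not larger. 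Running this with $V=\{\mu\}$ a singleton gives a generic point for $\mu$; running it with $V=\MT(X)$ (or with $V$ equal to the whole set when that is connected, which it is, being a simplex) gives points of maximal oscillation, and a Baire-category version — intersecting over a countable base the sets of points whose empirical measures enter each basic neighborhood — upgrades ``dense set $D$'' to ``residual set'' for \eqref{Sig:11}. The main obstacle throughout is the bookkeeping in this construction: one must choose the repetition counts $N_k$ and the error tolerances $\varepsilon_k$ so that the transition gaps (whose lengths are bounded but nonzero) contribute negligibly to the Birkhoff averages, so that the empirical measure is genuinely close to $\mu_k$ for a long stretch of times around step $k$, and so that no spurious accumulation points outside $V$ appear; making these estimates uniform enough to also yield the Baire-category (residual) versions, rather than just a dense set, is the delicate point, and it is exactly where linkability (bounded, $\varepsilon$-dependent gaps) rather than a weaker almost-specification is used.
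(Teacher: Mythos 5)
Your proposal is correct and follows essentially the same route as the paper: density of $\MTp(K)$ via $K$-approximability plus the fact that linkability makes CO-measures dense in their closed convex hull, the Lindenstrauss--Olsen--Sternfeld results for items \eqref{Sig:2a} and \eqref{Sig:3}, Sigmund/DGS-style Baire-category arguments for the residuality and first-category statements, and an iterated-linking (specification-type) construction with a chained sequence in $V$ and rapidly growing block lengths for items \eqref{Sig:10}--\eqref{Sig:12}, with the same bookkeeping issues you identify. The only local deviation is your direct proof of \eqref{Sig:7} by linking two periodic orbits near the given open sets (the paper instead intersects the residual sets of ergodic measures and of measures with support equal to $\CM(T)$); both work, and note only that the linkability ``gaps'' are proportionally small (of size at most $\eps p_1$) rather than bounded, which is in fact exactly what the Birkhoff-average estimates need.
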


To prove the above theorem, we essentially follow the proof by Sigmund~\cite{Sigmund,Sigmund-SPEC}, which seems to be natural  and which method was, in fact, applied already many times in the literature.
Our key point is that we identify the properties which are strong enough to reach the desired conclusion, yet flexible enough to apply to various new settings.

We also note that the proof of density of ergodic measures breaks naturally into two independent steps. The first step guarantees density of periodic measures in the set of ergodic measures, while the second one shows that periodic measures are dense in their convex hull. The independence of these steps follows because there is no implication between closeability and linkability. We provide examples which suggest that further generalizations of this approach seem to be hardly possible.

Let us now compare Theorem~\ref{thm:main} with previous results.

First observe that a compact system $(X,T)$ with the periodic specification property is closable with respect to the linkable set $\Per(T)$ (see Propositions~\ref{prop:spec-closeable} and~\ref{prop:spec-link}) and hence Theorem~\ref{thm:main} applies to systems with the periodic specification property as it was essentially shown by Sigmund (compare~\cite{DGS}).
Items~\eqref{Sig:2a} and \eqref{Sig:3} 
 have been shown in particular case of subshifts of finite type~\cite{Sigmund77}
 and of an axiom A diffeomorphism~\cite{Sigmund}, respectively,
 and are new in this generality (though they are almost immediate consequences of~\cite{LOS}).

 Note that some of our assertions are stated for $T$ restricted to the measure center $\CM(T)$ (see Section~\ref{sec:prelimii}). This is because the behavior of $T$ outside the measure center is negligible from the \emph{measure-theoretic} point of view, even though the measure center may be \emph{topologically} negligible (nowhere dense) in $X$. Thus it is not surprising that properties of the simplex of invariant measures of $(X,T)$ have no influence on the behavior of $T$ outside its measure center $\CM(T)$.
In relation to item~\eqref{Sig:5} let us remark that~\cite{Sigmund,Sigmund-SPEC} for a compact system $(X,T)$ with the periodic specification property show that invariant measures being positive on all nonempty open subsets of $X$ are residual in $\MT(X)$; it follows from the fact that the periodic specification property of $T$ implies density of the set of periodic points in $X$, hence $\CM(T)=X$. In the general case, our hypotheses do not guarantee that such a measure with full support exists.
Moreover, observe that the measure center is always contained in the non-wandering set, but the former can be strictly smaller than the latter.
We provide an example to illustrate these facts (see Proposition \ref{thm:mix}).

Another remark relates to item~\eqref{Sig:7}. Sigmund~\cite{Sigmund-SPEC} showed that a compact system $(X,T)$ with the periodic specification property is topologically mixing. Under the general hypotheses of Theorem~\ref{thm:main} one can only prove  that $T$ restricted to $\CM(T)$ is topologically transitive. Indeed, observe that the theorem applies to all $S$-gap shifts for which the measure center is the whole shift space and observe that not all $S$-gap shifts are mixing (see Section~\ref{sec:S-gap}).

 In the compact case it is known that specification implies the $g$-almost product property and the latter implies the approximate product property (see \cite{Y}\footnote{Note that there is no standard terminology regarding the variants of the specification property. In particular, the terminology in \cite{Y} is different than ours.}). The converse implications are false.
 Pfister and Sullivan \cite{PS1} proved  that the ergodic measures are \emph{entropy dense} in $\MT(X)$
 for a compact dynamical system $(X,T)$ with  the approximate product property,
that is, for any $\mu\in\MT(X)$, in any neighborhood of $\mu$ in $\MT(X)$ and $\eps>0$ there is an ergodic measure $\nu$ such that
$h_\mu(T)-\eps< h_\nu(T)$. In particular, every measure is the weak$\ast$ limit of a sequence of ergodic measures.
Clearly, in this case, 
the simplex of invariant measures is either Poulsen or a singleton.
Our properties are strong enough to imply that the ergodic measures are dense, but not necessarily entropy dense (see Propositions~\ref{prop:counter-entropy-gap} and~\ref{prop:counter}). 

But our approach gives also new results.

We point out that, in comparison to~\cite{Sigmund-SPEC}, our main theorem remains valid for non-necessarily compact dynamical systems, as in the pioneering work of Parthasharaty \cite{P}. They apply, for example, to irreducible topological Markov chains with countable alphabets.

Our approach provides also new insights to $\beta$-shifts and $S$-gap shifts (see  Sections~\ref{sec:S-gap} and~\ref{sec:betashift}). It is known (see \cite{Hof1987,Hof1988} or \cite{Sig76}) that the simplex of invariant measures of a $\beta$-shift is Poulsen, but the existence of generic points  and generic properties of measures (Theorem~\ref{thm:main} items~\eqref{Sig:10} and~\eqref{Sig:3}--\eqref{Sig:6},\eqref{Sig:9},\eqref{Sig:11},\eqref{Sig:12}) are new (cf. \cite{BM}).
These families of shift spaces have rich, yet usually non-uniform structure and methods which apply to them are likely to extend to similar systems. For example these shift spaces are a testing ground for results on intrinsic ergodicity\footnote{A dynamical system $(X,T)$ is \emph{intrinsically ergodic} if it possesses a unique measure of maximal entropy.} developed by Climenhaga and Thompson \cite{CT2012}. A vast majority of both $S$-gap shifts and $\beta$-shifts fail to satisfy the specification property, however they do satisfy our hypotheses. We expect that even more applications will appear in the future. For example we conjecture that Theorem \ref{thm:main} can be applied to systems considered in \cite{BMGP, GM}.

Our \emph{closeability} property is a much weaker version of a property guaranteed by (what it is commonly referred to) the \emph{Closing Lemma}. Such a lemma, roughly speaking, states that \emph{every} piece of orbit \emph{every} time it returns to its initial point sufficiently closely is shadowed by a periodic orbit (see Section~\ref{sec:closability}). Such a result holds, for example, for generic orbits of smooth dynamical systems preserving a hyperbolic measure and, in particular, for uniformly hyperbolic dynamical systems~\cite{KH:95}. It was also shown for some geodesic flows of a (non-necessarily compact) negatively curved manifold \cite{CS}. However, there is no such result for nonsmooth dynamical systems. On the other hand closeability is a purely topological concept which holds for example for some symbolic systems.

Finally, we note that ideas very similar to our concepts of closeability and linkability were independently used in 
 the work of Abdenur, Bonatti, and Crovisier \cite{ABC} and Coudene and Schapira \cite{CS} (see also Sun and Tian~\cite{ST}). But to our best knowledge  items~\eqref{Sig:1},\eqref{Sig:5}, and~\eqref{Sig:6}  in Theorem~\ref{thm:main} are new  for $C^1$-generic diffeomorphisms of a compact manifold (the setting of \cite{ABC}) and geodesic flow on a complete connected negatively curved manifold (considered in \cite{CS}). See Sections~\ref{subsec:C1gendif} and~\ref{subsec:flows} for more details.

Observe also that ergodic measures can be dense in the simplex of invariant measures even if the underlying dynamical system has no periodic point at all. This happens, for example, for systems having the approximate product property and hence (entropy) density of ergodic measures \cite{PS1}.
The methods in this paper apply only to systems with many periodic points.
Note that both properties, closeability and linkability, can be adapted to some system without periodic points \cite{Lac:}.

The paper is organized as follows. Section~\ref{sec:prelimii} contains some preliminaries on the general theory of dynamical systems. Section~\ref{sec:3exam} recalls  concepts from symbolic dynamics illustrated by examples. Section~\ref{sec:closability} introduces the concept of closeability and Section~\ref{sec:linkability} introduces linkability.  In Section~\ref{sec:generic} we study the existence of generic points. Theorem~\ref{thm:main} is shown in Section~\ref{sec:mainresult}.
Section \ref{sec:applications} summarizes and discusses various classes examples to which Theorem \ref{thm:main} applies. Section~\ref{sec:counter} provides examples and counter-examples which put different concepts into relation to each other. Section~\ref{sec:openqu} states some open questions.

\section{Preliminaries}\label{sec:prelimii}

For background material on elementary ergodic theory and topological dynamics  we refer to \cite{DGS, Walters}.

\subsubsection*{Standing assumptions} Throughout this paper, $X$ is a complete separable metric space with metric $\rho$ and $T\colon X\to X$ is a continuous map. Without loss of generality we assume that $T$ is onto.
By a \emph{dynamical system} we mean a pair $(X,T)$ and often identify it with a map $T\colon X\to X$. Let $\N$ denote the set of positive integers and let $\N_0=\N\cup\{0\}$. Recall that a subset of a topological space is of  \emph{first category} if it can be written as countable union of closed nowhere dense sets. It is \emph{residual} if it is a countable intersection of open and dense sets.

\subsubsection*{Choquet theory} Let $K$ be a non-empty metrizable convex compact subset of a locally convex topological vector space. By $\ext K$ we denote the set of extreme points of $K$. We say that $K$ is a \emph{Choquet simplex} if every point of $K$ is the barycenter of a unique probability measure supported on the set of extreme points of $K$ (see \cite[pp. 174--5]{Simon}, an excellent reference for Choquet theory is \cite{Phelps}, see also~\cite{D-book}.

\subsubsection*{The Poulsen simplex}\label{sec:Poulsen}
A \emph{Poulsen simplex} is a Choquet simplex $K_P$ such that
$\overline{\ext K_P}=K_P$ and $\ext K_P\neq K_P$.
We list some of its properties (see~\cite{LOS} for more details).
\begin{itemize}
\item  The Poulsen simplex is unique up to affine homeomorphism: Any two non-trivial metrizable Choquet simplices with dense sets of extreme points are equivalent under an affine homeomorphism.
\item  Any Choquet simplex is affinely homeomorphic to a face of the Poulsen simplex (a face of a simplex $K$ is a closed convex hull of some nonempty subset of $\ext K$).
\item The Poulsen simplex is homogeneous: Any two faces of $K_P$ that are affinely homeomorphic are
homeomorphic under an affine automorphism of the Poulsen simplex.
\item Let $Q=[0,1]^\infty$ be the Hilbert cube and let $\mathcal{I}=(0,1)^\infty$ be its pseudo-interior. There exists a homeomorphism $\Phi\colon Q\to K_P$ such that $\Phi(\mathcal{I})=\ext K_P$. In particular, $\ext K_P$ is arcwise connected.
\end{itemize}

\subsubsection*{Topological dynamics} An \emph{orbit} of a point $x\in X$ is the set $\{T^n(x)\colon n\in\N_0\}$. A point $x\in X$ is \emph{periodic} for $T$ if $T^k(x)=x$ for some $k\in\N$. Any such $k$ is a \emph{period} for $x$ and the least possible period is the \emph{minimal period} of $x$, denoted by $\per(x)$. We write $\Per(T)$ for the set of periodic points of $T$.

We say that $T$ is \emph{transitive} if for every two non-empty open sets $U,V\subset X$ there is $n>0$ such that $U\cap T^{-n}(V)\neq\emptyset$.
A map $T$ is \emph{mixing} if there is $N\in\N$ such that for all $n\ge N$ we have $U\cap T^{-n}(V)\neq\emptyset$. We say that $(X,T)$ is \emph{minimal} if the orbit of every point is dense in $X$.

We say that dynamical system $(Y,S)$ is a \emph{factor} of $(X,T)$ and $(X,T)$ is an \emph{extension} of $(Y,S)$ if there exists a continuous onto map $\Phi\colon X\to Y$ (called a \emph{semiconjugacy}) such that $\Phi\circ T=S\circ \Phi$. A \emph{conjugacy} is a semiconjugacy which is also a homeomorphism. In that case we also say that $(X,T)$ and $(Y,S)$ are \emph{conjugate} or \emph{isomorphic} dynamical systems.

\subsubsection*{Specification} Let $N\in\N_0$ and $I\subset \N_0$ be such that $I=\N_0\cap \bigcup_{i=1}^k [a_i,b_i]$ for some $k\in\N$ and some integers
\[
0= a_1 \le b_1 < a_2\le b_2<\ldots<a_k\le b_k.
\]
A map $\xi\colon I \to X$ is an \emph{$N$-spaced specification} for $T$ if
$a_{j+1}-b_j\ge N$ for $j=1,\ldots,k-1$ and $\xi(t) = T^{t-s}(\xi(s))$ for any $s,t\in\N_0$ such that $a_i\le s\le t\le b_i$ for some $i=1,\ldots,k$.
A specification $\xi\colon I \to X$ is \emph{$\eps$-traced} by a point $z\in X$ if
$\rho(\xi_j,T^j(z))<\eps$ for $j\in I$. We say that $T$ has the \emph{periodic specification property} or simply the \emph{specification property} if for every $\eps>0$ there is an integer $N=N(\eps)$ such that every $N$-spaced specification $\xi\colon I\to X$ is $\eps$-traced by a point $z\in X$ with $T^p(z)=z$, where $p = \max I +N$.

\subsubsection*{Invariant measures}
Let $\M(X)$ denote the set of all Borel probability measures on $X$. In the following, all topological notions refer to the weak$\ast$ topology of $\M(X)$. It is well known that $\M(X)$ equipped with the weak$\ast$ topology is a complete metrizable topological space (see \cite[\S 6.1]{Walters}). The following defines a metric on $\M(X)$
\[
\D(\mu,\nu)=\inf \{\eps>0\colon \mu(A)\le \nu (A^\eps) +\eps,\text{ for every Borel set }A\subset X\},
\]
where $\mu,\nu\in \M(X)$ and $A^\eps=\{x\in X\colon \rho(x,A)<\eps\}$ denotes the $\eps$-neighborhood of $A$  (see \cite{Strassen}). The \emph{support} of a measure $\mu\in\M(X)$, denoted by $\supp\mu$, is the smallest closed set $C\subset X$ such that $\mu(C)=1$.

Let $\MT(X)$ denote the set of all $T$-invariant measures. We write $\MTe(X)$ for the subset of all ergodic measures. We say that $T$ is \emph{uniquely ergodic} if there is only one $T$-invariant measure.

Recall that if $X$ is compact, then $\MT(X)$ is a non-empty Choquet simplex (see \cite[\S6.2]{Walters}). In a non-compact setting $\MT(X)$ can be empty (see \cite{GM}), but is it always convex and closed, and $\MT(X)$ is the closure of the convex hull of $\MTe(X)$. In particular, if $\MT(X)\neq\emptyset$, then $\MTe(X)$ is always a non-empty $G_\delta$-set. Since $\MT(X)$ is a complete
metric space, a subset of $\MT(X)$ is residual if, and only if, it is a dense $G_\delta$.

A measure $\mu\in\MT(X)$ is \emph{strongly mixing} if for every two Borel sets $A,B\subset X$ we have $\mu(A\cap T^{-n}B)\to \mu(A)\mu(B)$. We denote by $\MTmix(X)$ the set of all strongly mixing measures.
A measure $\mu$ has \emph{full support} if $\supp\mu=X$.
Given a Borel set $B\subset X$ we write $\MTpos(B)$ for the set of all $\mu\in\MT(X)$ such that  $B\subset \supp\mu$. In particular $\MTpos(X)$ denotes the set of all  measures with full support.

\subsubsection*{Empirical measures and generic points}
Given a point $x\in X$ and $n\ge 1$ we consider the \emph{$n$-th empirical measure of $x$} $\Emp(x,n)$ defined by
\[
	\Emp(x,n)(A)=\frac{1}{n}\#\{0\le j<n\colon T^j(x)\in A\}
\]
for each Borel set $A\subset X$.
Let $V_T(x)$ be the set of all accumulation points of the sequence $(\Emp(x,n))_{n=1}^\infty$. For $x\in X$ the set $V_T(x)$ is a nonempty
closed connected subset of $\MT(X)$ (see \cite[Proposition 3.8]{DGS}).

We say that a point $x\in X$ is \emph{generic} for a measure
$\mu\in\MT(X)$ if $\Emp(x,n)$ converges in the weak$\ast$ topology to $\mu$ as $n\to\infty$, that is,  if $V_T(x)=\{\mu\}$.
A point $x\in X$ is \emph{quasiregular}
with respect to $T$ if it is generic with respect to some
measure $\mu\in\MT(X)$. A point $x\in X$ has \emph{maximal oscillation} if
$V_T(x)=\MT(X)$. 

It is a consequence of the Birkhoff ergodic theorem that for every ergodic measure the set of generic points has full measure, in particular every ergodic measure has a generic point.

\subsubsection*{CO-measures}
If $x\in\Per(T)$ with $\per(x)=k$, then the measure $\gamma(x)=\Emp(x,k)$ is invariant for $T$.
We call such a measure a \emph{CO-measure} and by $\MTp(X)$ we denote the set of all CO-measures.
Given a set $K\subset\Per(T)$, denote by $\MTp(K)$ the set of CO-measures supported on orbits of points in $K$.

\subsubsection*{Measure center}
An open set $U\subset X$ is \emph{universally null} if $\mu(U)=0$ for every $\mu\in\MT(X)$. We call  the complement of the union of all universally null sets   the \emph{measure center} of $T$ and denote it by $\CM(T)$. It is the smallest closed set such that $\mu(\CM(T))=1$ for every $\mu\in\MT(X)$. We have $x\in \CM(T)$ if, and only if, for every open set $U\subset X$ containing $x$ there is a measure $\mu\in\MT(X)$ such that $\mu(U)>0$. The measure center $\CM(T)$ is a non-empty, closed, and $T$-invariant set. Thus, one can consider the restriction $T|_{\CM(T)}\colon \CM(T)\to \CM(T)$.

\medskip
We end this section with a technical lemma we will frequently use.
Its proof is straightforward.
\begin{lemma}\label{lem:aux}
Let $k\in \N_0$, $m,n\in\N$, $x\in X$, $\mu_1,\mu_2\in\M(X)$, and $\alpha,\beta\in[0,1]$.
\begin{enumerate}
\item  \label{lem:initial-close} If $0\le k< n\le m$, then
  \[
  	\D\big(\Emp(x,m),\Emp(T^k(x),n-k)\big)\le \frac1n(m-n+k).
\]	
\item\label{lem:affine-close}
For $j=1,2$ we have $\D(\alpha\mu_1+(1-\alpha)\mu_2,\mu_j)\le \D(\mu_1,\mu_2)$.
\item\label{lem:uniform-close}
If $\nu_1,\nu_2\in\M(X)$ and $\delta=\max\{\D(\mu_j,\nu_j)\colon j=1,2\}$, then
\[
	\D(\alpha\mu_1+(1-\alpha)\mu_2,\beta \nu_1+(1-\beta)\nu_2)
	\le |\alpha-\beta|+\delta.
\]
\item\label{lem:convex-comb}
   If $d_1=\D(\Emp(x,m),\mu_1)$ and $d_2=\D(\Emp(T^{m}(x),n),\mu_2)$, then
  \[
  	\D\Big(\Emp(x,m+n),\frac{m}{m+n}\mu_1+\frac{n}{m+n}\mu_2\Big)
	\le \max\{d_1,d_2\}.
\]
\end{enumerate}
\end{lemma}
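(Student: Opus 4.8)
All four assertions are routine consequences of the definition of the metric $\D$, and it is convenient to prove them in the order \eqref{lem:affine-close}, \eqref{lem:uniform-close}, \eqref{lem:convex-comb}, \eqref{lem:initial-close}, each using the previous ones. The only facts I will use are: (i) to check $\D(\mu,\nu)\le\delta$ it suffices to verify $\mu(A)\le\nu(A^\eps)+\eps$ for every Borel $A\subset X$ and every $\eps>\delta$; (ii) $\D$ is a metric, hence symmetric and subadditive, as recalled in the paper; and (iii) $\D$ is dominated by the total variation distance, since $\sup_A|\mu(A)-\nu(A)|\le d$ gives $\mu(A)\le\nu(A)+d\le\nu(A^\eps)+\eps$ whenever $\eps>d$.

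For \eqref{lem:affine-close}: fix $\eps>\D(\mu_1,\mu_2)$ and a Borel set $A$; combining $\mu_1(A)\le\mu_2(A^\eps)+\eps$ with $\mu_2(A)\le\mu_2(A^\eps)$ gives $\bigl(\alpha\mu_1+(1-\alpha)\mu_2\bigr)(A)\le\mu_2(A^\eps)+\alpha\eps\le\mu_2(A^\eps)+\eps$, that is, $\D\bigl(\alpha\mu_1+(1-\alpha)\mu_2,\mu_2\bigr)\le\D(\mu_1,\mu_2)$; the case $j=1$ follows by interchanging $\mu_1\leftrightarrow\mu_2$, replacing $\alpha$ by $1-\alpha$, and using symmetry of $\D$.

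For \eqref{lem:uniform-close}: by the triangle inequality, $\D\bigl(\alpha\mu_1+(1-\alpha)\mu_2,\beta\nu_1+(1-\beta)\nu_2\bigr)$ is at most $\D\bigl(\alpha\mu_1+(1-\alpha)\mu_2,\alpha\nu_1+(1-\alpha)\nu_2\bigr)+\D\bigl(\alpha\nu_1+(1-\alpha)\nu_2,\beta\nu_1+(1-\beta)\nu_2\bigr)$. For $\eps>\delta$, the convex combination with weights $\alpha,1-\alpha$ of the inequalities $\mu_j(A)\le\nu_j(A^\eps)+\eps$ shows the first summand is $\le\delta$. For the second, $\bigl(\alpha\nu_1+(1-\alpha)\nu_2\bigr)(A)-\bigl(\beta\nu_1+(1-\beta)\nu_2\bigr)(A)=(\alpha-\beta)\bigl(\nu_1(A)-\nu_2(A)\bigr)$ has absolute value $\le|\alpha-\beta|$, so by (iii) it is $\le|\alpha-\beta|$; adding proves the claim. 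Item \eqref{lem:convex-comb} is then immediate: splitting $\sum_{j=0}^{m+n-1}\delta_{T^j(x)}$ at $j=m$ yields $\Emp(x,m+n)=\tfrac{m}{m+n}\Emp(x,m)+\tfrac{n}{m+n}\Emp(T^m(x),n)$, and \eqref{lem:uniform-close} applied with $\mu_1=\Emp(x,m)$, $\mu_2=\Emp(T^m(x),n)$, $\nu_1,\nu_2$ the measures of the statement and $\alpha=\beta=\tfrac{m}{m+n}$ finishes it.

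Finally, for \eqref{lem:initial-close}, split $\sum_{j=0}^{m-1}\delta_{T^j(x)}$ at $j=k$ and $j=n$ to write $\Emp(x,m)=\lambda\,\Emp(T^k(x),n-k)+(1-\lambda)\eta$, where $\lambda=\tfrac{n-k}{m}\in(0,1]$ and $\eta=\tfrac{1}{m-n+k}\bigl(k\,\Emp(x,k)+(m-n)\,\Emp(T^n(x),m-n)\bigr)$ is a probability measure (with the usual convention that zero-weight terms are dropped, and with nothing to prove when $\lambda=1$). Applying \eqref{lem:uniform-close} with $\mu_1=\nu_1=\Emp(T^k(x),n-k)$, $\mu_2=\nu_2=\eta$, $\alpha=\lambda$, $\beta=1$ (so $\delta=0$) gives $\D\bigl(\Emp(x,m),\Emp(T^k(x),n-k)\bigr)\le|1-\lambda|=\tfrac{m-n+k}{m}\le\tfrac{m-n+k}{n}$, the last inequality because $n\le m$. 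I do not expect any real obstacle here; the only things to watch are the bookkeeping of denominators in \eqref{lem:initial-close} (note that the argument in fact produces the sharper bound with $m$ in the denominator, which implies the stated one since $n\le m$) and the use of symmetry and subadditivity of $\D$, both legitimate because the one-sided inequality in the definition of $\D$ already defines a metric, as noted in the paper.
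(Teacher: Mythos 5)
Your proposal is correct; the paper itself gives no argument for Lemma~\ref{lem:aux} (it only remarks that the proof is straightforward), so there is nothing to compare against, and your write-up is a valid and complete filling-in of the omitted details. The only point worth recording is that your argument for item~\eqref{lem:initial-close} actually yields the sharper bound $\tfrac{1}{m}(m-n+k)$, which implies the stated one since $n\le m$; the reductions of \eqref{lem:initial-close} and \eqref{lem:convex-comb} to \eqref{lem:uniform-close} via the exact convex decompositions of the empirical measures, and the direct verification of \eqref{lem:affine-close} and \eqref{lem:uniform-close} from the defining inequality $\mu(A)\le\nu(A^\eps)+\eps$, are exactly what is needed.
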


\section{Symbolic dynamics and examples}\label{sec:3exam}
The results of this section are included to keep the exposition self-contained. For more details we refer to 
the book of Lind and Marcus~\cite{LM}.

\subsubsection*{Shift spaces}

Let $\alf$ be an at most countable set of \emph{symbols}.
Usually we will consider $\alf=\alf_r=\{0,1,\ldots,r-1\}$ for some $r\in\N$.
Equip $\Omega=\alf^\N$ with the metric
\[
\rho(x,y)=\left\{
                      \begin{array}{ll}
                        2^{-\min\{k\in\N\colon x_k\neq y_k\}}, &  \hbox{if $x\neq y$;} \\
                        0, & \hbox{if $x  = y$.}
                      \end{array}
                    \right.
\]
Then $(\Omega,\rho)$ is a Polish metric space, which is compact if $\alf$ is finite.
We consider the \emph{shift transformation} or simply \emph{shift} $\sigma\colon\Omega\to\Omega$ given by $\sigma(\omega)_i=\omega_{i+1}$. This is a continuous map and we call the dynamical system $(\Omega,\sigma)$ the \emph{full shift}.
A \emph{shift space} over $\alf$ is any $\sigma$-invariant and closed subset of $\Omega$.

\subsubsection*{Words and languages}
A \emph{word} over $\alf$ is a finite sequence of symbols from $\alf$.
Let $\alf^*$ denote the set of all words over $\alf$.
We write $u^\infty$ to denote the periodic sequence which is the \emph{concatenation} of infinitely many copies of a word $u$.
We say that a finite word $u=u_1\ldots u_n$ \emph{occurs} in a sequence $\omega=(\omega_i)_{i=1}^\infty$ if there is $k\in \N$ such that
$\omega_{k+j-1}=u_j$ for all $j=1,\ldots,n$. Recall that $X\subset\Omega$ is a shift space if, and only if, there is a set $\mathcal{F}$ of words over $\alf$ such that $X$ is the set of all sequences that do not contain any occurrence of a word from $\mathcal{F}$~\cite{LM}. We also call $\mathcal{F}$ the \emph{set of forbidden words} for $X$.
The collection $\lang(X)\subset \alf^*$ of all words occurring in points in a shift space $X$ over $\alf$ is the \emph{language of $X$}.

\subsubsection*{Directed countable graphs}
Let $G=(V,E)$ be a \emph{directed graph} with at most countable set of \emph{vertices} $V$
and at most countable set of \emph{(directed) edges} $E$. We allow multiple edges between vertices.
Each edge $e\in E$ joins an \emph{initial vertex} $\ini(e)\in V$ with a \emph{terminal vertex} $\term(e)\in V$.
A \emph{path} of length $k$ from $u\in V$ to $w\in V$ on a graph $G=(V,E)$ is a finite sequence $\pi=(e_1,e_2,\ldots,e_k)$ of edges of $G$ such that
$\ini(e_1)=u$, $\term(e_k)=w$, and $\ini(e_{i+1})=\term(e_i)$ for $1\le i \le k-1$.
A \emph{closed path} (a \emph{loop}) on $G$ is a path $\pi=(e_1,e_2,\ldots,e_k)$ such that $\ini(e_1)=\term(e_k)$.
We say that $G=(V,E)$ is \emph{irreducible} or \emph{connected} if for any two vertices $u,v\in V$ there is a path on $G$ form $u$ to $v$.

\subsubsection*{Topological Markov chains}

An irreducible \emph{topological Markov chain} given by the connected directed graph $G=(V,E)$ is a shift space over $\alf=E$ which consists of all infinite sequences $(e_1,e_2,\ldots)$ of edges of $G$ such that $\ini(e_{i+1})=\term(e_i)$ for each $i \ge 1$.

\subsubsection*{Labelled graphs and coded systems}
A function $\lab\colon E\to \alf_r$ is a \emph{labeling} of edges of $G$ by symbols from
the alphabet $\alf_r$.  We call the pair $(G,\lab)$ a \emph{labelled graph}. A labelled graph $(G,\lab)$ is \emph{right-resolving}, if, for each vertex $v$ of $G$, the edges starting at $v$ carry different labels.
Given a path $\pi=(e_1,e_2,\ldots,e_k)$ on $G$ we define a map which we also denote by $\lab$ by
$\lab(\pi)=\lab(e_1)\ldots\lab(e_k)\in\words$. The word $\lab(\pi)$ is the \emph{label} of a path $\pi$ induced by $\lab$. It is well known (see \cite{LM}) that the set of labels of paths on an irreducible right-resolving graph $G=(V,E)$ induced by $\lab\colon E\to\alf_r$ defines a language of a shift space $X_{(G,\lab)}\subset \Omega_r$. We say that the shift space $X_{(G,\lab)}$ is \emph{presented} by $(G,\lab)$.
A shift space that can be presented by an irreducible right-resolving labelled graph is called a \emph{coded system} (see \cite{BH}, \cite[pp. 450--2]{LM}). Observe that a label $w=\lab(\pi)$ of any loop $\pi=(e_1,e_2,\ldots,e_k)$ on $G$ gives us a periodic point $x=w^\infty$ of $X_{(G,\lab)}$. We say that $\pi$ \emph{presents} $x$. Let $\Per(G,\lab)\subset \Per(X_{(G,\lab)})$ be the set of all periodic points of $X_{(G,\lab)}$ presented by some loops on $G$. Further information can be found in \cite{BH,FF,Pe}.

Finally we recall that a shift space presented by an irreducible labelled graph is always transitive.

%

\medskip
Among our motivating examples are $\beta$-shifts
and $S$-gap shifts. Generically, they do not have the specification property. They belong to the class of coded systems.

\subsection{$S$-gap shifts}\label{sec:S-gap}

Let $S\subset \N$. The \emph{$S$-gap shift} $X_S$ is the set of all binary sequences such that between any two successive $1$'s,
the number of $0$'s is an integer from $S$. Then $X_S$ is a shift space over $\{0,1\}$ as one can take for the set of forbidden words the collection $\{10^n1\,\colon \,n\notin S \}$. These shift spaces were introduced by Dinaburg in \cite{Dinaburg}. 
Let us order the elements in $S$ and write $S=\{n_1,n_2,\ldots\}$ with $n_i<n_{i+1}$ for $i<\lvert S\rvert$ ($\lvert S\rvert$ may be finite or infinite here). It is easy to prove (see \cite[Example 3.4]{UJ}) that $X_S$ has specification property if, and only if, $\sup_i(n_{i+1}-n_i)<\infty$ and
$\gcd\{n+1\colon n\in S\}=1$. Note that $X_S$ is topologically mixing if, and only if, $\gcd\{n+1\colon n\in S\}=1$. Every $S$-gap shift is coded. To see this let us denote by $\Gamma_S=(V,E_S)$
a directed graph, whose vertices are $v_0,v_1,v_2,\ldots$ and
there is an edge $v_i\to v_j$ in $E_S$ if, and only if, $j=i+1$ or $i\in S$ and $j=0$ (see Figure \ref{fig:gamma}). We label an edge $v_i\to v_j$ with $0$ is $i<j$ and by $1$ otherwise. We denote this labeling of $\Gamma_S$ by $\lab_S$. It is easy to see that this defines a presentation of  $X_S$.

\begin{figure}
\includegraphics[scale=0.995]{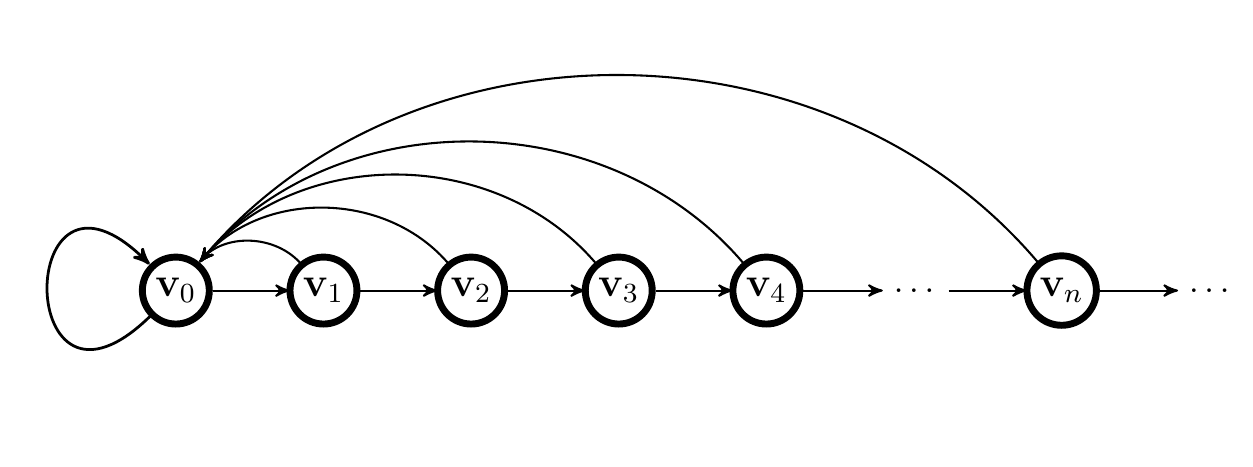}
\caption{Graph $\Gamma_S$ for $S=\N_0$.}
\label{fig:gamma}
\end{figure}

\subsection{$\beta$-shifts}\label{sec:betashift}

Fix $\beta>1$. A \emph{$\beta$-representation} of a number $x\in [0,1]$ is a sequence
$(b_j)_{j=1}^\infty$ with $b_j\in\{0,1,\ldots,\lfloor\beta\rfloor\}$ for every $j\ge 1$ such that
\[
x=\sum_{j=1}^\infty\frac{b_j}{\beta^j}.
\]
This notion was introduced by R\'{e}nyi in \cite{R}.
Given $\beta>1$, a real number $x$ may have many different $\beta$-representations. There is an algorithm called \emph{greedy}, which produces a unique $\beta$-representation~\cite{R}. This special representation is called the \emph{$\beta$-expansion} of $x$. The $j$th ``digit'' of the $\beta$-expansion of $x$ is given by
\[
b_j=\lfloor \beta\cdot T_\beta^{j-1}(x)\rfloor, \quad\text{ where}\quad T_\beta(x)=\beta x -\lfloor\beta x\rfloor =\beta x\mod 1.
\]
Let $d_\beta=(d_j)_{j=1}^\infty$ be the $\beta$-expansion of $1$.
The sequence $d_\beta$ has the following property: if $\sigma$ is the shift on $\N_0^\N$ and $\preceq$ is the lexicographic ordering on $\N_0^\N$, then
\[
\sigma^k(d_\beta)\preceq d_\beta,\quad\text{for all }k\in\N.
\]
By Parry~\cite{Parry} the above condition characterizes sequences $x=(x_i)_{i=1}^\infty$ in $\N_0^\N$ for which there exists a $\beta>1$ such that $x$ is the $\beta$-expansions of $1$.

We say that the $\beta$-expansion $d_\beta$ is \emph{finite} if it ends with an infinite string of $0$'s. In this case we define $\widehat{d}_\beta$ to be
\[
\bar w = d_1d_2\ldots d_{k-1} (d_k-1),\quad \text{and}\quad
\widehat{d}_\beta=\bar{w}^\infty=(d_1d_2\ldots d_{k-1} (d_k-1))^\infty.
\]
Otherwise, if $d_\beta$ does not end with an infinite string of $0$'s, we define $\widehat{d}_\beta=d_\beta$.
In the first case the sequence $\widehat{d}_\beta$ is a $\beta$-representation of $1$, which is a sort of ``improper'' $\beta$-representation, just as $0.999\ldots=1$ in base $10$.

By an abuse of notation we will denote the coordinates of $\widehat{d}_\beta$ by $d_1,d_2,\ldots$. A necessary and sufficient condition for a sequence $b\in\N_0^\N$ to be a $\beta$-expansion of some $x\in [0,1)$ is that
\begin{equation}\label{beta-condition}
\sigma^k(b)\prec \widehat{d}_\beta\quad\text{ for all }k\in\N_0,
\end{equation}
that is, any shift of $b$ is lexicographically strictly less than $\widehat{d}_\beta$. The \emph{$\beta$-shift $X_\beta$} is the closure of the collection of $\beta$-expansion of points in $[0,1)$. By the above results it is easy to see that $X_\beta$ is presented by the graph $\Gamma_\beta=(V,E_\beta,\lab_\beta)$ depicted on Figure \ref{fig:gamma-dois} and described below.
In particular, every $\beta$-shift is a coded system. For more information about $\beta$-shifts  we refer the reader to  \cite{Blanchard, Parry, PS1, Thompson, Th-beta}, among others.

Let $\widehat{d}_\beta= (d_j)_{j=1}^\infty$ 
 be the improper $\beta$-expansion of $1$ described above.
By $\Gamma_\beta=(G_\beta,\lab_\beta)$ we denote a directed labelled graph, where $G_\beta$ is a graph whose vertices are $v_0,v_1,v_2,\ldots$, and its edges and their labels are defined by the following rules:
\begin{enumerate}
  \item for each $i=0,1,2,\ldots$ there is an edge $v_i\to v_{i+1}$ labelled by $d_{i+1}$,
  \item if $d_{i+1}>0$, then there are $d_{i+1}$ edges from $v_i$ to $v_0$ labelled by $0,1,\ldots,d_{i+1}-1$.
\end{enumerate}
\begin{figure}
\includegraphics[scale=0.93]{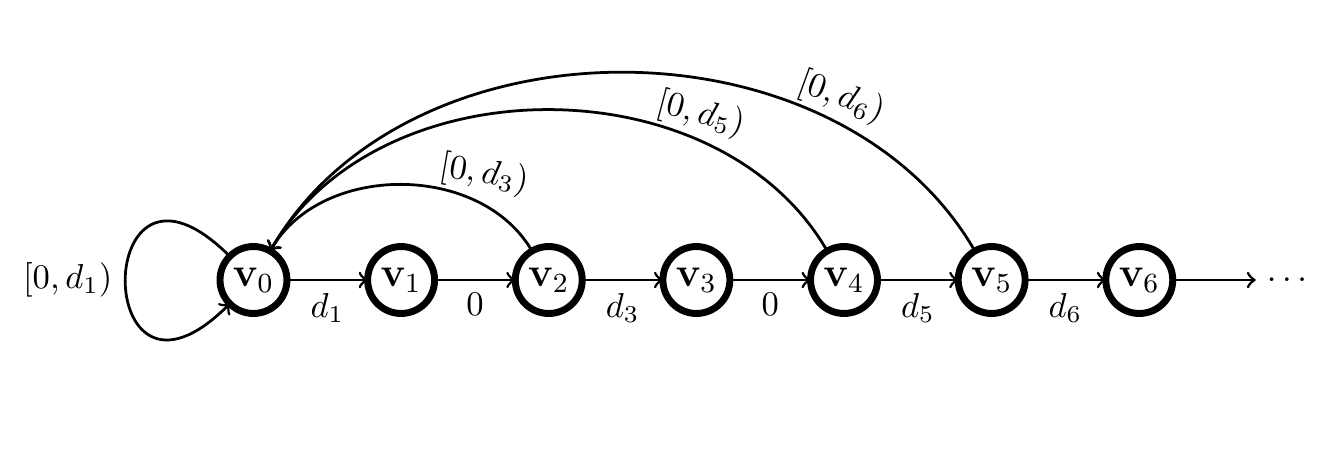}
\caption{Graph $\Gamma_\beta=(V,E_\beta)$ and its labeling $\lab_\beta$ presenting a  $\beta$-shift (by convention: an edge labelled by $[0,d_j)$ means $d_j$ edges labelled by $0,1,\ldots,d_j-1$).}
\label{fig:gamma-dois}
\end{figure}

Note that condition \ref{beta-condition} guarantees that for every word $w$ in a language of $X_\beta$ there is a path in $G_\beta$ starting at $v_0$ and labelled by $w$.

\begin{lemma}[{\cite[\S 1.2]{Buzzi}}]\label{lem:buzzi}
	The set of $\beta>1$ such that the $\beta$-shift $X_\beta$ has the periodic specification property is dense in $(1,\infty)$ but its Lebesgue measure is zero.
\end{lemma}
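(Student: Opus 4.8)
The plan is to reduce the lemma to a combinatorial description of the set of parameters in question, using the presentation of $X_\beta$ by the labelled graph $\Gamma_\beta=(G_\beta,\lab_\beta)$ of Figure~\ref{fig:gamma-dois}: writing $(d_j)_{j\ge 1}=\widehat{d}_\beta$ in the convention fixed above, the only edges leaving a vertex $v_i$ are $v_i\to v_{i+1}$ with label $d_{i+1}$ and, when $d_{i+1}>0$, the edges $v_i\to v_0$ with labels $0,1,\dots,d_{i+1}-1$. The key claim is
\[
X_\beta\text{ has the periodic specification property}\iff \widehat{d}_\beta\text{ contains no block }0^{N}\text{ for some }N\in\N ,
\]
and the two halves of the lemma follow from this together with two classical facts about $\beta$-expansions.

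To prove ``$\Leftarrow$'': by \eqref{beta-condition} every $w\in\lang(X_\beta)$ is the label of a path starting at $v_0$, and since $G_\beta$ presents $X_\beta$, the label of any path in $G_\beta$ lies in $\lang(X_\beta)$. If $\widehat{d}_\beta$ has no block $0^{N}$, then from the terminal vertex of any such path one can, following the ``spine'' $v_j\to v_{j+1}\to\cdots$ for at most $N$ steps, reach a vertex from which one may jump back to $v_0$, producing a ``connecting'' path of length $\le N$. Given an $N'(\eps)$-spaced specification, where $N'(\eps):=M(\eps)+N$ and $M(\eps)\approx\log_2(1/\eps)$ is the number of coordinates on which agreement forces $\eps$-closeness, I would place on each block the word read off the corresponding orbit segment (prolonged by $M(\eps)$ coordinates), glue consecutive blocks with a connecting path, and finally close the construction into a loop at $v_0$ (padding with $0$'s so that the period is exactly $\max I+N'(\eps)$); by the previous observations this loop labels a periodic point of $X_\beta$ which $\eps$-traces the specification, so the periodic specification property holds with gap $N'(\eps)$. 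For ``$\Rightarrow$'': reading $d_1d_2\cdots d_k$ from $v_0$ forces the path $v_0\to v_1\to\cdots\to v_k$ (every other edge out of $v_i$ carries a label $<d_{i+1}$), and when $d_{i+1}=0$ the only edge out of $v_i$ is the spine edge labelled $0$. Hence, if $\widehat{d}_\beta$ has arbitrarily long zero blocks, fix $m$ with $d_{m+1}=\dots=d_{m+N}=0$ for $N>N(\eps)$; the orbit segment $i\mapsto\sigma^{i}(\widehat{d}_\beta)$, $0\le i<m$, is a specification that every $\eps$-tracing $z\in X_\beta$ (with $\eps<1/2$) must match on its first $m$ coordinates, forcing $z_{m+1}=\dots=z_{m+N}=0$; but a second copy of this segment placed $\le N(\eps)$ later would require the symbol $d_1=\lfloor\beta\rfloor\ge 1$ at one of those coordinates, a contradiction. (In this case $d_\beta$ is infinite, so $\widehat{d}_\beta=d_\beta$.)

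For the density half, I would invoke the classical fact that $\{\beta>1:d_\beta\text{ is finite}\}$ is dense in $(1,\infty)$. One way to see it: if $d_{\beta_0}=(d_j)$ is infinite then $d_k\ge1$ for infinitely many $k$, and for each such $k\ge 2$ the word $d_1\cdots d_k0^{\infty}$ is admissible in Parry's sense --- a failure of strictness at a shift $\sigma^{j}$, $1\le j\le k-1$, would force $d_{k-j+1}\cdots d_k=0^{j}$, impossible since $d_k\ge1$ --- so the root $\beta_k>1$ of $x^{k}=d_1x^{k-1}+\cdots+d_k$ satisfies $d_{\beta_k}=d_1\cdots d_k0^{\infty}$, and $\beta_k\nearrow\beta_0$ as $k\to\infty$ through such $k$. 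When $d_\beta$ is finite one has $\widehat{d}_\beta=(d_1\cdots d_{k-1}(d_k-1))^{\infty}$ (or $(d_1-1)^{\infty}$ if $k=1$), a periodic sequence with leading digit $\ge1$, so it has bounded zero blocks and $X_\beta$ has the periodic specification property by the claim.

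For the measure-zero half, note first that the (countable) set of $\beta$ with eventually periodic $d_\beta$ is Lebesgue null, so for a.e.\ $\beta$ we have $\widehat{d}_\beta=d_\beta$; by the claim it therefore suffices to show that \emph{for Lebesgue-a.e.\ $\beta>1$, $d_\beta$ contains arbitrarily long blocks of $0$'s}. Since $d_{n+1}=\dots=d_{n+N}=0$ holds precisely when $T_\beta^{n}(1)<\beta^{-N}$, this is equivalent to $\liminf_{n\to\infty}T_\beta^{n}(1)=0$ for a.e.\ $\beta$, which in turn follows from the theorem that the $T_\beta$-orbit of $1$ is dense in $[0,1]$ for Lebesgue-a.e.\ $\beta$ (see \cite[\S1.2]{Buzzi}, and \cite{Parry,Blanchard}). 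I expect this last input to be the main obstacle: it rests on a parameter-exclusion argument of Borel--Cantelli type exploiting that $\beta\mapsto T_\beta^{n}(1)$ is piecewise expanding in $\beta$ (the naive first-moment bound on $\{\beta:T_\beta^{n}(1)<\beta^{-N}\}$ is not summable in $n$), whereas all the remaining steps are elementary bookkeeping with the graph $\Gamma_\beta$.
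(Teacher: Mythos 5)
The paper does not actually prove this lemma: it is quoted wholesale from Buzzi (ultimately resting on Schmeling's theorem on the parameter set of $\beta$-shifts with specification), so any honest comparison is between your sketch and the literature the citation points to. Your route is the standard one and is essentially correct: the combinatorial characterization ``$X_\beta$ has periodic specification iff $\widehat{d}_\beta$ has bounded blocks of $0$'s'' is right, and both directions of your argument work --- the gluing of words readable from $v_0$ in $\Gamma_\beta$ with bounded connecting paths and $0$-padding at $v_0$ (the self-loop labelled $0$ exists since $d_1\ge 1$), and the forcing of a long block of $0$'s in any point agreeing with $d_1\cdots d_m$, which follows either from your forced-path argument in $\Gamma_\beta$ or directly from the lexicographic condition \eqref{beta-condition}; the only blemishes are bookkeeping (your gap $N'(\eps)=M(\eps)+N$ should absorb the extra edge returning to $v_0$, so $M(\eps)+N+1$ is safer). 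The density half via Parry truncation at a nonzero digit is the classical argument and your admissibility check is correct. The one genuine load-bearing ingredient you do not prove is the measure-zero half: reducing it to ``$\liminf_n T_\beta^n(1)=0$ for Lebesgue-a.e.\ $\beta$'' is fine, but that statement is exactly the nontrivial parameter-space result (Schmeling; quoted in \cite[\S 1.2]{Buzzi}), and you are right that a naive Borel--Cantelli estimate does not suffice. Since the paper itself imports the whole lemma from \cite{Buzzi}, citing that single input while supplying the rest of the argument is a legitimate --- indeed more informative --- treatment than the paper's.
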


\section{Closeability and approximability of ergodic measures}\label{sec:closability}

We now introduce the concept of \emph{closeability} for points, invariant measures and dynamical systems. We are inspired by the notion of a \emph{well closeable point} from \cite[Definition 4.3]{ABC}.  Closeability of a dynamical system is a weak version of the celebrated Closing Lemma. Roughly speaking, the Closing Lemma states that every piece of orbit, which comes back close to its initial point is in fact close to an periodic orbit (this holds for example for hyperbolic maps~\cite{KH:95} and for some geodesic flows on the unit tangent bundle of a (non-necessarily compact) negatively curved manifold \cite{CS}).

\begin{definition}\label{def:closeable}
Given a dynamical system $(X,T)$ an $(n,\eps)$-\emph{dynamical ball} (or \emph{Bowen ball}) around a point  $x\in X$ is the set
\[
\db(x,n,\eps)=\{y\in X\colon \rho(T^j(y),T^j(x))<\eps\text{ for }j=0,1,\ldots,n-1\}.
\]
A point $x\in X$ is \emph{closeable with respect to $K\subset \Per(T)$} or simply \emph{$K$-closeable} if for every $\eps>0$ and $N>0$ there exist
integers $p=p(x,\eps,N)$ and $q=q(x,\eps,N)$ such that
there is a point $y\in \db(x,p,\eps)\cap K$ satisfying $T^q(y)=y$ and $N\le p \le q \le (1+\eps) p$.
\end{definition}

Note that a point $x$ is $K$-closeable if for infinitely many $n$'s
the initial segment $x,T(x),\ldots,T^{n-1}(x)$ of the orbit of $x$ can be
 closed in $K$, that is, there exists a
point $y\in K$ with minimal period only slightly greater than $n$, whose orbit ``shadows''
the orbit of $x$ up to the time $n$.
It follows from the next lemma that the $n$-th empirical measure along the orbit of $x$ is close to the ergodic measure concentrated on the orbit of $y$.
\begin{lemma}\label{lem:close}
Let $(X,T)$ be a dynamical system.
Given $x\in X$, $\eps>0$, and $p,q\in\N$ satisfying $p\le q\le (1+\eps)p$, for every $y\in \db(x,p,\eps)$ we have $\D(\Emp(y,q),\Emp(x,p))\le \eps$.
\end{lemma}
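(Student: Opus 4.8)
The plan is to compare the two empirical measures by decomposing the longer orbit segment (of length $q$) into the part that shadows $x$ (the first $p$ iterates) and the overflow part (the remaining $q-p$ iterates), and to control each contribution separately. First I would apply Lemma~\ref{lem:aux}\eqref{lem:convex-comb} with $m=p$ and $n=q-p$: writing $z=T^p(y)$, this gives
\[
	\D\Big(\Emp(y,q),\tfrac{p}{q}\,\Emp(y,p)+\tfrac{q-p}{q}\,\Emp(z,q-p)\Big)\le\max\{d_1,d_2\},
\]
where $d_1=\D(\Emp(y,p),\Emp(y,p))=0$ and $d_2=\D(\Emp(z,q-p),\Emp(z,q-p))=0$, so in fact the left-hand side is $0$ and $\Emp(y,q)=\tfrac{p}{q}\Emp(y,p)+\tfrac{q-p}{q}\Emp(z,q-p)$ exactly. (Strictly speaking one only needs the trivial identity $\Emp(y,q)(A)=\tfrac{p}{q}\Emp(y,p)(A)+\tfrac{q-p}{q}\Emp(T^p(y),q-p)(A)$, which holds by counting visits.)

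Next I would estimate $\D(\Emp(y,q),\Emp(y,p))$. By Lemma~\ref{lem:aux}\eqref{lem:uniform-close} (or directly \eqref{lem:affine-close} applied to the convex combination above, comparing the weights $p/q$ and $1$), the distance from $\Emp(y,q)$ to $\Emp(y,p)$ is at most $1-\tfrac{p}{q}=\tfrac{q-p}{q}$. Since $p\le q\le(1+\eps)p$ we have $q-p\le\eps p\le\eps q$, hence
\[
	\D\big(\Emp(y,q),\Emp(y,p)\big)\le\frac{q-p}{q}\le\eps.
\]
Finally, because $y\in\db(x,p,\eps)$ means $\rho(T^j(y),T^j(x))<\eps$ for $j=0,\dots,p-1$, the empirical measures $\Emp(y,p)$ and $\Emp(x,p)$ are sums of $p$ Dirac masses at points that are pairwise $\eps$-close, so $\D(\Emp(y,p),\Emp(x,p))\le\eps$; indeed for any Borel $A$ we have $\{0\le j<p:T^j(y)\in A\}\subset\{0\le j<p:T^j(x)\in A^\eps\}$, giving $\Emp(y,p)(A)\le\Emp(x,p)(A^\eps)$, which is even stronger than what is needed. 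Combining the two bounds via the triangle inequality for $\D$ yields $\D(\Emp(y,q),\Emp(x,p))\le\eps$ — though a small bookkeeping point is that naively adding gives $2\eps$; to get the clean bound $\eps$ one observes that the Prokhorov-type metric $\D$ satisfies $\D(\tfrac{p}{q}\mu+\tfrac{q-p}{q}\nu,\mu')\le\max\{\tfrac{q-p}{q},\D(\mu,\mu')\}$ when $\nu$ is arbitrary, applied with $\mu=\Emp(y,p)$, $\nu=\Emp(T^p(y),q-p)$, $\mu'=\Emp(x,p)$, so both terms in the max are $\le\eps$.

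I expect the only real subtlety to be this last coalescing of the two error sources into a single $\eps$ rather than $2\eps$: one must use the specific structure of $\D$ (that it is a Prokhorov-type metric on a probability simplex, so mixing in extra mass of total weight $\le\eps$ costs at most $\eps$, not adding to a pre-existing error) rather than treating $\D$ as an abstract metric and invoking the triangle inequality blindly. Everything else is a routine counting argument together with the auxiliary estimates already recorded in Lemma~\ref{lem:aux}. I would phrase the final step as a direct verification of the defining inequality for $\D$: for every Borel set $A$, $\Emp(y,q)(A)=\tfrac{p}{q}\Emp(y,p)(A)+\tfrac{q-p}{q}\Emp(T^p(y),q-p)(A)\le\Emp(x,p)(A^\eps)+\eps$, which is exactly $\D(\Emp(y,q),\Emp(x,p))\le\eps$.
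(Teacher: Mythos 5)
Your proposal is correct and, in its final form, is essentially the paper's proof: the paper likewise verifies the defining inequality $\Emp(y,q)(A)\le\Emp(x,p)(A^{\eps})+\eps$ directly for every Borel set $A$, by splitting the visit count over $\{0,\dots,q-1\}$ into the first $p$ iterates (where $T^j(y)\in A$ forces $T^j(x)\in A^{\eps}$) and the remaining $q-p\le\eps p$ iterates (contributing at most $\eps$ after replacing $1/q$ by $1/p$). Your preliminary detour through Lemma~\ref{lem:aux} and the $2\eps$ bookkeeping is unnecessary, but your closing paragraph is exactly the argument in the text.
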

\begin{proof}
Take any Borel set $A\subset X$. For every $y\in B(x,p,\varepsilon)$
we have
\[\begin{split}
\Emp(y,q)(A)&=
\frac{1}{q}\left\lvert \{ 0\le j < q \colon T^j(y)\in A\}\right\rvert\\
&\le\frac{1}{p}\left\lvert\{0\le j <p \colon T^j(x)\in A^{\eps}\}\right\rvert
	+\frac{1}{p}\left\lvert\{p\le j < q \colon T^j(y)\in A\}\right\rvert\\
&\le \Emp(x,p)(A^{\eps})+\eps,
\end{split}\]
because $T^j(y)\in A$ implies $T^j(x)\in A^{\eps}$ for $j=0,1,\ldots,p-1$.
Therefore
\[
\Emp(y,q)(A)\le \Emp(x,p) (A^{\eps}) +\eps.
\]
Hence, $\D(\Emp(y,q),\Emp(x,p))\le \eps$.
\end{proof}
We note the following special case of Lemma \ref{lem:close} for further reference.
\begin{remark}\label{closeable-close}
Let $x\in X$ be $K$-closeable. If $y\in K$ and $p,q\in\N$ are chosen for some $N\in\N$ and $\eps>0$ as in Definition \ref{def:closeable}, then
\[
\gamma(y)=\Emp(y,q)\quad\text{and}\quad\D(\gamma(y),\Emp(x,p))<\eps.
\]
\end{remark}

An immediate consequence of definitions given above is the following fact. We do not know whether we can dispense with the uniform continuity assumption.

\begin{lemma}\label{lem:close-conj1}
Let $(X,T)$ be a dynamical system and $(Y,S)$ be its factor through uniformly continuous semiconjugacy $\Phi\colon X\to Y$.
If $x\in X$ is $K$-closeable, then $\Phi(x)$ is $\Phi(K)$-closeable.
\end{lemma}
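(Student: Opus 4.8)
The plan is to unwind the definition of $K$-closeability for $x$ and push the relevant objects through $\Phi$, checking that the required inequalities survive. Fix $\eps>0$ and $N>0$; we must produce $p',q'\in\N$ and a point $y'\in B_S(\Phi(x),p',\eps)\cap\Phi(K)$ with $S^{q'}(y')=y'$ and $N\le p'\le q'\le(1+\eps)p'$. Since $\Phi$ is uniformly continuous, choose $\delta\in(0,\eps)$ so that $\rho_X(a,b)<\delta$ implies $\rho_Y(\Phi(a),\Phi(b))<\eps$. Apply $K$-closeability of $x$ with this $\delta$ and with the same $N$: we get integers $p=p(x,\delta,N)$, $q=q(x,\delta,N)$ and a point $y\in B_T(x,p,\delta)\cap K$ with $T^q(y)=y$ and $N\le p\le q\le(1+\delta)p\le(1+\eps)p$. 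Now set $p'=p$, $q'=q$ and $y'=\Phi(y)\in\Phi(K)$.

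The key step is to verify $y'\in B_S(\Phi(x),p,\eps)$ and $S^{q}(y')=y'$. For the first: for each $j=0,1,\ldots,p-1$ we have $\rho_X(T^j(y),T^j(x))<\delta$, hence by the choice of $\delta$ and the intertwining relation $\Phi\circ T=S\circ\Phi$ (so that $S^j(\Phi(y))=\Phi(T^j(y))$ and likewise for $x$), we get $\rho_Y(S^j(\Phi(y)),S^j(\Phi(x)))=\rho_Y(\Phi(T^j(y)),\Phi(T^j(x)))<\eps$; this is exactly $\Phi(y)\in B_S(\Phi(x),p,\eps)$. For the periodicity: $S^{q}(\Phi(y))=\Phi(T^{q}(y))=\Phi(y)$, again using the semiconjugacy relation. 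Since the period and spacing inequalities $N\le p\le q\le(1+\eps)p$ are inherited verbatim from those for $x$, the point $\Phi(x)$ satisfies Definition \ref{def:closeable} with respect to $\Phi(K)$, i.e.\ $\Phi(x)$ is $\Phi(K)$-closeable.

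There is essentially no hard step here; the only subtlety — and the reason the uniform continuity hypothesis appears — is that $K$-closeability demands a \emph{single} $\delta$ that works along the whole initial orbit segment $T^0(x),\ldots,T^{p-1}(x)$ simultaneously, and ordinary (pointwise) continuity of $\Phi$ would only give moduli depending on each point $T^j(x)$, which cannot be combined into one usable modulus independent of the (a priori unbounded) length $p$. Uniform continuity removes this obstruction cleanly. One should also note that $\Phi(K)\subset\Per(S)$ automatically, since $T^k(y)=y$ forces $S^k(\Phi(y))=\Phi(T^k(y))=\Phi(y)$, so $\Phi(K)$ is a legitimate set of periodic points to which the definition applies.
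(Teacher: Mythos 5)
Your argument is correct and is precisely the routine unwinding of Definition \ref{def:closeable} that the paper has in mind when it calls this lemma an immediate consequence of the definitions (no written proof is given there). Your remark on why uniform (rather than pointwise) continuity is needed also matches the authors' implicit concern, since they explicitly say they do not know whether that hypothesis can be dropped.
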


Now we define closeability for invariant measures and dynamical systems.

\begin{definition}
A measure $\mu\in\MT(X)$ is \emph{$K$-closeable} if some generic point of $\mu$ is $K$-closeable. A dynamical system $(X,T)$ has the
\emph{$K$-closeability} property if
every ergodic measure $\mu\in\MTe(X)$ is $K$-closeable. A measure $\mu\in\MT(X)$ is \emph{$K$-approximable} if it belongs to the closure
of a set of CO-measures supported on orbits of points from $K$.

We simply say that a point (a measure, a dynamical system) is \emph{closeable} if it is $\Per(T)$-closeable.
\end{definition}

Note that an invariant measure (which may have many generic points)
is $K$-closeable if it has at least one $K$-closeable generic point. On the other hand $K$-closeability for a dynamical system means that all ergodic measures are closeable.  Closeability of a dynamical system $(X,T)$ with respect to a set $K\subset\Per(T)$  implies that all ergodic measures are $K$-approximable, that is, the CO-measures supported on orbits of points from $K$ are dense in $\MTe(X)$.

\begin{lemma}
Let $(X,T)$ be a dynamical system and $(Y,S)$ be its factor through uniformly continuous semiconjugacy $\Phi\colon X\to Y$.
If $\mu\in \MT(X)$ is $K$-closeable, then its push-forward, $\Phi_\ast(\mu)\in\MS(Y)$, is $\Phi(K)$-closeable.
In particular, if $(X,T)$ is $K$-closeable, then $(Y,S)$ is $\Phi(K)$-closeable.
\end{lemma}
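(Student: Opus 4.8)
The plan is to reduce everything to Lemma~\ref{lem:close-conj1} together with the elementary fact that the push-forward operation $\Phi_\ast$ intertwines empirical measures and is weak$\ast$-continuous. First I would record, for every $x\in X$ and $n\ge 1$, the identity $\Emp(\Phi(x),n)=\Phi_\ast\big(\Emp(x,n)\big)$. This is immediate from $S\circ\Phi=\Phi\circ T$, which gives $S^j(\Phi(x))=\Phi(T^j(x))$, so that for every Borel set $B\subset Y$
\[
\Emp(\Phi(x),n)(B)=\tfrac1n\#\{0\le j<n\colon \Phi(T^j(x))\in B\}=\Emp(x,n)(\Phi^{-1}B)=\Phi_\ast(\Emp(x,n))(B).
\]
Since $\Phi$ is continuous, the induced map $\Phi_\ast\colon\M(X)\to\M(Y)$ is weak$\ast$-continuous. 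Now if $\mu\in\MT(X)$ is $K$-closeable, pick a $K$-closeable generic point $x$ of $\mu$, so $\Emp(x,n)\to\mu$; applying $\Phi_\ast$ and the displayed identity, $\Emp(\Phi(x),n)=\Phi_\ast(\Emp(x,n))\to\Phi_\ast\mu$, so $\Phi(x)$ is a generic point of $\Phi_\ast\mu$ (which lies in $\MS(Y)$ because $S_\ast\Phi_\ast\mu=\Phi_\ast T_\ast\mu=\Phi_\ast\mu$). By equivariance $\Phi(K)\subset\Per(S)$, and Lemma~\ref{lem:close-conj1} (which uses that $\Phi$ is uniformly continuous) shows $\Phi(x)$ is $\Phi(K)$-closeable. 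Hence $\Phi_\ast\mu$ has a $\Phi(K)$-closeable generic point, i.e.\ it is $\Phi(K)$-closeable.

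For the ``in particular'' statement, assume $(X,T)$ is $K$-closeable; I must show every ergodic measure $\nu\in\MS(Y)$ is $\Phi(K)$-closeable, and by the first part it suffices to produce an ergodic $\mu\in\MTe(X)$ with $\Phi_\ast\mu=\nu$. I would first lift $\nu$ to an invariant measure $\mu'\in\MT(X)$ with $\Phi_\ast\mu'=\nu$: take a universally measurable section $s$ of $\Phi$, set $\lambda=s_\ast\nu$, note $\Phi_\ast\lambda=\nu$, and observe that the Ces\`aro averages $\tfrac1n\sum_{j<n}T^j_\ast\lambda$ all project to $\nu$ (as $\nu$ is $S$-invariant), so any weak$\ast$ limit point $\mu'$ is $T$-invariant with $\Phi_\ast\mu'=\nu$. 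Then I would decompose $\mu'=\int\theta\,d\tau(\theta)$ into ergodic components, giving $\nu=\Phi_\ast\mu'=\int\Phi_\ast\theta\,d\tau(\theta)$; since $\nu$ is ergodic, uniqueness of the ergodic decomposition of $\nu$ forces $\Phi_\ast\theta=\nu$ for $\tau$-a.e.\ $\theta$. Choosing such a $\theta=:\mu\in\MTe(X)$, the hypothesis gives that $\mu$ is $K$-closeable, and the first part yields that $\nu=\Phi_\ast\mu$ is $\Phi(K)$-closeable.

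The first paragraph is routine. The only genuine subtlety is the existence of the lift $\mu'$, i.e.\ surjectivity of the induced map $\MT(X)\to\MS(Y)$ in the non-compact Polish setting, where one must argue tightness of the Ces\`aro averages; this is automatic when $X$ is compact and holds in the settings relevant to us, so I would state it as such rather than grinding through the general case.
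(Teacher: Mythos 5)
Your argument for the main statement is essentially the paper's proof: the paper records that $\Phi_\ast$ is continuous and affine, that push-forwards of ergodic measures are ergodic, that $\Phi$ maps generic points to generic points, and then invokes Lemma~\ref{lem:close-conj1}; your identity $\Emp(\Phi(x),n)=\Phi_\ast(\Emp(x,n))$ plus weak$\ast$-continuity of $\Phi_\ast$ is exactly the unwinding of ``generic points go to generic points''. Where you genuinely diverge is the ``in particular'' clause. The paper disposes of it in one stroke by asserting that $\Phi_\ast\colon\MT(X)\to\MS(Y)$ is onto (and even then an ergodic-decomposition step like yours is implicitly needed to choose an \emph{ergodic} preimage of a given ergodic $\nu\in\MS(Y)$). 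You instead build the lift explicitly: a universally measurable section, Ces\`aro averaging, a weak$\ast$ limit point, then ergodic decomposition and extremality of $\nu$ to select an ergodic $\mu$ with $\Phi_\ast\mu=\nu$. Your caution about tightness is warranted rather than pedantic: in the non-compact Polish setting surjectivity of $\Phi_\ast$ can fail outright (take $X=\Z$ with $T(n)=n+1$ collapsed by a constant map onto a fixed point; then $\MT(X)=\emptyset$ while $\MS(Y)\neq\emptyset$, and with $K=\emptyset$ even the ``in particular'' conclusion fails vacuously--hypothetically), so the paper's ``well known \ldots onto'' really does rely on compactness or some extra hypothesis ensuring the averaged lifts have a limit point. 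In short: same route for the first assertion, and for the second a more explicit treatment -- more candid about the hypotheses it needs -- of the lifting step that the paper compresses into a single sentence.
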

\begin{proof}
Recall that $\Phi_\ast\colon\MT(X)\to\MS(Y)$ is given by
\[
\nu=\Phi_\ast(\mu)\text{ if and only if }\nu(B)=\mu(\Phi^{-1}(B))\text{ for any Borel set $B\subset Y$}.
\]
It is well known that $\Phi_\ast$ is continuous, affine, and  onto. Hence the image of an ergodic measure is ergodic and $\Phi$ maps generic points onto generic points. The rest of the proof follows from Lemma \ref{lem:close-conj1}.
\end{proof}

\begin{proposition}\label{prop:spec-closeable}
	For a dynamical system with the periodic specification property every point is closeable (with respect to $\Per(T)$).
\end{proposition}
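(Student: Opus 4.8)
The plan is to verify Definition~\ref{def:closeable} directly, using the periodic specification property to close up a single long orbit segment. Fix a $K$-closeability target: a point $x\in X$, a tolerance $\eps>0$, and a spacing bound $N>0$. We want to produce integers $p,q$ and a periodic point $y\in\Per(T)\cap\db(x,p,\eps)$ with $T^q(y)=y$ and $N\le p\le q\le(1+\eps)p$.

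First I would let $N_0=N(\eps)$ be the spacing constant furnished by the periodic specification property for the scale $\eps$. Then I would choose $p$ large enough that simultaneously $p\ge N$ and $N_0/p\le\eps$ (e.g. $p=\max\{N,\lceil N_0/\eps\rceil\}$ works, and any larger $p$ works too). Consider the single orbit segment of length $p$ along $x$, i.e. the specification $\xi\colon I\to X$ with $I=\{0,1,\ldots,p-1\}$ (so $k=1$, $a_1=0$, $b_1=p-1$) given by $\xi(t)=T^t(x)$; this is trivially $N_0$-spaced since there is only one block. By the periodic specification property there is a point $y\in X$ with $\rho(T^j(y),T^j(x))<\eps$ for $j=0,\ldots,p-1$ — that is, $y\in\db(x,p,\eps)$ — and $T^{q}(y)=y$ with $q=\max I+N_0=(p-1)+N_0$. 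Then $p\le q$ (as $N_0\ge 1$, and if $N_0=0$ one has $q=p-1<p$, so to be safe one takes $q=p-1+\max\{N_0,1\}$ or simply notes $N_0\ge1$ in the nontrivial case) and $q=p-1+N_0\le p+N_0\le p+\eps p=(1+\eps)p$ by the choice of $p$. Finally, since $K=\Per(T)$ here and $y\in\Per(T)$, we have $y\in\db(x,p,\eps)\cap K$, which is exactly what Definition~\ref{def:closeable} requires.

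Since $x$, $\eps$, and $N$ were arbitrary, every point of $X$ is $\Per(T)$-closeable, i.e. closeable. Combined with the definition of the $K$-closeability property of a dynamical system, this also shows a system with the periodic specification property is $\Per(T)$-closeable, but for the proposition the pointwise statement suffices.

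The only real subtlety — hardly an obstacle — is bookkeeping around the degenerate case $N_0=0$ and making sure $p\le q$ holds strictly as required: one wants $q$ at least $p$, and the clean way is to observe that in any interesting system $N(\eps)\ge 1$, or alternatively to replace $N_0$ by $\max\{N(\eps),1\}$ throughout, which only strengthens the spacing requirement and leaves the specification property applicable. Everything else is a direct unwinding of the two definitions, so no estimate here is delicate.
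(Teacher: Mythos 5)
Your proof is correct and follows essentially the same route as the paper: apply the periodic specification property to the single orbit segment $x,T(x),\ldots,T^{p-1}(x)$ with $p$ chosen so large that the spacing constant $M=N(\eps)$ satisfies $M\le\eps p$, obtaining a periodic tracing point of period $q$ with $p\le q\le(1+\eps)p$. Your extra care about the degenerate case $N(\eps)=0$ and the exact value $q=\max I+N(\eps)$ is harmless bookkeeping that the paper glosses over by simply setting $q=p+M$.
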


\begin{proof}
Let $x\in X$. Take any $N\in\N$ and $\eps>0$. Let $M\in\N$ be provided for this $\eps$ by the specification property. Take $p\in\N$ such that $p>N$ and $M< p\eps$. Set $q=p+M$. By the specification property there is a periodic point $z\in X$ such that $z\in\db(x,p,\eps)$ and $T^{p+M}(z)=z$, which completes the proof.
\end{proof}

The following two results show that the converse is not true, because many $\beta$-shifts and $S$-gap shifts do not have the specification property (compare Lemma~\ref{lem:buzzi} and \cite[Example 3.4]{UJ}).

\begin{proposition}\label{prop:beta-closeable}
Every $\beta$-shift $X_\beta$ is closeable with respect to $\Per(G_\beta,\lab_\beta)$.
\end{proposition}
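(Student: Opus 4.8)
The plan is to prove the stronger assertion that \emph{every} point $x\in X_\beta$ is $\Per(G_\beta,\lab_\beta)$-closeable; since this makes every generic point of every ergodic measure closeable, it yields the stated closeability of $(X_\beta,\sigma)$. Fix $x\in X_\beta$, $\eps>0$ and $N\in\N$. Every prefix $x_1\ldots x_n$ of $x$ lies in $\lang(X_\beta)$ and hence labels a (unique, $(G_\beta,\lab_\beta)$ being right-resolving) path from $v_0$ in $G_\beta$; these nested paths determine a single infinite path from $v_0$ reading $x$, and I write $v_{a(n)}$ for the vertex it reaches after $x_1\ldots x_n$, so $a(0)=0$. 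The construction closes a long initial piece by a single edge: whenever $d_{a(n)+1}>0$, the vertex $v_{a(n)}$ has an edge to $v_0$ labelled $0$, so concatenating the path labelled $x_1\ldots x_n$ with that edge produces a loop at $v_0$ of length $n+1$ labelled $x_1\ldots x_n\,0$; hence $y:=(x_1\ldots x_n\,0)^\infty\in\Per(G_\beta,\lab_\beta)$, $\sigma^{n+1}(y)=y$, and $y_i=x_i$ for $1\le i\le n$.

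The heart of the argument is that $d_{a(n)+1}>0$ for infinitely many $n$. I would prove this by contradiction: if $d_{a(n)+1}=0$ for all $n\ge n_0$, then by the description of the edges of $G_\beta$ the only edge leaving $v_{a(n)}$ is $v_{a(n)}\to v_{a(n)+1}$ (labelled $0$), so $a(n+1)=a(n)+1$ for all $n\ge n_0$; thus $a(n)\to\infty$ and $d_i=0$ for every $i>a(n_0)+1$, contradicting the fact that $\widehat d_\beta$ is, by construction, never eventually $0$ (since $d_1=\lfloor\beta\rfloor\ge1$, this is immediate from the two cases defining $\widehat d_\beta$). Granting this, choose $L\in\N$ with $2^{-L}<\eps$ and pick an $n$ as above that is moreover large enough that $n\ge N+L$ and $n+1\le(1+\eps)(n-L)$; set $p=n-L$ and $q=n+1$. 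Then $N\le p\le q\le(1+\eps)p$, and since $y$ agrees with $x$ on its first $n$ coordinates we get $\rho(\sigma^j y,\sigma^j x)\le 2^{-(L+1)}<\eps$ for $0\le j\le p-1$, i.e.\ $y\in\db(x,p,\eps)\cap\Per(G_\beta,\lab_\beta)$. By Definition~\ref{def:closeable} this shows $x$ is $\Per(G_\beta,\lab_\beta)$-closeable.

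I expect the only genuinely delicate step to be the infinitude of the good indices $n$: one must rule out the reading path of $x$ getting permanently ``trapped'' on the spine $v_0\to v_1\to v_2\to\cdots$ with no option of returning to $v_0$, and this is precisely what the non-eventually-zero property of $\widehat d_\beta$ prevents. All the remaining ingredients — existence and uniqueness of the path reading $x$, the one-edge closing, and the elementary estimate converting ``agreement on a long prefix'' into ``membership in the Bowen ball $\db(x,p,\eps)$'' — are routine. One could alternatively split into the case where $x$ has infinitely many nonzero symbols, in which case any $n$ with $x_{n+1}>0$ works since no label out of $v_{a(n)}$ exceeds $d_{a(n)+1}$, and the case where $x$ is eventually $0$, in which case the path eventually remains at $v_0$ where $d_1\ge1$; the contradiction argument above handles both at once.
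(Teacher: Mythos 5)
Your proof is correct, and it takes a somewhat different --- and in fact stronger --- route than the paper's. The paper proves only what the definition of closeability of a \emph{system} requires: it fixes an ergodic measure $\mu$, splits into the cases $\mu=\delta_{0^\infty}$ and $\mu([j])>0$ for some symbol $j>0$, and uses genericity to guarantee that a generic point reads the positive symbol $j$ infinitely often; at each such time the last edge of the reading path is labelled $j>0$, so it is either already a backward edge (hence the path is a loop) or the forward edge with $d_{i+1}=j>0$, in which case it can be replaced by the backward edge labelled $0$. You instead prove that \emph{every} point of $X_\beta$ is $\Per(G_\beta,\lab_\beta)$-closeable, replacing the measure-theoretic recurrence of a positive symbol by the purely combinatorial observation that the reading path of any point must visit a vertex $v_i$ with $d_{i+1}>0$ infinitely often, since otherwise the path would be trapped on the spine and $\widehat{d}_\beta$ would be eventually zero --- impossible because $d_1=\lfloor\beta\rfloor\ge 1$ rules out $\bar w=0\cdots 0$ in the finite case and the infinite case is excluded by definition. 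The closing mechanism (append a backward edge labelled $0$ to get a loop at $v_0$) and the elementary Bowen-ball estimate are essentially the same in both arguments, and your choice of $p=n-L$, $q=n+1$ verifies Definition~\ref{def:closeable} correctly. What your version buys is a statement parallel to Proposition~\ref{prop:spec-closeable} (every point, not merely some generic point of each ergodic measure, is closeable) at essentially no extra cost; the paper's version uses less about the structure of $\widehat{d}_\beta$ but yields only closeability of the system.
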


\begin{proof}
Fix $\beta>1$. Let $\mu$ be an ergodic invariant measure for $X_\beta$. Either $\mu$ is concentrated on the fixed point $0^\infty$, or there exists a symbol $j\in\{1,\ldots,\lfloor\beta\rfloor\}$ such that $\mu([j])>0$. In the former case $\mu$ is clearly closeable as $0^\infty\in\Per(G_\beta,\lab_\beta)$. In the latter case, for every generic point $x=(x_i)_{i=1}^\infty$ of $\mu$ we have $x_i=j$ for infinitely many $i$. For every $k>0$ there exists a path in $G_\beta$ starting at $v_0$ and labelled by $x_1x_2\ldots x_k$. Denote it by $\pi^{(k)}_x=(e^{(k)}_1,\ldots,e^{(k)}_k)$.  Note that for infinitely many $k$ the path $\pi^{(k)}(x)$ ends with an edge labeled by $j>0$. Therefore, either $\pi^{(k)}_x$ is a loop labelled by $w^{(k)}=x_1x_2\ldots x_k$, or we may replace the edge $e^{(k)}_k$ in $\pi^{(k)}_x$ by a backward edge $\bar{e}^{(k)}_k=(v_l\to v_0)$ labelled by $0$ and resulting in a loop $\bar{\pi}^{(k)}_k=(e^{(k)}_1,\ldots,e^{(k)}_{k-1},\bar{e}^{(k)}_k)$ labelled by $w^{(k)}=x_1\ldots x_{k-1}0$. In any case we have found infinitely many $k$ such that $(w^{(k)})^\infty\in\Per(G_\beta,\lab_\beta)$ is a periodic point closing the initial segment of length $k-1$ of the orbit of $x$. It is now easy to check that this implies that $x$ is $\Per(G_\beta,\lab_\beta)$-closeable.
\end{proof}

\begin{proposition}\label{prop:S-gap-closeable}
Every $S$-gap shift $X_S$ is closeable with respect to $\Per(X_S)\setminus\{0^\infty\}$.
\end{proposition}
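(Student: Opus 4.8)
The plan is to follow the scheme of Proposition~\ref{prop:beta-closeable}, with two adjustments dictated by a feature of $S$-gap shifts absent from $\beta$-shifts: a word of $\lang(X_S)$ beginning with a run $0^{c}$ with $c\notin S$ need not label any path from the root $v_0$ of $\Gamma_S$, since only the gaps \emph{between} consecutive $1$'s are constrained. Write $n_1=\min S$ and take $\mu\in\MTe(X_S)$. If $\mu([1])=0$, then $\mu$-a.e.\ point contains no symbol $1$, hence $\mu=\delta_{0^\infty}$; I postpone this degenerate case. Otherwise $\mu([1])>0$, and since the set of generic points of $\mu$ has full $\mu$-measure, I may fix a generic point $x=(x_i)_{i\ge1}$ of $\mu$ with $x_1=1$; then $1$ occurs at infinitely many positions $1=j_1<j_2<\dots$ of $x$, and the gaps $a_l:=j_{l+1}-j_l-1$ lie in $S$, so in particular $a_l\ge n_1$.

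For the main case, fix $\eps\in(0,1)$ and $N\in\N$, pick $M\in\N$ with $2^{-M}<\eps$, and for a large index $i$ set $q:=j_i+n_1$, $w:=x_1x_2\dots x_q$ and $y:=w^\infty$. Since $a_i\ge n_1$ the coordinates $x_{j_i+1},\dots,x_{q}$ all vanish, so $w=1\,0^{a_1}1\cdots 1\,0^{a_{i-1}}1\,0^{n_1}$ is genuinely a prefix of $x$; the $1$-gaps occurring in $y=w^\infty$ are $a_1,\dots,a_{i-1}$ together with the ``seam'' gap $n_1$ (the trailing block $0^{n_1}$ of one copy of $w$ abutting the leading $1$ of the next), all of which belong to $S$. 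Hence $y\in X_S$, $\sigma^{q}y=y$, and $y\neq 0^\infty$, so $y\in\Per(X_S)\setminus\{0^\infty\}$. As $y_l=x_l$ for $1\le l\le q$, for every $0\le t\le q-M$ the sequences $\sigma^{t}y$ and $\sigma^{t}x$ agree on their first $M$ coordinates, whence $\rho(\sigma^{t}y,\sigma^{t}x)<\eps$; with $p:=q-M+1$ this yields $y\in\db(x,p,\eps)$. Since $q-p=M-1$, the inequalities $N\le p\le q\le(1+\eps)p$ reduce to $j_i\ge N+M-1-n_1$ and $\eps p\ge M-1$, both satisfied once $i$ is large (recall $j_i\to\infty$). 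So $x$ is a $(\Per(X_S)\setminus\{0^\infty\})$-closeable generic point of $\mu$.

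For $\mu=\delta_{0^\infty}$ I would use $x=0^\infty$ and the sparse periodic orbits $(0^{a}1)^\infty$: given $\eps\in(0,1)$, $N\in\N$ and $M$ with $2^{-M}<\eps$, choose $a\in S$ with $a\ge\max\{N+M-1,\,M/\eps+M-1\}$ (here one uses that $S$ is unbounded), and set $y:=(0^{a}1)^\infty\in\Per(X_S)\setminus\{0^\infty\}$, $q:=a+1$, $p:=a+1-M$. Then for $0\le t<p$ the first $M$ coordinates of $\sigma^{t}y$ all vanish (as $y_1=\dots=y_a=0$ and $t+M\le p+M-1=a$), so $y\in\db(0^\infty,p,\eps)$, $\sigma^{q}y=y$, and $N\le p\le q\le(1+\eps)p$, as required.

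I expect the only genuine difficulty to be exactly the point that distinguishes this from the $\beta$-shift case, namely the boundary effects of the $S$-gap condition: neither an over-long initial $0$-run of $x$ nor the concatenation seam of $w^\infty$ is automatically an admissible gap, so one cannot simply read $x_1\dots x_k$ along a loop of $\Gamma_S$ as in Proposition~\ref{prop:beta-closeable}. The two fixes above resolve this — taking a generic point with $x_1=1$ eliminates the initial run, and appending $0^{n_1}$ forces the seam gap to equal $\min S\in S$ — and the measure $\delta_{0^\infty}$, whose only generic points are eventually zero, must be closed ``from afar'' by the orbits $(0^{a}1)^\infty$ with large $a\in S$, which is where (and the mild reason why) the unboundedness of $S$ is used.
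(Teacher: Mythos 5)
Your proof is correct and follows essentially the same route as the paper's: take a generic point beginning with $1$ and close it by periodizing longer and longer prefixes, treating $\delta_{0^\infty}$ separately via the orbits $(0^a1)^\infty$ with $a\in S$ large; the only difference is that the paper cuts the prefix as $x_1\ldots x_{i-1}$ with $x_i=1$, so the seam gap of the periodized word is itself a gap of $x$ and hence automatically lies in $S$, which makes your appended block $0^{\min S}$ unnecessary (though harmless). Your explicit appeal to the unboundedness of $S$ in the $\delta_{0^\infty}$ case matches the paper's own step asserting that in this case ``$S$ must contain a strictly increasing sequence of integers'', so on that point your argument is exactly as (indeed more transparently) careful as the published one.
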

\begin{proof}
Fix $S\subset\N$. Let $\mu$ be an ergodic invariant measure for $X_S$. Take any point $x=(x_i)_{i=1}^\infty$ generic for $\mu$.  Either $\mu$ is concentrated on the fixed point $0^\infty$, or we have $x_i=1$ for infinitely many $i$. In the former case $S$ must contain a strictly increasing sequence of integers, therefore $(0^k1)^\infty\in X_S$ for infinitely many $k$ and $x$ is clearly closeable. In the latter case, without loss of generality we may assume that $x_1=1$. Now for every  $i$ such that $x_i=1$ there is a periodic point $z_i=(x_1\ldots x_{i-1})^\infty$ in $X_S$. Clearly, the orbit of this point approximates the initial segment of length $i-1$ of the orbit of $x$. It is now easy to check that this implies that $x$ closeable.
\end{proof}

We can now state and prove the main result of this section.
\begin{theorem}\label{thm:closeable}
Let $K\subset \Per(T)$.
If a dynamical system $(X,T)$ is $K$-closeable, then the set
$\MTp(K)$ is dense in $\MTe(X)$.
\end{theorem}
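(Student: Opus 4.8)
The plan is to show that an arbitrary ergodic measure $\mu\in\MTe(X)$ lies in the closure of $\MTp(K)$. By the definition of $K$-closeability of the system, $\mu$ has a generic point $x\in X$ which is $K$-closeable. So it suffices to approximate $\mu$ by CO-measures on orbits of points of $K$ using only the data guaranteed by Definition~\ref{def:closeable} applied to $x$.

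First I would fix $\eps>0$ and aim to produce $y\in K$ with $\D(\gamma(y),\mu)<3\eps$ (or some fixed multiple of $\eps$). Since $x$ is generic for $\mu$, choose $N$ so large that $\D(\Emp(x,n),\mu)<\eps$ for all $n\ge N$ — this is just weak$\ast$ convergence of empirical measures. Apply $K$-closeability of $x$ with this $\eps$ and this $N$: we obtain $p,q\in\N$ with $N\le p\le q\le(1+\eps)p$ and a point $y\in \db(x,p,\eps)\cap K$ with $T^q(y)=y$. Then Remark~\ref{closeable-close} (a special case of Lemma~\ref{lem:close}) gives $\gamma(y)=\Emp(y,q)$ and $\D(\gamma(y),\Emp(x,p))<\eps$. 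Combining with $\D(\Emp(x,p),\mu)<\eps$ (valid since $p\ge N$) and the triangle inequality, $\D(\gamma(y),\mu)<2\eps$. As $\gamma(y)\in\MTp(K)$ and $\eps>0$ was arbitrary, $\mu\in\overline{\MTp(K)}$.

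Since $\mu\in\MTe(X)$ was arbitrary, this shows $\MTe(X)\subset\overline{\MTp(K)}$; because $\MTp(K)\subset\MTe(X)$ (a CO-measure on a single periodic orbit is ergodic), we get $\overline{\MTp(K)}\supset\overline{\MTe(X)}\cap\MTe(X)$, and in particular $\MTp(K)$ is dense in $\MTe(X)$ in the subspace topology.

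There is essentially no serious obstacle here: the theorem is almost immediate once Lemma~\ref{lem:close}/Remark~\ref{closeable-close} and the definitions are in place. The only point requiring a little care is bookkeeping of the constants — making sure that the $\eps$ fed into the closeability definition and the $\eps$ controlling genericity are reconciled, and that $p\ge N$ is actually used to invoke genericity at scale $p$ (not just at the larger scale $q$). One should also note explicitly that every CO-measure supported on a single periodic orbit is ergodic, so that $\MTp(K)\subset\MTe(X)$ and the statement "dense in $\MTe(X)$" makes sense.
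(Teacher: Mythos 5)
Your proof is correct and follows essentially the same route as the paper's: pick a $K$-closeable generic point $x$ for $\mu$, use genericity to fix $N$ with $\D(\Emp(x,n),\mu)$ small for $n\ge N$, invoke closeability at that $N$, and conclude via Remark~\ref{closeable-close} and the triangle inequality. The only cosmetic difference is that the paper feeds $\eps/2$ into both steps to land exactly at $\eps$, whereas you end at $2\eps$, which is immaterial.
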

\begin{proof}
Let $\mu\in \MTe(X)$. Take any $\eps>0$. Let $x$ be a closeable generic point for $\mu$. Take $N>0$ be such that
$\D(\mu,\Emp(x,n))<\eps/2$ for every $n\ge N$. Use $K$-closeability for that $N$ and $\eps/2$ to find $p,q\in\N$ with $N\le p\le q\le (1+\eps/2)p$  and a periodic point $z\in B(x,p,\eps/2)\cap K$ such that
$T^q(z)=z$. We have
\[
\D(\gamma(z),\mu)\le \D(\Emp(z,q),\Emp(x,p))+\D(\Emp(x,p),\mu)\le \eps,
\]
because $\D(\Emp(z,q),\Emp(x,p))\le\eps/2$ by Remark~\ref{closeable-close}.
\end{proof}

\section{Linkability}\label{sec:linkability}

We now introduce the \emph{linking property} (or \emph{linkability}).
It can be seen as a specification-like property which applies only to a subset
 of periodic points.

\begin{definition}\label{def:linkability-2}
A set $K\subset\Per(T)$ is \emph{linkable} if for every $y_1,y_2\in K$, $\eps>0$ and $\lambda\in[0,1]$ there exist $p_1,p_2,q_1,q_2\in\N$ and $z\in K$ satisfying the following conditions:
\begin{enumerate}
  \item $T^{q_2}(z)=z$;
  \item $\displaystyle \lambda-\varepsilon\le\frac{p_1}{p_1+p_2}\le\lambda+\varepsilon$;
  \item $p_1\le q_1\le (1+\varepsilon)p_1\quad\text{and}\quad
	z\in B(y_1,p_1,\varepsilon)$;
\item $p_2\le q_2-q_1\le(1+\varepsilon)p_2 \quad\text{and}\quad
	T^{q_1}(z)\in B(y_2,p_2,\varepsilon)$.
\end{enumerate}
\end{definition}
We may (and do) assume that $K$ is $T$-invariant. Furthermore, we show that, given other parameters, one can find $p_1,p_2\in\N$ as above that are divisible by an independently chosen $N\in\N$.
\begin{lemma}
Let $N\in\N$, $\lambda\in[0,1]$ and $\eps>0$.
If $K\subset \Per(T)$ is linkable, $z\in K$, $y_1,y_2\in K$,  then
there are $p_1,p_2,q_1,q_2\in\N$ such that conditions from Definition \ref{def:linkability-2} are satisfied and
$N$ divides $p_j$ for $j=1,2$. In particular, one may assume that  $T^{p_j}(y_j)=y_j$ for $j=1,2$.
\end{lemma}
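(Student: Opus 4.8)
The plan is to reduce the statement to the observation that an \emph{irrational} target ratio forces the data produced by linkability to be large, and then to round the lengths down to multiples of $N$.

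I would first dispose of the ``in particular'' clause by a harmless enlargement of $N$: replacing $N$ by the least common multiple of $N$, $\per(y_1)$ and $\per(y_2)$, one has $T^{p_j}(y_j)=y_j$ whenever $N\mid p_j$. So it suffices to produce $z\in K$ and $p_1,p_2,q_1,q_2\in\N$ satisfying conditions (1)--(4) of Definition~\ref{def:linkability-2} with $N\mid p_1$ and $N\mid p_2$ for this (possibly enlarged) $N$; I may also assume $\varepsilon<1$.

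The one genuine idea is that an irrational target ratio forces the lengths coming out of linkability to be large. I would fix an irrational $\lambda'\in(0,1)$ with $\lvert\lambda-\lambda'\rvert<\varepsilon/4$ (if $\lambda\in\{0,1\}$, take $\lambda'$ just inside $(0,1)$ on the appropriate side), and set $c=\tfrac12\min\{\lambda',1-\lambda'\}>0$. For each $M\in\N$ only finitely many rationals $a/b$ with $b\le M$ exist, so $\delta_M:=\inf\{\lvert\lambda'-a/b\rvert\colon a\in\N_0,\ b\in\N,\ b\le M\}>0$; I would then pick $\varepsilon'>0$ with $\varepsilon'<\min\{\varepsilon/4,\,c,\,\delta_M\}$, where $M$ is a large integer to be fixed at the end. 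Applying linkability to $y_1,y_2$ with parameters $\lambda'$ and $\varepsilon'$ yields $z\in K$ and $p_1',p_2',q_1',q_2'\in\N$ satisfying (1)--(4) with $\varepsilon'$, $\lambda'$ in place of $\varepsilon$, $\lambda$. Since $p_1'/(p_1'+p_2')$ lies within $\varepsilon'<\delta_M$ of $\lambda'$, its denominator $p_1'+p_2'$ exceeds $M$; and since that fraction lies in $[\lambda'-\varepsilon',\lambda'+\varepsilon']\subset[c,1-c]$, both $p_1'$ and $p_2'$ are at least $cM$.

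I would then round the lengths down: set $p_j:=N\lfloor p_j'/N\rfloor$ for $j=1,2$ and keep $q_1:=q_1'$, $q_2:=q_2'$ and the same $z$. Then $N\mid p_j$, condition (1) is unchanged, and since $p_j\le p_j'$ the Bowen ball inclusions of (3) and (4) persist, $z\in B(y_1,p_1,\varepsilon)$ and $T^{q_1}(z)\in B(y_2,p_2,\varepsilon)$. The remaining inequalities are perturbed only additively by at most $N$ (and the ratio by $O(N/M)$), hence are negligible once $M\gg N/\varepsilon$: from $p_j'-N<p_j\le p_j'$ one gets $p_1\le q_1'\le(1+\varepsilon')p_1'\le(1+\varepsilon')(p_1+N)\le(1+\varepsilon)p_1$, likewise $p_2\le q_2'-q_1'\le(1+\varepsilon)p_2$, and $\lvert p_1/(p_1+p_2)-p_1'/(p_1'+p_2')\rvert\le 3N/(M-2N)$, so $p_1/(p_1+p_2)$ differs from $\lambda$ by less than $\varepsilon/4+\varepsilon'+3N/(M-2N)<\varepsilon$. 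Fixing $M$ large enough for these two estimates (and for $p_j\ge N\ge 1$) completes the argument.

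The step I expect to be the main obstacle is precisely the first half: linkability in itself imposes no lower bound whatsoever on $p_1,p_2$ (for a rational target such as $\lambda=1/2$ it may legitimately return $p_1=p_2=1$), so divisibility by an arbitrary $N$ cannot be arranged by rounding unless the lengths are already large. Perturbing the target ratio to a nearby irrational one, and exploiting that rationals close to an irrational must have large denominator, is exactly what forces the lengths to be large while keeping the ratio within the prescribed tolerance; everything after that is bookkeeping.
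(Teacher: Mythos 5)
Your proposal is correct and follows essentially the same route as the paper: perturb the target ratio so that the tolerance interval avoids all rationals of denominator at most $M$ (the paper chooses such an interval directly, you realize it via an irrational $\lambda'$ and $\eps'<\delta_M$), conclude that the lengths returned by linkability are large, then adjust $p_1,p_2$ to multiples of $N$, taking $N$ divisible by the minimal periods to get $T^{p_j}(y_j)=y_j$. Your write-up additionally makes explicit a point the paper's sketch leaves implicit, namely that each $p_j$ individually (not just $p_1+p_2$) must be large for the rounding to preserve conditions (3)--(4), which your choice $\eps'<c$ guarantees.
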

\begin{proof}
For any $M\in\N$ one can find $\lambda'\in[0,1]$ and $\eps'>0$ so that
\[
\lambda-\varepsilon<\lambda'-\varepsilon'<\lambda'+\varepsilon'<\lambda+\varepsilon
\]
and there is no rational number with denominator smaller that $M$ between $\lambda'-\varepsilon'$ and $\lambda'+\varepsilon'$. Use Definition \ref{def:linkability-2} to find $p_1,p_2$ and $q_1,q_2$ for $y_1,y_2\in K$, $\lambda'\in[0,1]$ and $\eps'>0$. Note that $p_1+p_2\ge M$. Therefore
if $M$ is sufficiently large, then replacing $p_1$ and $p_2$ by some multiples of $N$ will not affect the other conditions for original $\eps$ and $\lambda$.  We can pick $N$ which is a multiple of minimal periods of $y_1$ and $y_2$. Then $T^{p_j}(y_j)=y_j$ for $j=1,2$.
\end{proof}

\begin{remark}
The \emph{barycenter property} introduced in \cite[Definition 4.5]{ABC} is implied by the linkability, but it is unclear if the converse is true.
Recall that a set of periodic points $K\subset\Per(T)$ has the \emph{barycenter property} if, for any two points $y_1, y_2 \in K$, any $\lambda \in [0, 1]$ and $\eps > 0$, there exist $x \in K$ and pairwise disjoint sets $I, J \subset \N \cap [0, \per(x))$ such that
\begin{align}
\lambda-\eps<\frac{\# I}{\per(x)}<\lambda+\eps&\text{ and }1-\lambda-\eps<\frac{\# J}{\per(x)}<1-\lambda+\eps,\\
\rho^T_I(x,y_1)<\eps&\text{ and }\rho^T_J(x,y_2)<\eps,
\end{align}
where $\rho_L^T(z,w)$ denotes a maximum distance between the orbits of $z,w\in X$ over finite $L\subset \N\cup\{0\}$, that is,  $\rho^T_L(z,w)=\max\{\rho(T^m(z),T^m(w)):m\in L\}$.
Note that no further assumption is made about sets $I$ and $J$. They can be arbitrary, while Definition \ref{def:linkability-2} requires that $I$ and $J$ are intervals (contain consecutive integers). Because of this difference the barycenter property can not replace linkability in the proof of existence of generic points.
\end{remark}
Actually, most examples with linkability fulfill the following stronger form.
\begin{definition}\label{def:linkability}
We say that a set $K\subset\Per(T)$ is \emph{strongly linkable} provided that for every $y_1,y_2\in K$ and every $\eps>0$ there exists an integer $N=N(y_1,y_2,\eps)$  such that for any integers $p_1,p_2\ge N$ with
$T^{p_j}(y_j)=y_j$ for $j=1,2$ there are $z\in K$ and integers $q_1 \le q_2$ satisfying $T^{q_2}(z)=z$ and
\[\begin{split}
	&p_1\le q_1\le (1+\varepsilon)p_1\quad\text{and}\quad
	z\in B(y_1,p_1,\varepsilon)\\
	&p_2\le q_2-q_1\le(1+\varepsilon)p_2 \quad\text{and}\quad
	T^{q_1}(z)\in B(y_2,p_2,\varepsilon).\\
\end{split}\]
\end{definition}

\begin{remark}
Strong linkability is in the spirit of non-uniform specification introduced by Climenhaga and Thompson in \cite[Definition 2.1]{CT2012}. The latter applies to shift spaces where
the specification property does not hold globally, but does hold on some collection of subwords. It can be proved that in some cases
the collection of ``good'' words considered in \cite{CT2012} coincides with the collection of blocks defining a (strongly) linkable set of periodic points.
However, it turns out that neither of those properties can be used in place of the other one. Indeed,
non-uniform specification   implies intrinsic ergodicity \cite[Theorem C]{CS}, while Proposition \ref{prop:counter-mult-mme} below shows that strong linkability and closeability cannot guarantee that. On the other hand, one can verify that the
shift space from Proposition \ref{thm:independence} satisfies \cite[Definition 2.1]{CT2012}, but the ergodic measures of this system are not dense among all invariant measures.
We refer the reader to \cite{CT2012} and \cite{KOR} for more details on this non-uniform specification.
\end{remark}


\begin{remark}\label{linkable-close}
Let $K\subset \Per(T)$ be linkable. If $z\in K$ and $p_1,p_2,q_1,q_2\in\N$ are chosen for some $y_1,y_2\in\N$, $\lambda\in[0,1]$ and $\eps>0$ as in Definition \ref{def:linkability} with $T^{p_j}(y_j)=y_j$ for $j=1,2$, then using Lemma \ref{lem:close} we get
\[
	\D(\Emp(z,q_1),\gamma(y_1))
	<\eps\quad\text{and}\quad\D(\Emp(T^{q_1}(z),q_2-q_1),\gamma(y_2))
	<\eps.
\]
It follows from the above and Lemma \ref{lem:aux}\eqref{lem:convex-comb} that $\gamma(z)$
is close to the convex combination $(q_1/q_2)\gamma(y_1)+(1-q_1/q_2)\gamma(y_2)$, more precisely
\begin{equation}\label{inequality}
\D\Big(\Emp(z,q_2),\frac{q_1}{q_2}\gamma(y_1)+\frac{q_2-q_1}{q_2}\gamma(y_2)\Big)
 \le \eps.
\end{equation}
We also have
\[
\frac{p_1}{(1+\eps)(p_1+p_2)}\le\frac{q_1}{q_2}\le \frac{(1+\eps)p_1}{p_1+p_2},
\]
and deducting $p_1/(p_1+p_2)$ we conclude that
\[
\bigg\lvert\frac{q_1}{q_2}-\frac{p_1}{p_1+p_2}\bigg\rvert<\eps.
\]
Now, the last inequality together with \eqref{inequality} and Lemma \ref{lem:aux}\eqref{lem:convex-comb} gives us
\begin{equation}\label{inequality2}
	\D\Big(\Emp(z,q_2),
	\frac{p_1}{p_1+p_2}\gamma(y_1)+\frac{p_2}{p_1+p_2}\gamma(y_2)\Big)
	\le 2\eps.
\end{equation}
By Lemma \ref{lem:aux}\eqref{lem:uniform-close} the inequality $|\frac{p_1}{p_1+p_2}-\lambda|<\eps$ implies
\[
	\D\Big(\lambda \gamma(y_1)+ (1-\lambda)\gamma(y_2),
	\frac{p_1}{p_1+p_2}\gamma(y_1)+\frac{p_2}{p_1+p_2}\gamma(y_2)\Big)
	\le \eps,
\]
therefore
\begin{equation}\label{inequality3}
	\D\Big(\Emp(z,q_2),\lambda \gamma(y_1)+ (1-\lambda)\gamma(y_2)\Big)
	\le 3\eps.
\end{equation}
Hence, $\gamma(z)$ is close to the convex combination $\lambda \gamma(y_1)+ (1-\lambda)\gamma(y_2)$.
\end{remark}

The following lemma is an easy consequence of definitions.

\begin{lemma}
Let $(X,T)$ be a dynamical system and $(Y,S)$ be its factor through uniformly continuous semiconjugacy $\Phi\colon X\to Y$.
If $K\subset\Per(T)$ is (strongly) linkable, then the same holds for $\Phi(K)\subset\Per(S)$.
\end{lemma}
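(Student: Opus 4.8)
The plan is to unwind the definitions on both sides and transport the witnesses along $\Phi$, using uniform continuity to convert Bowen-ball membership in $X$ into Bowen-ball membership in $Y$. First I would fix $w_1,w_2\in\Phi(K)$, $\eps>0$ and (in the non-strong case) $\lambda\in[0,1]$; since $\Phi$ maps $\Per(T)$ into $\Per(S)$ and is onto $\Phi(K)$, pick $y_1,y_2\in K$ with $\Phi(y_j)=w_j$. The key observation is that uniform continuity of $\Phi$ gives, for the target $\eps$, some $\delta=\delta(\eps)>0$ with $\rho(a,b)<\delta\Rightarrow \rho_Y(\Phi(a),\Phi(b))<\eps$; moreover, since $\Phi$ intertwines $T$ and $S$, one has $\Phi(\db_T(x,n,\delta))\subset \db_S(\Phi(x),n,\eps)$.

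Next I would apply linkability (resp.\ strong linkability) of $K$ in $X$ with the same $y_1,y_2$, the same $\lambda$, and the smaller error parameter $\delta$ (and, in the strong case, absorb $N$: the $N$ produced in $X$ for $(y_1,y_2,\delta)$ serves as the $N$ for $(w_1,w_2,\eps)$ in $Y$, noting $T^{p_j}(y_j)=y_j$ implies $S^{p_j}(w_j)=w_j$). This yields $z\in K$ and integers $p_1,p_2,q_1,q_2$ with $T^{q_2}(z)=z$, the ratio condition $|p_1/(p_1+p_2)-\lambda|$ bounded, the length conditions $p_1\le q_1\le(1+\delta)p_1\le(1+\eps)p_1$, $p_2\le q_2-q_1\le(1+\delta)p_2\le(1+\eps)p_2$, and $z\in\db_T(y_1,p_1,\delta)$, $T^{q_1}(z)\in\db_T(y_2,p_2,\delta)$. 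Setting $\zeta=\Phi(z)\in\Phi(K)$, equivariance gives $S^{q_2}(\zeta)=\zeta$ and $S^{q_1}(\zeta)=\Phi(T^{q_1}(z))$; the choice of $\delta$ then upgrades the two Bowen-ball conditions to $\zeta\in\db_S(w_1,p_1,\eps)$ and $S^{q_1}(\zeta)\in\db_S(w_2,p_2,\eps)$. All the remaining (purely arithmetic) conditions on $p_i,q_i$ are unchanged, so $\Phi(K)$ is linkable (resp.\ strongly linkable).

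There is essentially no hard part here: the statement is, as the authors say, ``an easy consequence of definitions.'' The only point requiring a little care is the bookkeeping of the error parameter — one must run linkability in $X$ with $\delta$ rather than $\eps$ so that, after pushing forward through $\Phi$, the Bowen-ball estimates come out with the prescribed $\eps$; since $\delta<\eps$ may be assumed, the factors $(1+\delta)$ in the length conditions only improve to $(1+\eps)$, and the ratio bound is untouched because $\delta<\eps$. One should also record explicitly that $\Phi$ sends periodic points to periodic points with the same period dividing relation (immediate from $\Phi\circ T=S\circ\Phi$), which is what lets the strong-linkability hypothesis $T^{p_j}(y_j)=y_j$ translate to $S^{p_j}(w_j)=w_j$ and back. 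The same proof handles both clauses of the lemma simultaneously, the only difference being whether $\lambda$ and the $p_i$'s are chosen by the prover or supplied as data.
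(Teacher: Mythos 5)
Your treatment of plain linkability is correct and is surely the intended argument (the paper gives no proof, calling the lemma an easy consequence of the definitions): pick preimages $y_j\in K$ of $w_j$, run Definition~\ref{def:linkability-2} in $X$ at the uniform-continuity modulus $\delta\le\eps$, and use equivariance to get $\Phi\bigl(B(x,n,\delta)\bigr)\subset B(\Phi(x),n,\eps)$, so that $\zeta=\Phi(z)$ witnesses linkability of $\Phi(K)$ with the same $p_j,q_j$.

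The strong case, however, has a gap at the phrase ``and back.'' Equivariance gives $T^{p}(y)=y\Rightarrow S^{p}(\Phi(y))=\Phi(y)$, but not the converse: the minimal period of $w_j=\Phi(y_j)$ only \emph{divides} that of $y_j$ and may be a proper divisor (e.g.\ $\Phi$ may collapse a period-two orbit of $K$ onto a fixed point, and $K$ need not contain a fixed preimage). Strong linkability of $\Phi(K)$ quantifies over \emph{all} $p_1,p_2\ge N'$ with $S^{p_j}(w_j)=w_j$, i.e.\ over all multiples of the minimal period of $w_j$, whereas your appeal to strong linkability of $K$ only covers those $p_j$ that are also periods of $y_j$. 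The statement survives, but needs an extra rounding step: writing $m_j$ for the minimal period of $y_j$, replace $p_j$ by $\tilde p_j=\lceil p_j/m_j\rceil m_j$, apply strong linkability in $X$ to $\tilde p_1,\tilde p_2$ (legitimate since $T^{\tilde p_j}(y_j)=y_j$ and $\tilde p_j\ge p_j\ge N'$), note that $B(y_j,\tilde p_j,\delta)\subset B(y_j,p_j,\delta)$ because $\tilde p_j\ge p_j$, and choose $N'$ large enough that the discrepancy $\tilde p_j-p_j<m_j$ is absorbed into the multiplicative slack: $q_1\le(1+\delta)\tilde p_1<(1+\delta)p_1+(1+\delta)m_1\le(1+\eps)p_1$ once $(1+\delta)m_j\le(\eps-\delta)N'$ (with $\delta<\eps$), and likewise for $q_2-q_1$. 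With this repair the push-forward through $\Phi$ goes through verbatim.
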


It is  possible that a proper subset $K$ of $\Per(T)$ is linkable, but $\Per(T)$ is not. A trivial example is when $K$ consists of one point in a system consisting of a finite number (but at least two) of disjoint periodic orbits, less trivial one is presented in Proposition \ref{prop:Dyck} below.

It may also happen that there are two proper linkable subsets $K_1,K_2\subset\Per(T)$ such that $K_1\cap K_2\neq\emptyset$ and $K_1\cup K_2$ is not linkable. A simple example is given by the map $S\colon [-1,1]\to[-1,1]$ defined as
\[
S(x)=
\begin{cases}
T(x),  &x\ge 0,\\
-T(-x),&x<0,
\end{cases}
\]
where $T(x)=1-|2x-1|$ is the \emph{full tent map} on $[0,1]$. Let $K_1=\Per(S)\cap [0,1]$ and $K_2=\Per(S)\cap [-1,0]$. Then $K_1\cap K_2=\{0\}$ and $K_j$ ($j=1,2$) is linkable since $T$ has the periodic specification property, but the set $\Per(S)=K_1\cup K_2$ is not linkable (look at fixed points $2/3$ and $-2/3$). The Dyck shift is a topologically transitive example (even mixing) with a similar behavior (see Proposition \ref{prop:Dyck}).

The following lemma provides a criterion for linkability of the whole system.
Its proof is left to the reader.

\begin{lemma}\label{lem:linkablity-criterion}
If $K\subset\Per(T)$ is (strongly) linkable and every point $x\in\Per(T)$ is $K$-closeable, then $\Per(T)$ is (strongly) linkable.
\end{lemma}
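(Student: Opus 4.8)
The plan is to establish linkability of $\Per(T)$ by combining the linking property on the subset $K$ with the ability, provided by $K$-closeability, to replace the two arbitrary periodic points $y_1,y_2\in\Per(T)$ by nearby periodic points from $K$, and then to feed those substitutes into the linking property for $K$. I would first reduce, using a small $\eps$-budget split, to proving the following: given $y_1,y_2\in\Per(T)$, $\lambda\in[0,1]$ and $\eps>0$, there are $p_1,p_2,q_1,q_2\in\N$ and $z\in K\subset\Per(T)$ satisfying conditions (1)--(4) of Definition~\ref{def:linkability-2} with $\eps$ in place of the tolerances.

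First I would apply $K$-closeability of $y_1$ and of $y_2$. Fix a scale $N\in\N$ to be chosen later. By Definition~\ref{def:closeable} applied to $y_j$ with tolerance $\eps/3$ (say) and the constant $N$, there are integers $p_j\le q_j'\le(1+\eps/3)p_j$ with $p_j\ge N$ and points $y_j'\in B(y_j,p_j,\eps/3)\cap K$ with $T^{q_j'}(y_j')=y_j'$. Since $y_j$ is periodic, $\rho(T^m(y_j'),T^m(y_j))<\eps/3$ for all $0\le m<p_j$; as $y_j$ has minimal period dividing some number, by taking $N$ a multiple of $\per(y_1)$ and $\per(y_2)$ one can arrange $p_j$ to be an exact period of $y_j$, so that the orbit of $y_j'$ stays within $\eps/3$ of the orbit of $y_j$ along the whole block of length $p_j$ that we will use. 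Thus the problem of $\eps$-tracing $y_1$ and $y_2$ is reduced, at the cost of $\eps/3$, to $\eps$-tracing the points $y_1',y_2'\in K$ over their respective periods.

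Next I would invoke linkability of $K$: applied to $y_1',y_2'\in K$, the same $\lambda$, and a tolerance of order $\eps/3$, it yields $\bar p_1,\bar p_2,\bar q_1,\bar q_2\in\N$ and $z\in K$ with $T^{\bar q_2}(z)=z$, with $\bar p_1/(\bar p_1+\bar p_2)$ within $\eps/3$ of $\lambda$, with $\bar p_1\le\bar q_1\le(1+\eps/3)\bar p_1$ and $z\in B(y_1',\bar p_1,\eps/3)$, and with $\bar p_2\le\bar q_2-\bar q_1\le(1+\eps/3)\bar p_2$ and $T^{\bar q_1}(z)\in B(y_2',\bar p_2,\eps/3)$. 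Since $\bar p_j$ can (by the lemma following Definition~\ref{def:linkability-2}, applied to the linkable set $K$) be taken to be exact periods of $y_j'$ — or at any rate large multiples of $p_j$, so that the $\eps/3$-shadowing of $y_j'$ by its own orbit persists over the longer block — the orbit of $z$ over the first $\bar p_1$ coordinates is within $\eps/3$ of $y_1'$, hence within $2\eps/3<\eps$ of $y_1$, and similarly $T^{\bar q_1}(z)$ traces $y_2$ within $\eps$ over $\bar p_2$ coordinates. Setting $p_j:=\bar p_j$, $q_1:=\bar q_1$, $q_2:=\bar q_2$, all four conditions of Definition~\ref{def:linkability-2} hold for $y_1,y_2\in\Per(T)$; since $z\in K\subset\Per(T)$, this proves $\Per(T)$ is linkable. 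For the ``strongly linkable'' variant, one runs the identical argument but uses Definition~\ref{def:linkability} (no $\lambda$): $K$-closeability still supplies $y_j'\in K$ tracing $y_j$ over a chosen period, strong linkability of $K$ supplies the threshold $N(y_1',y_2',\eps/3)$ and the gluing point $z$, and the threshold for $\Per(T)$ is taken to dominate both $N(y_1',y_2',\eps/3)$ and the periods appearing in the closing step.

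The main obstacle is bookkeeping of the length scales: one must ensure the blocks $p_j$ coming from the closing step and the blocks $\bar p_j$ demanded by (strong) linkability of $K$ are made compatible — concretely, that $\bar p_j$ is a multiple of the period $p_j$ used when closing $y_j$ (so the closing approximation is not degraded when extended to a longer block) and simultaneously exceeds the thresholds required by $K$'s (strong) linkability. This is exactly the kind of juggling already handled in the lemma after Definition~\ref{def:linkability-2} and in Definition~\ref{def:linkability}, so the argument goes through; the error terms are linear in $\eps$ and collapse to the required tolerances after relabelling $\eps$. I expect no genuinely new difficulty beyond this coordination of parameters.
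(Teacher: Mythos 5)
The paper gives no proof of this lemma (it is explicitly ``left to the reader''), so there is nothing to compare against; your two-step strategy---first close $y_1,y_2$ into $K$, then link the resulting points inside $K$---is certainly the intended route. However, as written your argument has a genuine gap at the point where the two steps are spliced together. Closeability gives you $y_j'\in B(y_j,P_j,\eps/3)\cap K$ with $T^{Q_j}(y_j')=y_j'$ and $P_j\le Q_j\le(1+\eps/3)P_j$; this controls $\rho(T^m(y_j'),T^m(y_j))$ only for $0\le m<P_j$. Nothing is known on the closing gap $[P_j,Q_j)$, and after time $Q_j$ the orbit of $y_j'$ returns to $y_j'$ while $y_j$ has moved to $T^{Q_j}(y_j)$, which is not $y_j$ unless $Q_j$ happens to be a multiple of $\per(y_j)$---and closeability gives you no control over $Q_j$ modulo $\per(y_j)$. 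So your assertion that $z\in B(y_1',\bar p_1,\eps/3)$ implies $z$ is within $2\eps/3$ of $y_1$ ``over the first $\bar p_1$ coordinates'' is unjustified whenever $\bar p_1>P_1$: the two orbits desynchronize after one period of $y_1'$. Making $\bar p_j$ a multiple of $p_j$ (or of $Q_j$) does not repair this; it only makes the block on which the uncontrolled desynchronization occurs longer.

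The forced conclusion is that you must take the output block length $p_1\le P_1$, and then condition (3) of Definition~\ref{def:linkability-2}, namely $q_1\le(1+\eps)p_1$, forces the first linking block to satisfy $\bar p_1\le\bar q_1\le(1+\eps)P_1$, i.e.\ the transition to $y_2'$ must occur after essentially a \emph{single} period of $y_1'$. Plain linkability of $K$ gives you no control whatsoever over the size of $\bar p_1$ (Lemma~5.2 only adds a divisibility constraint), and strong linkability only lets you choose $\bar p_1$ among periods of $y_1'$ exceeding a threshold $N(y_1',y_2',\cdot)$ that is determined \emph{after} $y_1'$, hence after $Q_1$, has been fixed---so there is no guarantee that $Q_1$ itself is an admissible choice, and taking a higher multiple of $Q_1$ reinstates the desynchronization problem. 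This is not mere bookkeeping: in the paper's actual applications (coded systems, where $y_j'$ is a single loop through a common vertex and two loops can be concatenated once) the difficulty evaporates, but from the abstract definitions alone your argument does not close, and any correct proof must explain how to arrange $\bar q_1\le(1+\eps)P_1$.
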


\begin{proposition}\label{prop:spec-link}
If  $T\colon X\to X$ has the periodic specification property, then $\Per(T)$ is strongly linkable.
\end{proposition}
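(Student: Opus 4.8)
The plan is to verify the four conditions of Definition~\ref{def:linkability} (strong linkability) directly from the periodic specification property, treating the two given periodic points $y_1,y_2\in\Per(T)$ and the gluing constant $\varepsilon>0$ as fixed. First I would record that the periodic specification property hands us, for the given $\varepsilon$, an integer $M=M(\varepsilon)$ such that every $M$-spaced specification is $\varepsilon$-traced by a point whose period equals $\max I+M$. The natural choice is $N=N(y_1,y_2,\varepsilon)$ large enough that $M<\varepsilon N$; then for any $p_1,p_2\ge N$ with $T^{p_j}(y_j)=y_j$ we build the specification $\xi$ on $I=[0,p_1-1]\cup[p_1+M,p_1+M+p_2-1]$ by setting $\xi(t)=T^t(y_1)$ on the first block and $\xi(t)=T^{t-(p_1+M)}(y_2)$ on the second block. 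This is a legitimate $M$-spaced specification, so it is $\varepsilon$-traced by a periodic point $z$ with period $q_2=p_1+M+p_2$; put $q_1=p_1+M$.

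Next I would check that $z$, $q_1$, $q_2$ satisfy conditions (1)--(4). Condition (1), $T^{q_2}(z)=z$, is exactly what specification gives. For condition (3): the first block of the tracing says $\rho(T^j(z),T^j(y_1))<\varepsilon$ for $0\le j<p_1$, i.e. $z\in B(y_1,p_1,\varepsilon)$, and $p_1\le q_1=p_1+M\le p_1+\varepsilon N\le(1+\varepsilon)p_1$ since $p_1\ge N$. For condition (4): the second block says $\rho(T^{q_1+j}(z),T^j(y_2))<\varepsilon$ for $0\le j<p_2$, i.e. $T^{q_1}(z)\in B(y_2,p_2,\varepsilon)$, and $q_2-q_1=p_2$, so $p_2\le q_2-q_1=p_2\le(1+\varepsilon)p_2$ trivially. (One can even take $q_2-q_1=p_2$ on the nose; the one-sided slack in Definition~\ref{def:linkability} is more than enough.) Notice that condition (2) of Definition~\ref{def:linkability-2} involving $\lambda$ does not appear in the strong form — Definition~\ref{def:linkability} has no $\lambda$ — so there is nothing to arrange there, and strong linkability plus Lemma~\ref{lem:linkablity-criterion} or just the definitions then yields ordinary linkability as well.

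The only mild subtlety, and the step I would be most careful about, is bookkeeping with the indices: making sure the gap $q_1-p_1=M$ is at least $M$ (it equals $M$, so it is an $M$-spaced specification, which is what the property requires), and that $p = \max I + M$ computed from $I$ really equals $q_2=p_1+M+p_2$. Since $\max I = (p_1+M)+(p_2-1)$, we get $p=\max I+M=p_1+p_2+2M$, so in fact the period produced is $q_2=p_1+p_2+2M$ rather than $p_1+p_2+M$; accordingly one should set $q_1=p_1+M$ and note $q_2-q_1=p_2+M$, so condition (4) reads $p_2\le q_2-q_1=p_2+M\le p_2+\varepsilon N\le(1+\varepsilon)p_2$, again using $p_2\ge N$. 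With $N$ chosen so that $M<\varepsilon N$ both inequalities $q_1\le(1+\varepsilon)p_1$ and $q_2-q_1\le(1+\varepsilon)p_2$ hold, and taking $K=\Per(T)$ (which one may assume $T$-invariant) completes the verification. No genuine obstacle arises; the proposition is essentially a repackaging of the specification property, exactly as in the proof of Proposition~\ref{prop:spec-closeable}.
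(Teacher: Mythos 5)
Your proposal is correct and follows essentially the same route as the paper: build a two-block $M$-spaced specification along the orbits of $y_1$ and $y_2$, let the specification property produce the periodic tracing point $z$, and choose $N$ so that $M<\varepsilon N$ makes the ratio conditions $q_1\le(1+\varepsilon)p_1$ and $q_2-q_1\le(1+\varepsilon)p_2$ automatic. The index bookkeeping you flag (the period being $\max I+M$, hence $q_2-q_1=p_2+M$ up to an off-by-one) is handled the same way in the paper's proof and causes no difficulty.
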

\begin{proof}
Let $y_1,y_2\in \Per(T)$ and $m_1,m_2\in\N$ be their respective minimal periods. Given $\eps>0$ we choose $M=M(\eps)$ by the specification property. Let $n\in \N$ be such that $M<\eps \cdot m_j\cdot n$ for $j=1,2$. We set $N=N(y_1,y_2,\eps)=n\cdot m_1\cdot m_2$. Take any integers $p_1,p_2$ satisfying $p_j\ge N$ and
$T^{p_j}(y_j)=y_j$ for $j=1,2$. Set $q_0=0$,
 $q_1=p_1+M-1$, and $q_2=q_1+p_2+M$. Let $z$ be a point guaranteed by the specification property for the specification
$\xi\colon I\to X$, where $I=\{0,\ldots,p_1-1\}\cup \{q_1,\ldots, q_1+p_2-1\}$  and $\xi(k)=T^{k-q_{j-1}}(y_j)$ for $q_{j-1}\le k \le q_{j-1}+p_j-1$, $j=1,2$. Then  $p_j\le q_j-q_{j-1} \le (1+\eps) p_j$ and $T^{q_{j-1}}(z)\in \db(y_j,p_j,\eps)$ for $j=1,2$.
\end{proof}

\begin{proposition}\label{concatenation-coded}
If $X$ is a coded system presented by a labelled graph $(G,\lab)$, then the set of all periodic points presented by closed paths in $G$ passing through a fixed vertex is strongly linkable.
\end{proposition}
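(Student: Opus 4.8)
The plan is to realize, for any $y_1,y_2\in K$ and $\eps>0$, a single loop in $G$ passing through the fixed vertex $v$ whose label is a long power of the block coding $y_1$, then a short connecting block, then a long power of the block coding $y_2$, then a short connecting block that closes the loop up again through $v$. The periodic point read off this loop will lie in $K$ and will shadow $y_1$ and then $y_2$ for the required numbers of iterates, with the gaps between consecutive blocks bounded independently of $p_1,p_2$.

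Here are the steps. Fix the vertex $v$. For $j=1,2$, since $y_j\in K$ we may choose a closed path $\pi_j$ in $G$ of some length $L_j$ that presents $y_j$ and passes through $v$; thus $y_j=w_j^\infty$ with $w_j=\lab(\pi_j)$ and $|w_j|=L_j$. Cutting $\pi_j$ at its first visit to $v$ gives a factorization $\pi_j=\mu_j\nu_j$, where $\mu_j$ is a path from the base point $s_j$ of $\pi_j$ to $v$ and $\nu_j$ is a path from $v$ to $s_j$, with $|\mu_j|+|\nu_j|=L_j$ (one of them may be empty). Now fix $k_0\in\N$ with $2^{-k_0}<\eps$. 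Given $p_1,p_2\ge N$ (with $N$ chosen below), set $t_j=\lceil (p_j+k_0)/L_j\rceil$, so that $t_jL_j\in[p_j+k_0,\,p_j+k_0+L_j)$, and form the closed path
\[
\Pi=\pi_1^{\,t_1}\cdot(\mu_1\nu_2)\cdot\pi_2^{\,t_2}\cdot(\mu_2\nu_1).
\]
This is a legitimate closed path in $G$: the terminal/initial vertices match along $s_1\to s_1\to v\to s_2\to s_2\to v\to s_1$, it is based at $s_1$, and it passes through $v$ (e.g.\ at the end of $\mu_1$). Put $z=\lab(\Pi)^\infty$, so $z\in K$. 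With $q_1:=t_1L_1+|\mu_1\nu_2|$ and $q_2:=|\lab(\Pi)|$ we have $T^{q_2}(z)=z$; moreover $z$ begins with $w_1^{\,t_1}$, so $z$ and $y_1=w_1^\infty$ agree on the first $t_1L_1$ coordinates, and $\sigma^{q_1}(z)$ begins with $w_2^{\,t_2}$, so $\sigma^{q_1}(z)$ and $y_2=w_2^\infty$ agree on the first $t_2L_2$ coordinates.

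It remains to verify the inequalities in Definition \ref{def:linkability}. In the symbolic metric, agreement on the first $p_j+k_0$ coordinates forces $\rho(\sigma^iz,\sigma^iy_j)\le 2^{-(k_0+1)}<\eps$ for $0\le i<p_j$, hence $z\in B(y_1,p_1,\eps)$ and $T^{q_1}(z)\in B(y_2,p_2,\eps)$. The lower bounds $p_1\le q_1$ and $p_2\le q_2-q_1$ follow at once from $t_jL_j\ge p_j$. For the upper bounds, note $q_1-t_1L_1=|\mu_1\nu_2|\le L_1+L_2$ and $(q_2-q_1)-t_2L_2=|\mu_2\nu_1|\le L_1+L_2$, so $q_1\le p_1+k_0+2L_1+L_2$ and $q_2-q_1\le p_2+k_0+L_1+2L_2$; therefore it suffices to take
\[
N=N(y_1,y_2,\eps)=\Big\lceil \frac{k_0+2L_1+2L_2}{\eps}\Big\rceil,
\]
which depends only on $y_1,y_2,\eps$ (through the fixed choices of $\pi_1,\pi_2$). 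Finally $q_1<q_2$ is clear. This establishes strong linkability of $K$. The only delicate point — the ``main obstacle'', such as it is — is the bookkeeping ensuring that the long $w_1$-power sits at the very start of $z$ and the long $w_2$-power sits at exactly position $q_1$ while the whole word still closes into a loop through $v$; once the connecting segments $\mu_j,\nu_j$ are extracted from $\pi_j$ this reduces to careful concatenation, and notably neither irreducibility nor the right-resolving property of $(G,\lab)$ is used.
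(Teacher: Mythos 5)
Your proof is correct and follows essentially the same route as the paper, which simply observes that loops through the common vertex can be freely concatenated; you have just carried out the concatenation $\pi_1^{t_1}(\mu_1\nu_2)\pi_2^{t_2}(\mu_2\nu_1)$ and the bookkeeping for $q_1,q_2$ explicitly. The quantitative details (choice of $t_j$, the bound $N=\lceil(k_0+2L_1+2L_2)/\eps\rceil$, and the metric estimate) all check out.
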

\begin{proof} This is obvious, because one can freely concatenate these closed paths as many times as needed to produce a new closed path with required properties.
\end{proof}

\begin{proposition}\label{prop:beta-link}
Every $\beta$-shift $X_\beta$ is strongly linkable.
\end{proposition}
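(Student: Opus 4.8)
The plan is to deduce strong linkability of every $\beta$-shift $X_\beta$ from Proposition~\ref{concatenation-coded} together with the special structure of the graph $\Gamma_\beta=(G_\beta,\lab_\beta)$. Recall from Section~\ref{sec:betashift} that $X_\beta$ is presented by $\Gamma_\beta$, whose vertices are $v_0,v_1,v_2,\ldots$, with a ``forward'' edge $v_i\to v_{i+1}$ labelled $d_{i+1}$ and, whenever $d_{i+1}>0$, with $d_{i+1}$ ``backward'' edges $v_i\to v_0$ labelled $0,1,\ldots,d_{i+1}-1$. The key structural fact is that every vertex has a path back to $v_0$ (follow forward edges until reaching the first index with a positive digit — such an index exists since $\widehat d_\beta$ does not end in infinitely many zeros, by our convention on the improper expansion — then take the backward edge to $v_0$), and from $v_0$ there is a path to every vertex $v_i$ labelled by the prefix $d_1\ldots d_i$. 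Hence $\Gamma_\beta$ is irreducible and $v_0$ lies on a closed path.

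First I would observe that by Proposition~\ref{concatenation-coded}, the set $K_0$ of periodic points presented by closed paths in $G_\beta$ passing through $v_0$ is strongly linkable. Next I would argue that in fact $K_0=\Per(G_\beta,\lab_\beta)$, i.e.\ \emph{every} loop in $G_\beta$ passes through $v_0$. Indeed, the only edges entering a vertex $v_i$ with $i\ge1$ are the single forward edges $v_{i-1}\to v_i$; so any loop that visits some $v_i$ with $i\ge1$ must have entered it from $v_{i-1}$, hence from $v_{i-2}$, and so on down to $v_0$. Thus any loop either stays at $v_0$ (a loop $v_0\to v_0$, which exists iff $d_1>0$) or passes through $v_0$. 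Therefore the strongly linkable set $K_0$ of Proposition~\ref{concatenation-coded} is all of $\Per(G_\beta,\lab_\beta)$.

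It remains to upgrade from $\Per(G_\beta,\lab_\beta)$ to $\Per(X_\beta)$. Here I would invoke Lemma~\ref{lem:linkablity-criterion}: if $K\subset\Per(T)$ is strongly linkable and every $x\in\Per(T)$ is $K$-closeable, then $\Per(T)$ is strongly linkable. By Proposition~\ref{prop:beta-closeable}, every $\beta$-shift is closeable with respect to $\Per(G_\beta,\lab_\beta)$, meaning every ergodic measure has a $\Per(G_\beta,\lab_\beta)$-closeable generic point; but in fact the proof of Proposition~\ref{prop:beta-closeable} shows more, namely that every point of $X_\beta$ — in particular every periodic point — is $\Per(G_\beta,\lab_\beta)$-closeable, by closing an initial block labelling a path from $v_0$ (possibly after modifying the last edge to a backward edge). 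Applying Lemma~\ref{lem:linkablity-criterion} with $T=\sigma|_{X_\beta}$ and $K=\Per(G_\beta,\lab_\beta)$ yields that $\Per(X_\beta)$ is strongly linkable, as claimed.

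The main obstacle is the verification that $K_0$ in Proposition~\ref{concatenation-coded} actually coincides with $\Per(G_\beta,\lab_\beta)$ — equivalently, that every loop in $G_\beta$ is forced through $v_0$ by the in-degree structure of the graph — and then the clean passage from periodic points \emph{presented by loops} to \emph{all} periodic points of the shift, which requires noting that the closeability argument of Proposition~\ref{prop:beta-closeable} applies pointwise (not merely to generic points of ergodic measures). Once these two points are in place, the result follows formally from the two cited propositions and Lemma~\ref{lem:linkablity-criterion}; there are no delicate estimates beyond those already packaged in Remark~\ref{linkable-close}.
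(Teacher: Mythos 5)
Your proposal is correct and follows essentially the same route as the paper: all loops in $G_\beta$ pass through $v_0$, so Proposition~\ref{concatenation-coded} gives strong linkability of $\Per(G_\beta,\lab_\beta)$, and then Proposition~\ref{prop:beta-closeable} together with Lemma~\ref{lem:linkablity-criterion} upgrades this to $\Per(X_\beta)$. Your explicit check that every periodic point (not merely a generic point of each ergodic measure) is $\Per(G_\beta,\lab_\beta)$-closeable, as the hypothesis of Lemma~\ref{lem:linkablity-criterion} actually requires, is a point the paper's two-line proof leaves implicit, and your in-degree argument for why every loop must visit $v_0$ is a correct justification of the paper's unproved opening assertion.
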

\begin{proof}
All closed paths of $G_\beta$ pass through $v_0$, hence Proposition \ref{concatenation-coded} applies and the set $\Per(G_\beta,\lab_\beta)$ is strongly linkable. By Proposition \ref{prop:beta-closeable} and Lemma \ref{lem:linkablity-criterion} $X_\beta$ is strongly linkable.
\end{proof}

\begin{proposition}\label{prop:S-gap-link}
Every $S$-gap shift $X_S$ is strongly linkable.
\end{proposition}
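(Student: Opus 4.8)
The plan is to mimic the strategy already used for $\beta$-shifts in Proposition~\ref{prop:beta-link}: establish strong linkability of a natural subset of periodic points coming from the coding graph $\Gamma_S$, then upgrade to all of $\Per(X_S)$ using the closeability result (Proposition~\ref{prop:S-gap-closeable}) together with the linkability criterion (Lemma~\ref{lem:linkablity-criterion}). The key structural observation is that in the graph $\Gamma_S=(V,E_S)$ every loop must return to the vertex $v_0$: indeed the only ``backward'' edges are those of the form $v_i\to v_0$ with $i\in S$, and from $v_0$ one can only move forward along $v_0\to v_1\to v_2\to\cdots$, so any closed path necessarily passes through $v_0$.

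First I would invoke Proposition~\ref{concatenation-coded}: since every loop in $\Gamma_S$ passes through the fixed vertex $v_0$, the set $\Per(\Gamma_S,\lab_S)$ of periodic points presented by loops through $v_0$ is strongly linkable (one simply concatenates loops based at $v_0$ freely, and each such concatenation is again a legal loop based at $v_0$, producing a periodic point of $X_S$ that shadows the prescribed orbit segments with the spacing coming from, say, a single edge $v_i\to v_0$ when we need to pad). Every periodic point of $X_S$ other than $0^\infty$ containing at least one symbol $1$ is in fact presented by a loop through $v_0$ (read off the block between two successive $1$'s), so $\Per(\Gamma_S,\lab_S)$ contains $\Per(X_S)\setminus\{0^\infty\}$ up to shifts; in any case it is a strongly linkable subset of $\Per(X_S)$.

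Next I would apply Proposition~\ref{prop:S-gap-closeable}, which tells us that every point of $X_S$ is closeable with respect to $\Per(X_S)\setminus\{0^\infty\}$, hence in particular every $x\in\Per(X_S)$ is $K$-closeable for the strongly linkable set $K=\Per(\Gamma_S,\lab_S)$ (since $K\supseteq\Per(X_S)\setminus\{0^\infty\}$ and closeability only gets easier when the target set grows). Then Lemma~\ref{lem:linkablity-criterion} immediately yields that $\Per(X_S)$ itself is strongly linkable, which is the claim.

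The only point requiring a little care — and what I expect to be the main obstacle, though a minor one — is the treatment of the fixed point $0^\infty$ and the degenerate case where $S$ is finite or bounded: one must check that $K=\Per(\Gamma_S,\lab_S)$ is genuinely strongly linkable even when $S$ does not contain arbitrarily large gaps, and that the loops based at $v_0$ supply orbit segments of all sufficiently large lengths $p_j$ divisible by the minimal periods of $y_1,y_2$ (this is where one uses that $S\neq\emptyset$ so that at least one backward edge $v_i\to v_0$ exists, giving a ``transition block'' $10^{n_1}$ of fixed length $n_1+1$ to insert between the two segments). Once this bookkeeping is in place, the argument is entirely parallel to Proposition~\ref{prop:beta-link}.
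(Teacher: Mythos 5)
Your proposal is correct and follows essentially the same route as the paper: the paper likewise notes that all loops of $\Gamma_S$ pass through $v_0$, applies Proposition~\ref{concatenation-coded} to get strong linkability of $\Per(\Gamma_S,\lab_S)=\Per(X_S)\setminus\{0^\infty\}$, and then concludes via Proposition~\ref{prop:S-gap-closeable} and Lemma~\ref{lem:linkablity-criterion}. The extra bookkeeping you flag about padding blocks and the fixed point $0^\infty$ is already absorbed into those cited statements.
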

\begin{proof}
By definition of $X_S$ it is easy to see that $\Per(X_S)\setminus\{0^\infty\}=\Per(\Gamma_S,\lab_S)$ and all closed paths in $\Gamma_S$ pass through $v_0$, so we may use Proposition \ref{concatenation-coded} to conclude that $\Per(X_S)\setminus\{0^\infty\}$ is strongly linkable. To finish the proof we apply
Proposition \ref{prop:S-gap-closeable} and Lemma \ref{lem:linkablity-criterion}.
\end{proof}

Recall that the \emph{convex hull}, $\conv(A)$, of a subset $A$ of a locally convex space $\M(X)$ is the smallest convex set containing $A$. Similarly, the \emph{closed convex hull} of $A$,  denoted $\cch(A)$, is the smallest closed convex set containing $A$. Furthermore, we have
\[
\conv(A)=
	\bigcup_{n=1}^\infty
	\Big\{\sum_{j=1}^n\lambda_j\mu_j\colon
		\lambda_j\in[0,1],\,\sum_{j=1}^n \lambda_j=1,\,\mu_j\in A\Big\}
\]
and $\overline{\conv(A)}=\cch(A)$, where $\overline{Y}$ denotes the weak$\ast$ closure of $Y$ (see \cite[Theorem 5.2(i)--(ii)]{Simon}).

Our main result in this section tells us that if $K$ is a linkable subset of $\Per(T)$, then
the set of CO-measures concentrated on points from $K$ is dense in its own closed convex hull.

\begin{theorem}\label{thm:linkable}
If $K\subset\Per(T)$ is linkable, then $\overline{\MTp(K)}=\cch(\MTp(K))$.
\end{theorem}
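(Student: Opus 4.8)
The plan is to prove the two inclusions separately. The inclusion $\overline{\MTp(K)}\subseteq\cch(\MTp(K))$ is immediate, since $\cch(\MTp(K))$ is by definition a closed (and convex) set containing $\MTp(K)$. For the reverse inclusion it is enough to show that $\overline{\MTp(K)}$ is convex: being then a closed convex set containing $\MTp(K)$, it must contain the smallest such set, which is $\cch(\MTp(K))$ by definition.

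The key step towards convexity is the following claim: if $y_1,y_2\in K$ and $\lambda\in[0,1]$, then $\lambda\gamma(y_1)+(1-\lambda)\gamma(y_2)\in\overline{\MTp(K)}$. To see this, fix $\eps>0$. By linkability of $K$ and the lemma following Definition~\ref{def:linkability-2} we may choose $z\in K$ and integers $p_1,p_2,q_1\le q_2$ satisfying the conditions of Definition~\ref{def:linkability-2} for $y_1,y_2,\lambda,\eps$ and, in addition, $T^{p_j}(y_j)=y_j$ for $j=1,2$. Remark~\ref{linkable-close}, in particular estimate~\eqref{inequality3}, then yields $\D(\Emp(z,q_2),\lambda\gamma(y_1)+(1-\lambda)\gamma(y_2))\le 3\eps$. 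Since $T^{q_2}(z)=z$, the number $q_2$ is a multiple of the minimal period of $z$, so $\Emp(z,q_2)=\gamma(z)\in\MTp(K)$. Letting $\eps\to0$ proves the claim.

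It remains to deduce convexity of $\overline{\MTp(K)}$. Let $\mu,\nu\in\overline{\MTp(K)}$ and $\lambda\in[0,1]$. Pick CO-measures $\mu_k,\nu_k\in\MTp(K)$ with $\mu_k\to\mu$ and $\nu_k\to\nu$. By the claim, $\lambda\mu_k+(1-\lambda)\nu_k\in\overline{\MTp(K)}$ for each $k$; since $\lambda\mu_k+(1-\lambda)\nu_k\to\lambda\mu+(1-\lambda)\nu$ and $\overline{\MTp(K)}$ is closed, we get $\lambda\mu+(1-\lambda)\nu\in\overline{\MTp(K)}$. Hence $\overline{\MTp(K)}$ is convex, and the proof is complete.

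I do not expect a genuine obstacle here: linkability (via Remark~\ref{linkable-close}) was designed precisely so that pairwise convex combinations of CO-measures on orbits in $K$ are approximable by such CO-measures, and the passage from this pairwise statement to full convexity is the routine density-plus-closedness argument above. Alternatively one can induct on the number of summands to show $\conv(\MTp(K))\subseteq\overline{\MTp(K)}$ and then take closures, using $\cch(A)=\overline{\conv(A)}$. The only points needing a word of care are that $\Emp(z,q_2)=\gamma(z)$ because $q_2$ is a period of $z$, and the preliminary use of the lemma after Definition~\ref{def:linkability-2} to arrange $T^{p_j}(y_j)=y_j$ so that Remark~\ref{linkable-close} applies verbatim.
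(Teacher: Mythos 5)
Your proof is correct and follows essentially the same route as the paper: in both arguments the heart of the matter is that linkability together with Remark~\ref{linkable-close} places every pairwise convex combination $\lambda\gamma(y_1)+(1-\lambda)\gamma(y_2)$ in $\overline{\MTp(K)}$. The only cosmetic difference is that the paper passes from pairwise to general convex combinations via an explicit inductive approximation (Lemma~\ref{convex-density}), whereas you observe directly that a closed set stable under pairwise convex combinations is convex and therefore contains $\cch(\MTp(K))$; both steps are routine and valid.
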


For the proof we will need the following lemma. 
The statement \emph{a set $A\subset B$ is dense in $B$} means that every point of $B$ is a limit of a sequence in $A$.

\begin{lemma}\label{convex-density}
Let $L\subset K\subset \M(X)$. If $L$ is dense in
$\{\lambda\mu_1+(1-\lambda)\mu_2\colon \lambda\in [0,1],\,\mu_1,\mu_2\in K\}$, then $L$ is dense in $\conv(K)$, hence
\[	
	\overline{L}=\overline{\conv(K)}=\cch(K).
\]
\end{lemma}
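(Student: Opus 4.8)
The plan is to reduce the density of $L$ in the full convex hull $\conv(K)$ to the given hypothesis about $L$ being dense in the set of two-fold convex combinations of elements of $K$. Write $C_n(K)=\{\sum_{j=1}^n\lambda_j\mu_j\colon \lambda_j\in[0,1],\ \sum_j\lambda_j=1,\ \mu_j\in K\}$, so that $\conv(K)=\bigcup_{n\ge 1}C_n(K)$; the hypothesis says $L$ is dense in $C_2(K)$, and trivially $L\supset$ (nothing is assumed about $L\subset K$), so it also suffices to note $C_1(K)=K\subset C_2(K)$. Since a countable union of sets each having $L$ in its closure still has $L$ in its closure, it is enough to prove by induction on $n$ that $L$ is dense in $C_n(K)$ for every $n\ge 1$.

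The base case $n=1$ follows from $C_1(K)\subset C_2(K)$ and the hypothesis. For the inductive step, suppose $L$ is dense in $C_{n-1}(K)$ and take $\nu=\sum_{j=1}^n\lambda_j\mu_j\in C_n(K)$ with all $\mu_j\in K$. If some $\lambda_j=1$ this is in $C_1(K)$ and we are done, so assume $\Lambda:=\sum_{j=1}^{n-1}\lambda_j\in(0,1)$ (after reordering). Then $\nu=\Lambda\,\eta+(1-\Lambda)\mu_n$ where $\eta=\sum_{j=1}^{n-1}(\lambda_j/\Lambda)\mu_j\in C_{n-1}(K)$. Given $\varepsilon>0$, by the inductive hypothesis pick $\ell\in L$ with $\D(\ell,\eta)<\varepsilon$; since $\ell\in L\subset \M(X)$ and $\mu_n\in K\subset\M(X)$, the combination $\Lambda\ell+(1-\Lambda)\mu_n$ lies in $C_2(K)$ only if $\ell\in K$ — which need not hold. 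To repair this, instead first approximate the two-fold combination directly: the point $\Lambda\eta+(1-\Lambda)\mu_n$ is not yet in $C_2(K)$ either, but it is a limit of points $\Lambda\ell+(1-\Lambda)\mu_n$ with $\ell\in L$, and by Lemma~\ref{lem:aux}\eqref{lem:affine-close} (or rather the Lipschitz estimate $\D(\Lambda\ell+(1-\Lambda)\mu_n,\Lambda\eta+(1-\Lambda)\mu_n)\le\D(\ell,\eta)$, which follows from the definition of $\D$ exactly as in Lemma~\ref{lem:aux}\eqref{lem:uniform-close} with $\alpha=\beta=\Lambda$) we get $\D(\Lambda\ell+(1-\Lambda)\mu_n,\nu)<\varepsilon$. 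So it suffices to show each $\Lambda\ell+(1-\Lambda)\mu_n$ (with $\ell\in L$) is itself approximable by $L$. Now here we do use the hypothesis: writing $\ell$ as a limit of elements of $L$ is not enough, but we may approximate $\ell$ itself by the inductive hypothesis — this is circular.

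Let me restructure: the cleanest route is to prove directly by induction that $C_n(K)\subset\overline{L}$, using at each step only that $C_2(K)\subset\overline L$ and that convex combination is jointly continuous. Indeed, for $\nu=\Lambda\eta+(1-\Lambda)\mu_n$ with $\eta\in C_{n-1}(K)\subset\overline L$ (inductive hypothesis) and $\mu_n\in K\subset\overline L$, both $\eta$ and $\mu_n$ lie in $\overline L$; approximate $\eta$ by $\ell_1\in L$ and $\mu_n$ by $\ell_2\in L$ with error $<\varepsilon$; then $\Lambda\ell_1+(1-\Lambda)\ell_2\in C_2(L)\subset C_2(K)\subset\overline L$, and by the joint-continuity estimate $\D(\Lambda\ell_1+(1-\Lambda)\ell_2,\nu)\le\varepsilon$ (Lemma~\ref{lem:aux}\eqref{lem:uniform-close}). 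Hence $\nu\in\overline{\overline L}=\overline L$. This closes the induction, giving $\conv(K)=\bigcup_n C_n(K)\subset\overline L$. Conversely $L\subset K\subset\conv(K)$, so $\overline L\subset\overline{\conv(K)}$; combined with $\conv(K)\subset\overline L$ we get $\overline L=\overline{\conv(K)}=\cch(K)$, using the identity $\overline{\conv(K)}=\cch(K)$ recalled before the lemma. The main (and only) subtlety is the harmless-looking point that $C_2(L)\subset C_2(K)$ requires $L\subset K$, which is exactly the standing hypothesis of the lemma; everything else is the Lipschitz property of the Prokhorov-type metric $\D$ under convex combinations, already packaged in Lemma~\ref{lem:aux}.
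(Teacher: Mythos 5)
Your final argument is correct and takes essentially the same route as the paper: an induction that peels one term off the convex combination at a time, feeds the previous approximant (which lies in $L\subset K$) back into the two-fold density hypothesis, and controls the error via the Lipschitz behaviour of $\D$ under convex combinations (Lemma~\ref{lem:aux}); the paper merely packages this as a single telescoping chain of points $\nu_j\in L$ with errors $\eps/2^j$ rather than passing to closures. One remark: the obstruction you raise in your first attempt --- that $\Lambda\ell+(1-\Lambda)\mu_n$ might fail to lie in $C_2(K)$ because $\ell$ need not belong to $K$ --- is illusory, since $\ell\in L\subset K$ by the standing hypothesis (as you yourself note at the end), so that shorter route, which is exactly the paper's, already works.
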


\begin{proof}
We assume that for every $\mu_1,\mu_2\in K$, $\eps>0$, and $\lambda\in [0,1]$ there exists $\nu\in L$
such that $\D(\lambda\mu_1+(1-\lambda)\mu_2,\nu)<\eps$. Given
$n\ge 1$ we fix $\mu_1,\ldots,\mu_n\in K$, $\eps>0$, and $\lambda_1,\ldots,\lambda_n\in [0,1]$ such that $\lambda_1+\ldots+\lambda_n=1$.
We search for a measure $\nu\in L$ such that
\[
\D\Big(\sum_{j=1}^n\lambda_j\mu_j,\nu\Big)<\varepsilon.
\]
To this end set $\Lambda_p=\lambda_1+\ldots+\lambda_p$ for $p=1,\ldots,n-1$. We will inductively define measures $\nu_0,\ldots,\nu_{n-1}$ such that $\nu_{n-1}$ is the measure we are looking for.
Let $\nu_0=\mu_1$. Assume that we have already defined measures $\nu_0,\ldots,\nu_{j-1}$ for some $0\le j \le n$. Using our assumption we may find measure $\nu_{j+1}$ such that
\[
\D\Big(\frac{\Lambda_j}{\Lambda_{j+1}}\nu_{j-1}+\frac{\lambda_{j+1}}{\Lambda_{j+1}}\mu_{j+1},\nu_j\Big)<\frac{\eps}{2^j}.
\]
Then
\[
\D\Big(\sum_{j=1}^n\lambda_j\mu_j,\nu_{n-1}\Big)
<\frac{\eps}{2}+\frac{\eps}{4}+\ldots+\frac{\eps}{2^{n-1}}<\eps,
\]
which completes the proof.
\end{proof}

We are now in position to prove Theorem \ref{thm:linkable}.
\begin{proof}[Proof of Theorem \ref{thm:linkable}]
Take $\gamma(y_1),\gamma(y_2)\in\MTp(K)$ supported on orbits of $y_1,y_2\in K$, respectively.
Fix $\lambda\in [0,1]$ and $\eps>0$.
By Lemma \ref{convex-density} it is enough to find $z\in K$ such that
\[
	\D\big(\gamma(z),\lambda \gamma(y_1)+(1-\lambda)\gamma(y_2)\big)
	<\eps.
\]
But the point $z$ provided by linkability is exactly such a point by Remark \ref{linkable-close}.
\end{proof}


\section{Generic points}\label{sec:generic}

In general, a non-ergodic invariant measure may have no generic points at all. Actually, there exists even an example of a mixing dynamical system with exactly two ergodic measures such that every non-ergodic measure fails to have a generic point (see Proposition \ref{prop:generic-example}). We show that this is not the case when a dynamical system is closeable with respect to a linkable set of periodic points.
\begin{lemma}\label{lem:per-density}
If $T\colon X\to X$ is closeable with respect to a linkable set $K\subset\Per(T)$, then $\cup_{n=0}^\infty T^n(K)\subset\Per(T)$ is dense in $\CM(T)$.
\end{lemma}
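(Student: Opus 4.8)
The plan is to show that an arbitrary point $x$ of the measure center $\CM(T)$ can be approximated by periodic points lying in $\bigcup_{n\ge 0}T^n(K)$. Since $\CM(T)$ is characterized by the property that every open neighborhood of $x$ carries positive mass for some invariant measure, I would first fix $\eps>0$ and pick $\mu\in\MT(X)$ with $\mu(B(x,\eps))>0$, where $B(x,\eps)$ is the open $\eps$-ball around $x$. Using the ergodic decomposition of $\mu$ (valid in the Polish setting), there is an ergodic component $\nu$ with $\nu(B(x,\eps))>0$ as well. By the Birkhoff ergodic theorem, $\nu$-almost every point $w$ satisfies $\Emp(w,n)(B(x,\eps))\to\nu(B(x,\eps))>0$; in particular, for such $w$ the orbit of $w$ visits $B(x,\eps)$ with positive frequency, so there is some iterate $T^m(w)$ with $\rho(T^m(w),x)<\eps$.

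The second step is to produce a periodic point in $K$ whose orbit passes close to $x$. For this I would use that $(X,T)$ is $K$-closeable, which in particular implies every ergodic measure is $K$-approximable (Theorem~\ref{thm:closeable}): the CO-measures $\gamma(z)$ with $z\in K$ are dense in $\MTe(X)$. Apply this to the ergodic measure $\nu$ above to find $z\in K$ with $\D(\gamma(z),\nu)$ small. Since $\gamma(z)$ is then close to $\nu$ in the Prokhorov metric, and $\nu(B(x,\eps))>0$, for an appropriately small choice of $\D(\gamma(z),\nu)$ we get $\gamma(z)(B(x,\eps/2))>0$, which forces some point on the (periodic) orbit of $z$ to lie in $B(x,\eps/2)\subset B(x,\eps)$. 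That point is of the form $T^j(z)$ for some $j\ge 0$, hence belongs to $\bigcup_{n\ge 0}T^n(K)$, and it is within $\eps$ of $x$. As $\eps>0$ and $x\in\CM(T)$ were arbitrary, this shows $\bigcup_{n\ge 0}T^n(K)$ is dense in $\CM(T)$.

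I should double-check the quantitative link between $\D(\gamma(z),\nu)<\delta$ and the conclusion $\gamma(z)(B(x,\eps/2))>0$: from the definition of $\D$, $\nu(A)\le\gamma(z)(A^\delta)+\delta$ for every Borel set $A$, so taking $A=\overline{B(x,\eps/4)}$ (so that $A^\delta\subset B(x,\eps/2)$ once $\delta<\eps/4$) gives $\gamma(z)(B(x,\eps/2))\ge\nu(\overline{B(x,\eps/4)})-\delta$, and it suffices to have picked the radius $\eps/4$ small enough and $\delta$ small enough that $\nu(\overline{B(x,\eps/4)})>\delta$ — this is possible because $\nu(B(x,\eps))>0$ means $\nu$ gives positive mass to balls around $x$ of all sufficiently small radii, or one can shrink $\eps$ from the outset so that $\nu(B(x,\eps))>0$ with $\eps$ as small as we please.

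The main obstacle, and the point requiring genuine care rather than routine bookkeeping, is the very first step: extracting from $x\in\CM(T)$ an \emph{ergodic} measure that still sees every neighborhood of $x$. The measure center is defined via arbitrary invariant measures, and passing to an ergodic component preserves positivity on a \emph{given} open set but one must be slightly careful that this works uniformly as the neighborhoods shrink — handled by fixing $\eps$ first and only then decomposing, as above. Everything else is a direct application of the already-established fact that $K$-closeability yields density of $\MTp(K)$ in $\MTe(X)$ (Theorem~\ref{thm:closeable}) together with the elementary relationship between the Prokhorov metric and the masses of balls; the linkability hypothesis is not even needed for this particular lemma, though it is available.
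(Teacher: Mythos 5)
Your proof is correct, but it follows a genuinely different route from the paper's. The paper takes an open set $U$ with $U\cap\CM(T)\neq\emptyset$, picks $\mu\in\MT(X)$ with $\mu(U)>0$, and approximates this (possibly non-ergodic) $\mu$ by CO-measures $\gamma(x_n)\in\MTp(K)$ using \emph{both} Theorem~\ref{thm:closeable} and Theorem~\ref{thm:linkable}; the Portmanteau inequality $\mu(U)\le\liminf_n\gamma(x_n)(U)$ for open sets then forces the orbit of $x_n$ to enter $U$. You instead pass to an ergodic component $\nu$ of $\mu$ via the ergodic decomposition (valid on a Polish space), which lets you invoke only Theorem~\ref{thm:closeable} (equivalently, $K$-approximability) and dispense with linkability altogether --- a real weakening of the hypotheses for this particular lemma, which you correctly point out; your hand-made Prokhorov estimate with closed balls plays exactly the role of the paper's Portmanteau step. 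Two small remarks: your parenthetical claim that $\nu(B(x,\eps))>0$ forces $\nu$ to give positive mass to all sufficiently small balls around $x$ is false as stated ($\nu$ could live near the boundary of the ball), but your alternative --- fixing the target radius first, then choosing $\mu$, then decomposing, then choosing $\delta$ small compared with $\nu$ of that fixed ball --- is exactly the right fix and is what your argument actually uses; and the Birkhoff/generic-point discussion in your first paragraph is not needed, since positivity of $\gamma(z)$ on an open set already locates an atom $T^j(z)$ of the periodic orbit inside it.
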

\begin{proof}
Take any open set $U\subset X$ such that $U\cap \CM(T)\neq\emptyset$ or, equivalently, such that $\mu(U)>0$ for some $\mu\in\MT(X)$. By Theorems \ref{thm:closeable} and \ref{thm:linkable} there is a sequence $(\gamma(x_n))_{n=1}^\infty\subset\MTp(K)$ such that $\gamma(x_n)\to\mu$ as $n\to\infty$. A well known property of weak$\ast$ convergence (see \cite[p.149, Remark (3)]{Walters})
yields
\[
0<\mu(U)\le\liminf_{n\to\infty}\gamma(x_n)(U).
\]
This means that an orbit of $x_n$ passes through $U$ for all sufficiently big $n$, that is $\cup_{n=0}^\infty T^n(K)\cap U\neq\emptyset$.
\end{proof}

\begin{theorem}\label{thm:generic}
If $T\colon X\to X$ is closeable with respect to a linkable set $K\subset\Per(T)$, then for every $\mu\in\MT(X)$ the set of $\mu$-generic points is dense in $\CM(T)$.
\end{theorem}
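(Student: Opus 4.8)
The plan is to build, for a given $\mu\in\MT(X)$ and a given target open set $U$ meeting $\CM(T)$, an explicit $\mu$-generic point lying in $U$. First I would invoke Lemma~\ref{lem:per-density} (or rather its proof) to realize $\mu$ as a weak$\ast$ limit $\gamma(x_k)\to\mu$ of CO-measures supported on orbits of points $x_k\in K$; by refining the argument there one may in addition arrange that the orbit of $x_1$ meets $U$, so that the point I eventually construct will lie in $U$ after passing to a suitable forward iterate (here I use that $K$ and $\CM(T)$ are $T$-invariant, and that a point all of whose empirical measures converge to $\mu$ has the same property along its whole orbit). The heart of the construction is then to concatenate longer and longer pieces of the orbits of $x_1,x_2,x_3,\ldots$, using linkability of $K$ to glue consecutive pieces together with only a small controlled overhead, so that the empirical measures of the concatenated point are forced to converge to $\mu$.

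The key steps, in order, would be: (1) Fix a summable sequence $\eps_k\downarrow 0$ and, using that $\gamma(x_k)\to\mu$, choose for each $k$ a large multiple $n_k$ of $\per(x_k)$ so that $\D(\Emp(x_k,m),\mu)<\eps_k$ for all $m\ge n_k$ (this is possible since $x_k$ is periodic, hence $\Emp(x_k,m)\to\gamma(x_k)$). (2) Apply linkability repeatedly: having already produced a finite ``word'' of total length $L_{k-1}$ whose empirical measure is $\eps_{k-1}$-close to $\mu$, link it to a block of the orbit of $x_k$ of length $\approx n_k$ (chosen so that it dominates $L_{k-1}$, forcing the running average to be governed by $\mu$), obtaining a point $z_k\in K$ with the estimates of Definition~\ref{def:linkability-2}. (3) Use Lemma~\ref{lem:aux}, and in particular items \eqref{lem:initial-close}, \eqref{lem:convex-comb} and \eqref{lem:uniform-close}, together with Remark~\ref{linkable-close}, to show that along the nested sequence of initial segments determined by this construction the empirical measures are within $O(\eps_k)$ of $\mu$; taking a limit point $z$ of the $z_k$ in $X$ (using completeness of $X$ and the fact that the $z_k$ agree on longer and longer initial Bowen-ball neighborhoods of the constructed orbit, so they form a Cauchy sequence) gives a point $z$ whose empirical measures along those segments converge to $\mu$. (4) Finally, promote ``convergence along a subsequence of times'' to genuine genericity: because each block added at stage $k$ is long relative to everything before it and has empirical measure close to $\mu$, Lemma~\ref{lem:aux}\eqref{lem:initial-close} controls the empirical measure at \emph{all} intermediate times $n$ between the marked times, so $V_T(z)=\{\mu\}$; then replace $z$ by the appropriate forward iterate $T^j(z)$ landing in $U$.

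\textbf{Main obstacle.} The routine-but-delicate part is step (3)–(4): organizing the bookkeeping so that the accumulated gluing errors and length overheads (each of size controlled by $\eps_k$, and each compounding as in the telescoping estimate of Lemma~\ref{convex-density}) remain summable, and simultaneously verifying that the empirical measures at the ``bad'' times —those falling in the middle of a long $x_k$-block or in the short linking gap— do not drift away from $\mu$. The short gaps are handled by the $q_i\le(1+\eps_i)p_i$ condition in the definition of linkability (the gap is a vanishing fraction of the block), and the interior-of-block times are handled because $\Emp(x_k,m)$ is already uniformly close to $\mu$ for all $m\ge n_k$; but checking that these two mechanisms interlock cleanly across the recursion, and that the limit point $z$ actually exists in $X$ (one must check the $z_k$ really are eventually constant on any fixed coordinate, i.e.\ Cauchy, which uses that the $(p,\eps)$-tracing in linkability pins down more and more initial coordinates as $\eps\to 0$), is where the real care is needed. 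I would expect no conceptual difficulty beyond what is already packaged in Theorems~\ref{thm:closeable} and~\ref{thm:linkable} and Remark~\ref{linkable-close}; it is essentially Sigmund's classical construction of generic points for specification systems, carried out inside the linkable set $K$ rather than on all of $\Per(T)$.
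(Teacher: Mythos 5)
Your proposal is correct and follows essentially the same route as the paper's proof: an inductive application of linkability inside $K$ in which each new block of the orbit of $x_k$ dominates the accumulated length $L_{k-1}$, the tracing errors $\eps_k$ are chosen summable so that the points $z_k$ form a Cauchy sequence converging to the desired generic point (anchored near a point of $K$ inside $U$), and the empirical measures at all intermediate times are controlled via Lemma~\ref{lem:aux} and Remark~\ref{linkable-close}. The one place where your bookkeeping is slightly optimistic is the interior-of-block times $m<n_k$, where one needs \emph{both} that $\Emp(x_k,m)$ is close to $\gamma(x_k)$ once $m$ is a moderate multiple of $\per(x_k)$ \emph{and} that $\per(x_k)$ is negligible compared with $L_{k-1}$ (the paper enforces this through the divisibility requirements on $p_1^{(n)}$ and $p_2^{(n)}$), but this is exactly the routine care you already flag and fits within your framework.
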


\begin{proof} Pick any $\mu\in\MT(X)$ and an open set $U\subset X$ such that $U\cap \CM(T)\neq\emptyset$.
By Theorems \ref{thm:closeable} and \ref{thm:linkable} there is a sequence $(x_n)_{n=1}^\infty\subset K$ such that $\mu_n=\gamma(x_n)\to\mu$ as $n\to\infty$. Using Lemma \ref{lem:per-density} we get that $K$ is dense in $\CM(T)$. Therefore we can find a point $x_0\in K$ and $0<\eps_0<1$ such that the closed ball $\overline{B}(x_0,\eps_0)$ is contained in $U$.
Let $(\eps_n)_{n=1}^\infty$ be any monotonically decreasing sequence of positive reals with $\eps_1<\eps_0$ and $\eps_n\to 0$ as $n\to\infty$.

Without loss of generality we may assume that for each $n\in\N$ we have
\begin{equation}\label{mn}
\D(\mu,\mu_n)\le \eps_n\quad\text{and}\quad\D(\mu_n,\mu_{n+1})\le \eps_n.
\end{equation}
Let $M_n$ denote the minimal period of a point $x_n$ ($n\in\N_0$).

We will inductively construct points $z_0,z_1,\ldots \in K$ and auxiliary sequences $(p^{(n)}_1,p^{(n)}_2)_{n=0}^\infty$ and $(q^{(n)}_1,q^{(n)}_2)_{n=0}^\infty$ in $\N\times\N$.

Set $z_0=x_0$, $p^{(0)}_1=q^{(0)}_1=p^{(0)}_2=M_0$, $q^{(0)}_2=2M_0$. Assume that we have defined $z_0,z_1,\ldots,z_{n-1}\in K$, $(p^{(j)}_1,p^{(j)}_2)_{j=0}^{n-1}$ and $(q^{(j)}_1,q^{(j)}_2)_{j=0}^{n-1}$ for some $n\in\N$.
Let $\eps<2^{-n}\eps_n$ and $\lambda\in[0,1]$ be such that
\[
\vert \lambda-\frac{p_1^{(n)}}{p_1^{(n)}+p_2^{(n)}}
\vert<\eps
\]
implies
\begin{equation}\label{eqn:p}
\frac{p_1^{(n)}(1+2^{-n}\eps_n)+2^nM_n}{p_2^{(n)}}\le 2^{-n}.
\end{equation}
Let $N_n$ be some multiple of $2^n\cdot M_n$ and $q^{(n-1)}_2$.
For example, let \[
N_n=2^{2n}\cdot M_n\cdot N_n \cdot q^{(n-1)}_2.
\]
We use the linking property to find $z_n\in K$ and integers $p_1^{(n)},p_2^{(n)},q^{(n)}_1 \le q^{(n)}_2$ for $y_1=z_{n-1}$, $y_2=x_n$ and $\lambda,\eps$ as above so that Definition \ref{def:linkability} is fulfilled. In particular, we have $\gamma(z_n)=\Emp(z_n,q^{(n)}_2)$. We assume that $N_n$ divides $p_j^{(n)}$ for $j=1,2$. It implies
$q_2^{(n-1)}$ divides $p_1^{(n)}$ and $M_n$ divides $p_2^{(n)}$, that is
\begin{equation}\label{eqn:multp}
p_1^{(n)}=k_n\cdot q_2^{(n-1)}\text{ for some }k_n\in\N ,\quad  p_2^{(n)}=l_n\cdot M_n\text{ for some }l_n\in\N.
\end{equation}
This completes the induction step and the whole construction.

In what follows we write
\begin{gather*}
r^{(n)}=q_2^{(n)}-q_1^{(n)},\quad y_n=T^{q_1^{(n)}}(z_n),\\
\nu_n=\Emp(y_n,r^{(n)})=\Emp\Big(T^{q_1^{(n)}}(z_n),q_2^{(n)}-q_1^{(n)}\Big).
\end{gather*}
With this notation we have
\begin{subequations}
\begin{align}
p_1^{(n)}&\le q_1^{(n)} \le (1+2^{-n}\eps_n)p_1^{(n)},\label{db1}\\
p_2^{(n)}&\le r^{(n)} \le (1+2^{-n}\eps_n)p_2^{(n)},\label{db2}\\
z_n&\in\db(z_{n-1},p_1^{(n)},2^{-n}\eps_n),\label{db3}\\
y_n&\in\db(x_{n},p_2^{(n)},2^{-n}\eps_n).\label{db4}
\end{align}
\end{subequations}

Note that by repeated application of \eqref{db3} for any $n=0,1,\ldots$ and $k\in\N$ we have
\[\begin{split}
\max_{j=0,1,\ldots,q_2^{(n)}-1}\rho(T^j(z_n),T^j(z_{n+k}))
&\le \max_{j=0,1,\ldots,q_2^{(n)}-1} \sum_{i=1}^k \rho(T^j(z_{n+i-1}),T^j(z_{n+i}))\\
&\le \frac{1}{2^n} \sum_{i=1}^k \frac{\eps_{n+i}}{2^i}< \frac{\eps_{n+1}}{2^n}.
\end{split}\]
It follows that $(z_n)_{n=0}^\infty$ is a Cauchy sequence, hence it converges to some $\bar{z}\in \overline{B}(z_0,\eps_0)$.
Moreover, $\bar{z}\in\db(z_n,q^{(n)}_2,2^{-n}\eps_{n+1})$. Therefore
\begin{equation*}
\D(\Emp(\bar{z},j),\Emp(z_n,j))\le 2^{-n}\eps_{n+1} \text{ for }j=0,1,\ldots,q_2^{(n)}-1.
\end{equation*}
We claim that $\bar{z}$ is a generic point for $\mu$. To this end we must show that $\Emp(\bar{z},n)\to \mu$ as $n\to\infty$.
Let
\[
\Delta=
\N_0\setminus \Big(\bigcup_{n=0}^\infty \big((p_1^{(n)},q_1^{(n)})\cup (p_2^{(n)},q_2^{(n)})\big)\Big),
\]
where $(a,b)$ denotes an open interval and we agree that $(a,b)=\emptyset$ if $a=b$.
It is easy to see that
\[
d(\Delta)=\lim_{n\to\infty}\frac{\lvert\Delta\cap\{0,1,\ldots,n-1\}\rvert}{n}=1
\]
and $\bar{z}$ is a generic point for $\mu$ if, and only if, $\mu$ is the unique accumulation point of the sequence
$\{\Emp(\bar{z},n)\colon n\in\Delta\}$.
To show that the latter condition holds it suffices to prove that:
\begin{gather}
\lim_{n\to\infty}\Emp(z_n,q_2^{(n)})=\lim_{n\to\infty}\gamma(z_n)=\mu,\label{lim:qn}\\
\D(\Emp(z_n,q_2^{(n-1)}+j),\gamma(z_n))\le E(n),\quad j=1,\ldots,p_1^{(n)}-q_2^{(n-1)};\label{lim:qnj}\\
\D(\Emp(z_n,q_1^{(n)}+j),\gamma(z_n))\le E(n),\quad j=1,\ldots,p_2^{(n)};\label{lim:qnjj}
\end{gather}
for some function $E\colon \N\to (0,\infty)$ with $E(n)\to 0$ as $n\to\infty$.

First, we prove \eqref{lim:qn}.
Note that inequalities \eqref{eqn:p} and conditions (\ref{db1}--\ref{db2}) imply that for every $n\in\N$
\begin{equation}\label{eqn:q}
\frac{q_1^{(n)}}{q_2^{(n)}}<\frac{q_1^{(n)}+ 2^{n}M_n}{q_2^{(n)}}\le \frac{p_1^{(n)}(1+2^{-n}\eps_n)+2^nM_n}{p_2^{(n)}}\le 2^{-n}.
\end{equation}
It follows from \eqref{eqn:q} and Lemma \ref{lem:aux}\eqref{lem:initial-close} that
\begin{equation}\label{ineq:1}
\D\big(\Emp(z_n,q_2^{(n)}),\Emp(y_n,r^{(n)})\big)=\D(\gamma(z_n),\nu_n)\le 2^{-n}.
\end{equation}
From \eqref{db2} and \eqref{db4} (see Remark \ref{linkable-close}) we deduce
\begin{equation}\label{ineq:2}
\D\big(\Emp(y_n,r^{(n)}),\Emp(x_n,p_2^{(n)})\big)
=\D\big(\nu_n,\mu_n\big)\le 2^{-n}.
\end{equation}
Then \eqref{ineq:1} combined with \eqref{ineq:2} gives for every $n\in\N$
\begin{equation}\label{lim:final}
\D(\gamma(z_n),\mu_n)\le 2^{-n+1}.
\end{equation}
We have proved that \eqref{lim:qn}  holds.

We proceed to show \eqref{lim:qnj}. Recall that we consider $\Emp(z_n,q_2^{(n-1)}+j)$ for $j=1,2,\ldots,p_1^{(n)}-q^{(n-1)}_2$.
On account of \eqref{db3} for $j=1,2,\ldots,p_1^{(n)}$ we have
\[
\D\big(\Emp(z_n,q_2^{(n-1)}+j),\Emp(z_{n-1},q_2^{(n-1)}+j) \big)\le 2^{-n}\eps_n.
\]
By \eqref{eqn:multp} we may write $q_2^{(n-1)}+j=kq_2^{(n-1)}+s$, where $k\in\{1,\ldots,k_n\}$ and $0\le s< q_2^{(n-1)}$.
If $s<q_1^{(n-1)}$, then \eqref{eqn:q} and Lemma \ref{lem:aux}\eqref{lem:initial-close} give
\begin{multline}\label{eqn:prelim1}
\D\big(\Emp(z_{n-1},kq_2^{(n-1)}+s),\Emp(z_{n-1},kq_2^{(n-1)}) \big)=\\
=\D\big(\Emp(z_{n-1},kq_2^{(n-1)}+s),\gamma(z_{n-1})\big)< \frac{q_1^{(n-1)}}{q_2^{(n-1)}}<2^{-n+1}.
\end{multline}
Suppose $q_1^{(n-1)}\le s \le p_2^{(n-1)}$. Using \eqref{eqn:multp} we may find $t=0,\ldots,l_{n-1}$ and $0\le i <M_{n-1}$ such that $g=kq_2^{(n-1)}+q_1^{(n-1)}$, $m=tM_{n-1}$, and $i=s-q_1^{(n-1)}-m$.
From \eqref{eqn:q} and Lemma \ref{lem:aux}\eqref{lem:initial-close} we obtain
\begin{equation}\label{eqn:prelim2}
\D\big(\Emp(z_{n-1},kq_2^{(n-1)}+s),\Emp(z_{n-1},g+m) \big)\le \frac{i}{q_2^{(n-1)}}<2^{-n+1}.
\end{equation}
Note that $T^g(z_{n-1})=y_{n-1}$. Hence $\Emp(T^g(z_{n-1}),M)=\Emp(y_{n-1},M)$. We can regard $\Emp(z_{n-1},g+m)$ as a convex combination of the measures $\Emp(z_{n-1},g)$ and $\Emp(T^g(z_{n-1}),m)$. Writing $\alpha=g/(g+m)$ leads to
\begin{equation}\label{eqn:affine}
 \Emp(z_{n-1},g+m)= \alpha\Emp(z_{n-1},g)+(1-\alpha)\Emp(y_{n-1},m).
\end{equation}
Since $g=kq_2^{(n-1)}+q_1^{(n-1)}$ and $q_2^{(n-1)}$ is a period of $z_{n-1}$, we conclude from Lemma \ref{lem:aux}\eqref{lem:initial-close} and \eqref{eqn:q} that
\begin{equation}\label{eqn:g}
\D(\Emp(z_{n-1},g),\gamma(z_{n-1}))\le \frac{q_1^{(n-1)}}{q_2^{(n-1)}}\le 2^{-n+1}.
\end{equation}
Definition of $z_{n-1}$ (see \eqref{db4} and Remark \ref{linkable-close}) yields
\begin{equation}\label{eqn:m}
\D(\Emp(y_{n-1},m),\gamma(x_{n-1}))\le \frac{\eps_{n-1}}{2^{n-1}}.
\end{equation}
Combining \eqref{eqn:affine}, \eqref{eqn:g}, \eqref{eqn:m} and Lemma \ref{lem:aux}\eqref{lem:affine-close} we deduce
\begin{equation*}
\D(\Emp(z_{n-1},g+m),\alpha\gamma(z_{n-1})+(1-\alpha)\gamma(x_{n-1}))\le  2^{-n+1}.
\end{equation*}
From this, \eqref{lim:final} and Lemma \ref{lem:aux}\eqref{lem:affine-close} we get
\begin{equation}\label{eqn:final}
\begin{split}
\D\Big(&\Emp(z_{n-1},g+m),\gamma(z_{n-1})\Big)\\
&\le \D\Big(\Emp(z_{n-1},g+m),\alpha\gamma(z_{n-1})+(1-\alpha)\gamma(x_{n-1})\Big)+\\
&\phantom{\le}
+\D\Big(\alpha\gamma(z_{n-1})+(1-\alpha)\gamma(x_{n-1}),\gamma(z_{n-1})\Big)\\
&\le 2^{-n+1} + 2^{-n+2}.
\end{split}
\end{equation}
We can now combine \eqref{eqn:prelim1}, \eqref{eqn:prelim2}, and \eqref{eqn:final} to assert that
$E(n)=2^{-n+3}$ is a function $E\colon\N\to (0,\infty)$ we wanted and this finishes the proof of \eqref{lim:qnj}.

The proof of \eqref{lim:qnjj} is very similar and we will only sketch it.
Recall that we consider $\Emp(z_n,q_1^{(n)}+j)$ for $j=1,2,\ldots,p_2^{(n)}$. Again
the difference between $\Emp(z_n,q_1^{(n)}+j)$ and $\Emp(z_n,q_1^{(n)}+j+i)$ is negligible provided $0\le i< M_n$.
Hence we may consider only $j=\ell\cdot M_n$ for $\ell=1,\ldots, \ell_n$.
By our construction, $\Emp(z_n,q_1^{(n)})$ is close to $\gamma(z_{n-1})$ and $\Emp(y_n,\ell M_n)$ is close to $\gamma(x_n)$.
Moreover, by \eqref{lim:qn} and \eqref{mn} we see that $\D(\gamma(z_{n-1}),\gamma(x_n))$ tends to $0$. But
$\Emp(z_n,q_1^{(n)}+\ell M_n),\gamma(x_n))$ is a convex combination of $\gamma(z_{n-1})$ and $\gamma(x_n)$ and from the above $\D(\Emp(z_n,q_1^{(n)}+\ell M_n),\gamma(x_n))$ must be negligible as $n$ becomes large.
\end{proof}

The following result generalizes \cite[Proposition 21.14]{DGS}. By a \emph{continuum} we mean a \emph{nonempty, compact, and connected set}.

\begin{corollary}\label{cor:V}
If $T\colon X\to X$ is closeable with respect to a linkable set $K\subset\Per(T)$, then for every
continuum $V\subset \MT(X)$ there is a point $x\in X$ such that $V_T(x)=V$.
\end{corollary}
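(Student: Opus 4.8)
The plan is to follow the classical argument (see \cite[Proposition 21.14]{DGS}) but using Theorem~\ref{thm:generic} in place of the specification property. First I would reduce to the case of a finite set of ``vertices'' of $V$: since $V\subset\MT(X)$ is a continuum and $\MT(X)$ is a compact metric space, for every $k$ one can choose a finite $2^{-k}$-dense subset $\{\mu_{k,1},\ldots,\mu_{k,m_k}\}$ of $V$, and moreover, using connectedness, one can arrange these finite sets so that consecutive ones interleave: every $\mu_{k,i}$ is $2^{-k}$-close to some $\mu_{k+1,j}$ and vice versa. Concatenating over all $k$ one obtains a single sequence $(\mu_n)_{n=1}^\infty$ in $V$ which is $2^{-k}$-dense in $V$ for all $n$ past some index depending on $k$, with $\D(\mu_n,\mu_{n+1})\to 0$, and such that every point of $V$ is an accumulation point of $(\mu_n)$.

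Next, by Theorem~\ref{thm:closeable} and Theorem~\ref{thm:linkable} the set $\MTp(K)$ is dense in $\cch(\MTe(X))=\MT(X)$, so each $\mu_n$ can be replaced by a CO-measure $\gamma(x_n)$ with $x_n\in K$ and $\D(\mu_n,\gamma(x_n))$ as small as we wish; after this replacement the sequence $(\gamma(x_n))$ still has $\overline{\{\gamma(x_n):n\ge 1\}}\supset V$ in the sense that its accumulation set is exactly $V$ (here I use that $V$ is closed), and still $\D(\gamma(x_n),\gamma(x_{n+1}))\to 0$. Now I would run essentially the same inductive construction as in the proof of Theorem~\ref{thm:generic}: using linkability of $K$, build points $z_n\in K$ with periods $q_2^{(n)}$ such that $z_{n+1}$ shadows $z_n$ for a very long initial time $q_2^{(n)}$ and then shadows $x_{n+1}$ for a long block, with the block lengths $p_1^{(n)},p_2^{(n)}$ chosen (divisible by large $N_n$) so that the ``transition gaps'' are relatively negligible; the sequence $(z_n)$ is Cauchy and converges to some $\bar z\in X$. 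The bookkeeping (the estimates \eqref{db1}--\eqref{db4}, the choice of $\Delta$, the relations \eqref{eqn:multp}) carries over verbatim.

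The only genuine change is in the conclusion. In Theorem~\ref{thm:generic} one shows $\Emp(\bar z,n)\to\mu$; here instead I want $V_T(\bar z)=V$. For the inclusion $V\subset V_T(\bar z)$ I note that along the times $n=q_2^{(k)}$ the empirical measure $\Emp(\bar z,q_2^{(k)})$ is within $O(2^{-k})$ of $\gamma(x_k)$ (this is exactly \eqref{lim:final} with $\mu_k$ replaced by $\gamma(x_k)$), hence every accumulation point of $(\gamma(x_k))$ — and thus every point of $V$ — lies in $V_T(\bar z)$; one should also check that the ``intermediate'' times $q_2^{(k)}\le n<q_2^{(k+1)}$ only produce empirical measures that stay in a shrinking neighborhood of the segment $[\gamma(x_k),\gamma(x_{k+1})]\subset$ (a neighborhood of) $V$, using the two families of estimates \eqref{lim:qnj}--\eqref{lim:qnjj} for the two blocks and Lemma~\ref{lem:aux} for the convex combinations, so no accumulation point outside $V$ is created. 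For the reverse inclusion $V_T(\bar z)\subset V$, I observe that $V$ is connected and contains all the $\gamma(x_k)$ up to arbitrarily small error, so the closed set of weak$\ast$ limits of convex combinations that appear along the orbit of $\bar z$ is contained in an arbitrarily small neighborhood of $\conv$-hull-type sets attached to $V$; more cleanly, since $\Emp(\bar z,n)$ is always $o(1)$-close either to some $\gamma(x_k)$ or to a point on a segment joining two consecutive $\gamma(x_k)$'s whose endpoints are $2^{-k}$-close to $V$, and $V$ is closed, every accumulation point lies in $V$. The main obstacle is precisely this last point: one must make the choices of $\lambda$ and $\eps$ in the linking step (the analogues of \eqref{eqn:p}--\eqref{eqn:q}) tight enough that the empirical measure never drifts a definite distance away from $V$ during a transition, and one must phrase the ``$V$ is connected'' input correctly — it enters only in the construction of the sequence $(\mu_n)$, guaranteeing that the finite $2^{-k}$-nets can be chained together without jumps, which is what forces every accumulation point to be approachable from within $V$. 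Once $(\mu_n)$ is built with these properties, the dynamical construction and the estimates are a routine adaptation of the proof of Theorem~\ref{thm:generic}.
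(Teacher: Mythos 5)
Your proposal is correct and follows essentially the same route as the paper: the paper's proof simply constructs a sequence of CO-measures whose set of accumulation points is exactly $V$ and whose consecutive terms satisfy $\D(\mu_n,\mu_{n+1})\to 0$ (using density of $\MTp(K)$ from Theorems~\ref{thm:closeable} and~\ref{thm:linkable} together with connectedness of $V$), and then repeats the construction of Theorem~\ref{thm:generic} verbatim, exactly as you describe. One cosmetic remark: you do not need $\MT(X)$ to be compact (it need not be when $X$ is non-compact) — the finite $2^{-k}$-nets exist because $V$ itself is a continuum, hence compact.
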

\begin{proof}
Take a continuum $V\subset\MT(X)$. 
As the set $\{\gamma(x)\colon x\in K\}$ is dense in $\MT(X)$, there is a sequence $(\mu_n)_{n\in\N}\subset\{\gamma(x)\colon x\in K\}$ such that $V$ is the set of all its accumulation points and
$\D(\mu_n,\mu_{n+1})\to 0$ as $n\to\infty$.
Then we can repeat the same construction as in the proof of Theorem~\ref{thm:generic}.
\end{proof}

\section{Proof of Theorem~\ref{thm:main}}\label{sec:mainresult}



In this section we complete the proof of Theorem~\ref{thm:main}. Throughout what follows, we shall assume that $(X,T)$ is a dynamical system, that $K\subset\Per(T)$ is a linkable set, and that $(X,T)$ is $K$-approximable (this holds, in particular, if $(X,T)$ is $K$-closeable).

First we show that transitivity on the measure center is a necessary condition for density of ergodic measures in $\MT(X)$.
\begin{lemma}\label{lem:plus-residual}
The set
$\MTpos(\CM(T))$ is residual in $\MT(X)$.
\end{lemma}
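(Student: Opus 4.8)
The plan is to show that $\MTpos(\CM(T))$ is a dense $G_\delta$ in $\MT(X)$; since $\MT(X)$ is a complete metric space, this is equivalent to being residual. First I would reduce "$\supp\mu\supset\CM(T)$" to a countable intersection of open conditions. Fix a countable base $\{U_k\}_{k\in\N}$ for the topology of $X$ consisting of sets with $U_k\cap\CM(T)\neq\emptyset$ (equivalently $\mu(U_k)>0$ for some invariant $\mu$); then $\mu\in\MTpos(\CM(T))$ if and only if $\mu(U_k)>0$ for all such $k$. For each $k$ and each $m\in\N$ put
\[
\mathcal{G}_{k,m}=\{\mu\in\MT(X)\colon \mu(U_k)>1/m\}.
\]
Each $\mathcal{G}_{k,m}$ is open in the weak$\ast$ topology because $\mu\mapsto\mu(U_k)$ is lower semicontinuous on $\M(X)$ for $U_k$ open (the standard portmanteau fact, see \cite[p.149]{Walters}). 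Then $\MTpos(\CM(T))=\bigcap_k\bigcup_m\mathcal{G}_{k,m}$, so it suffices to show that for each fixed $k$ the open set $\bigcup_m\mathcal{G}_{k,m}=\{\mu\colon\mu(U_k)>0\}$ is dense in $\MT(X)$.

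For density, fix $k$ with $U_k\cap\CM(T)\neq\emptyset$, an arbitrary $\mu\in\MT(X)$, and $\eps>0$; I must produce $\nu\in\MT(X)$ with $\D(\mu,\nu)<\eps$ and $\nu(U_k)>0$. Since $U_k$ meets the measure center, there is some $\mu'\in\MT(X)$ with $\mu'(U_k)>0$. Now use the hypotheses: by Theorems~\ref{thm:closeable} and~\ref{thm:linkable} (or, under the weaker $K$-approximability hypothesis, directly by that assumption combined with Theorem~\ref{thm:linkable}), the set $\MTp(K)$ is dense in $\cch(\MTe(X))=\MT(X)$. In particular we can pick a CO-measure $\gamma(y)$ with $y\in K$ so close to $\mu'$ that $\gamma(y)(U_k)>0$ as well (using lower semicontinuity of $\mu\mapsto\mu(U_k)$ again), i.e.\ the periodic orbit of $y$ passes through $U_k$. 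Likewise pick $\gamma(y_0)\in\MTp(K)$ with $\D(\mu,\gamma(y_0))<\eps/2$. Then set
\[
\nu=(1-t)\gamma(y_0)+t\,\gamma(y)
\]
for a small $t>0$ chosen so that $\D(\nu,\gamma(y_0))\le t\cdot\mathrm{diam}\,\MT(X)<\eps/2$ (or simply invoke Lemma~\ref{lem:aux}\eqref{lem:uniform-close}); then $\D(\mu,\nu)<\eps$ and $\nu(U_k)\ge t\,\gamma(y)(U_k)>0$. This proves density of $\{\mu\colon\mu(U_k)>0\}$, and hence of the $G_\delta$ set $\MTpos(\CM(T))$.

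The only genuinely delicate point is the very first reduction: one must make sure that the collection $\{U_k\colon U_k\cap\CM(T)\neq\emptyset\}$ is a countable family whose positivity really does characterize membership in $\MTpos(\CM(T))$. This is exactly the characterization recorded in the Preliminaries ("$x\in\CM(T)$ iff every open neighborhood of $x$ has positive measure for some invariant measure") together with the fact that $X$ is second countable (being Polish), so that $\supp\mu\supset\CM(T)$ can be tested on a countable base. Everything else is routine: lower semicontinuity of $\mu\mapsto\mu(U)$ for open $U$, and the density of $\MTp(K)$ in $\MT(X)$ supplied by the two theorems already proved. Note this lemma does not assert $\MTpos(X)\neq\emptyset$ — when $\CM(T)\subsetneq X$ the residual measures have support equal to $\CM(T)$, not $X$, consistent with the discussion following Theorem~\ref{thm:main}.
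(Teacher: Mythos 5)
Your proof is correct, but for the density step you take a genuinely different and heavier route than the paper. The reduction of $\MTpos(\CM(T))$ to the countable intersection of the open sets $\{\mu\colon\mu(U_k)>0\}$ via lower semicontinuity of $\mu\mapsto\mu(U_k)$ matches the paper, and your density argument does close. However, the paper never invokes Theorem~\ref{thm:closeable}, Theorem~\ref{thm:linkable}, or any of the standing hypotheses on $K$: it picks, for each basic open set $V_n$ meeting $\CM(T)$, an invariant measure $\mu_n$ with $\mu_n(V_n)>0$, forms the single invariant measure $\mu^*=\sum_n 2^{-n-1}\mu_n$ (whose support is exactly $\CM(T)$), and perturbs an arbitrary $\mu$ to $\mu_\eps=\eps\mu^*+(1-\eps)\mu$, which lies outside every closed set $D_n=\{\nu\colon\nu(V_n)=0\}$ and converges to $\mu$ as $\eps\to 0$; hence each $D_n$ is nowhere dense. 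This is a pure convexity argument using only the definition of the measure center, so the lemma holds for an arbitrary dynamical system with $\MT(X)\neq\emptyset$. That extra generality matters downstream: Proposition~\ref{prop:trans} (density of ergodic measures forces transitivity on $\CM(T)$) is meant as a necessary condition independent of closeability and linkability, and it rests on this lemma. Your version is logically sound in the context of Section~\ref{sec:mainresult} (no circularity, since Theorems~\ref{thm:closeable} and~\ref{thm:linkable} do not use the lemma), but it makes the statement appear to depend on hypotheses it does not need; moreover, even within your own argument the detour through CO-measures is superfluous, since $\nu=(1-t)\mu+t\mu'$ already has $\nu(U_k)\ge t\,\mu'(U_k)>0$ and $\D(\mu,\nu)\le t$ by Lemma~\ref{lem:aux}\eqref{lem:uniform-close}.
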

\begin{proof}
Let $\mathcal{U}=(U_n)_{n=1}^\infty$ be a countable base for the topology on $X$. Let $\mathcal{V}$ be a collection
of such elements $V\in\mathcal{U}$ that $V\cap \CM(T)\neq\emptyset$.  As $\mathcal{V}$ is at most countable we may write $\mathcal{V}=(V_n)_{n=1}^\infty$ (we repeat some set infinitely many times in case $\mathcal{V}$ is finite). For each $n\in \N$ there is
a measure $\mu_n\in\MT(X)$ such that $\mu_n(V_n)>0$. Let
$\mu^*=\sum_{n\in\N} 2^{-n-1} \mu_n$. It is easy to see that $\mu^*\in\MT(X)$ and $\supp\mu^*=\CM(T)$.
Let $D_n=\{\mu\in\MT(X)\colon\mu(V_n)=0\}$. Then $D_n$ is clearly closed. Moreover it is nowhere dense since
for every $\mu\in D_n$ and $\eps\in (0,1)$ the invariant measure $\mu_\eps=\eps\mu^*+(1-\eps) \mu $ does not belong to $D_n$
but $\mu_\eps$ approaches $\mu$ as $\eps\to 0$. Therefore, $D=\bigcup_{n\in\N}D_n$ is nowhere dense and hence its complement 
$\{\mu\in\MT(X)\colon\supp\mu=\CM(T)\}$ is residual.
\end{proof}

\begin{proposition}\label{prop:trans}
If $\MTe(X)$ is dense in $\MT(X)$, then $T|_{\CM(T)}$ is tran\-si\-tive.
\end{proposition}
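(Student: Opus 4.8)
The plan is to take two nonempty open subsets $U,V$ of $\CM(T)$ and to produce a point of $U$ whose forward orbit under $T|_{\CM(T)}$ meets $V$. First I would write $U=\widetilde U\cap\CM(T)$ and $V=\widetilde V\cap\CM(T)$ for some open $\widetilde U,\widetilde V\subset X$. By the characterization of the measure center recalled in Section~\ref{sec:prelimii}, nonemptiness of $U$ and of $V$ yields invariant measures $\mu_1,\mu_2\in\MT(X)$ with $\mu_1(\widetilde U)>0$ and $\mu_2(\widetilde V)>0$; then $\mu:=\tfrac12(\mu_1+\mu_2)\in\MT(X)$ satisfies $\mu(\widetilde U)>0$ and $\mu(\widetilde V)>0$ simultaneously.

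Next I would invoke the hypothesis. Since $\MTe(X)$ is dense in $\MT(X)$, pick ergodic measures $\nu_k\to\mu$. By the standard lower semicontinuity of $\nu\mapsto\nu(O)$ on open sets $O$ with respect to weak$\ast$ convergence, $\liminf_k\nu_k(\widetilde U)\ge\mu(\widetilde U)>0$ and $\liminf_k\nu_k(\widetilde V)\ge\mu(\widetilde V)>0$, so for large $k$ the ergodic measure $\nu:=\nu_k$ has $\nu(\widetilde U)>0$ and $\nu(\widetilde V)>0$. This is the one step where the density assumption is genuinely used: a general invariant measure positive on both $\widetilde U$ and $\widetilde V$ need not connect the two sets by an orbit, whereas an ergodic one will.

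Then comes the routine ergodic-theoretic step. The set $W:=\bigcup_{n\ge1}T^{-n}\widetilde V$ satisfies $T^{-1}W\subset W$, hence $\nu(W\setminus T^{-1}W)=0$ by $T$-invariance of $\nu$, so $W$ is invariant modulo $\nu$; since $\nu(W)\ge\nu(\widetilde V)>0$, ergodicity forces $\nu(W)=1$. As $\nu(\CM(T))=1$ as well, we get $\nu(\widetilde U\cap W\cap\CM(T))=\nu(\widetilde U)>0$, so this set is nonempty. Any point $x$ in it lies in $U$, lies in $\CM(T)$, and satisfies $T^n x\in\widetilde V$ for some $n\ge1$; since $\CM(T)$ is forward $T$-invariant, $T^n x\in\widetilde V\cap\CM(T)=V$, whence $U\cap(T|_{\CM(T)})^{-n}(V)\ne\emptyset$. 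Thus $T|_{\CM(T)}$ is transitive.

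I expect the only delicate point to be the bookkeeping between open subsets of $X$ and of $\CM(T)$ and, in particular, checking that the witness point can be chosen inside $\CM(T)$ so that it is a legitimate point of the restricted system; this is taken care of by the fact that every invariant measure assigns full mass to $\CM(T)$, so intersecting with $\CM(T)$ never destroys positivity of the measure of a set.
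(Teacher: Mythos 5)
Your proof is correct, but it follows a genuinely different route from the one in the paper. The paper's argument is a Baire-category one: by Lemma~\ref{lem:plus-residual} the set $\MTpos(\CM(T))$ of invariant measures with support equal to $\CM(T)$ is residual in $\MT(X)$, while $\MTe(X)$, being the set of extreme points, is a $G_\delta$ and hence residual once it is assumed dense; the intersection of these two residual sets is nonempty, so there exists an \emph{ergodic} measure whose support is exactly $\CM(T)$, and transitivity of $T$ restricted to the support of an ergodic measure is classical. You instead work pairwise: given open $\widetilde U,\widetilde V$ meeting $\CM(T)$, you average two measures to get $\mu$ charging both sets, use the portmanteau inequality $\liminf_k\nu_k(O)\ge\mu(O)$ to extract a single ergodic $\nu$ charging both, and then run the standard argument that the $\nu$-almost invariant set $\bigcup_{n\ge1}T^{-n}\widetilde V$ has full measure, which produces the connecting orbit; the bookkeeping with $\CM(T)$ is handled correctly since every invariant measure gives it full mass and it is forward invariant. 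Each approach has its merits: the paper's is shorter given Lemma~\ref{lem:plus-residual} and yields as a byproduct an ergodic measure of full support in $\CM(T)$ (which is needed anyway for item~\eqref{Sig:5} of Theorem~\ref{thm:main}), whereas yours is self-contained and more elementary, avoiding both the Baire category theorem and the construction of a full-support measure, at the cost of having to treat each pair of open sets separately.
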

\begin{proof}Ergodic measures are extreme points of $\MT(X)$, hence $\MTe(X)$ is always a $G_\delta$-set in $\MT(X)$.
By the Lemma~\ref{lem:plus-residual}, the subset of measures $\mu\in\MT(X)$ satisfying $\supp\mu=\CM(T)$ is residual in $\MT(X)$. By our assumption $\MTe(X)$ is also residual. Therefore, their intersection $\{\mu\in\MTe(X)\colon\supp\mu=\CM(T)\}$ is residual as well. It is well known that a dynamical system restricted to a support of an ergodic invariant measure is transitive.
\end{proof}

Recall that a measure with \emph{full support} is a measure whose support is $X$.

\begin{corollary}
If  $\MTe(X)$ is dense in $\MT(X)$ and there is an invariant measure with full support, then $T$ is transitive.
\end{corollary}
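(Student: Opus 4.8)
The plan is to reduce the statement to Proposition~\ref{prop:trans} by showing that the existence of a full-support invariant measure forces the measure center to be the whole space. First I would fix an invariant measure $\mu\in\MT(X)$ with $\supp\mu=X$. Recall from the definitions that $\CM(T)$ is the smallest closed subset of $X$ with $\nu(\CM(T))=1$ for every $\nu\in\MT(X)$; in particular $\mu(\CM(T))=1$. Since $\CM(T)$ is closed and has full $\mu$-measure, and $\supp\mu$ is by definition the smallest closed set of full $\mu$-measure, we get $\supp\mu\subseteq\CM(T)$. Hence $X=\supp\mu\subseteq\CM(T)\subseteq X$, so $\CM(T)=X$.

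With the identity $\CM(T)=X$ in hand, the hypothesis that $\MTe(X)$ is dense in $\MT(X)$ allows me to apply Proposition~\ref{prop:trans}, which yields that $T|_{\CM(T)}$ is transitive. Since $\CM(T)=X$, this is precisely the assertion that $T=T|_{X}$ is transitive, which finishes the proof.

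I do not expect any genuine obstacle here: the corollary is an immediate specialization of Proposition~\ref{prop:trans}, and the only point needing (mild) care is the inclusion $\supp\mu\subseteq\CM(T)$, which follows at once from the minimality clauses in the definitions of the support of a measure and of the measure center. Note also that no compactness of $X$ is used, so the statement holds verbatim in the Polish setting assumed throughout.
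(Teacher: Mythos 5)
Your proof is correct and follows exactly the paper's route: you show that a full-support invariant measure forces $\CM(T)=X$ (via the minimality clauses in the definitions of support and measure center) and then invoke Proposition~\ref{prop:trans}. The paper states the first step as a one-line observation; your spelled-out version of that inclusion is the same argument.
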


\begin{proof}Observe that there is a measure with support $X$ if, and only if, the measure center of $T$ is the whole space. Now Proposition~\ref{prop:trans} concludes the proof.
\end{proof}

Observe that the periodic points can be dense in $X$ for a topologically transitive $T\colon X\to X$, but periodic invariant measure can fail to be dense in $\MT(X)$. For example, D\'iaz, Gelfert, and Rams~\cite{DiaGelRam:10} constructed a topologically transitive and partially hyperbolic set $X$ of a local diffeomorphism $T$ which is a homoclinic class of a hyperbolic periodic point (and hence is the closure of hyperbolic periodic points with certain properties, hence $\CM(T|_X)=X$) such that the set of ergodic measures fails to be connected (\cite[Remark 5.2]{DiaGelRam:10}).

Moreover, Weiss \cite{Weiss} constructed an example of a mixing shift space $X$ with dense set of periodic points (hence $\CM(\sigma|_X)=X$) but without an ergodic measure of full support. A similar topologically mixing system, in which the periodic measures are the only ergodic measures is presented in \cite{FKKL}.

Let $\M_T^{co\, k}(K)$ denote the set of CO-measures supported by periodic points from $K$ with minimal periods at most $k\in\N$.

\begin{proposition}\label{prop:k-dense}
If $T\colon X\to X$ is closeable with respect to a linkable set $K\subset\Per(T)$, then
either $\MT(X)$ is a single CO-measure or for each $k\in\N$ the set $\{\gamma(x)\in\MT(X)\colon x\in K\}\setminus \M_T^{co\, k}(X)$ is dense in $\MT(X)$.
\end{proposition}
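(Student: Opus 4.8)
The plan is to dichotomize according to whether $\MT(X)$ is a singleton or not. If $\MT(X)$ consists of a single measure, that measure is ergodic, and since $K$-closeability forces ergodic measures to be approximated by CO-measures from $K$ (Theorem~\ref{thm:closeable}), the unique measure must itself be a CO-measure supported on an orbit of a point in $K$; this is the first alternative. So assume from now on that $\MT(X)$ is not a singleton. Fix $k\in\N$. We must show that $\{\gamma(x)\colon x\in K\}\setminus \M_T^{co\,k}(X)$ is dense in $\MT(X)$. By Theorems~\ref{thm:closeable} and~\ref{thm:linkable} we already know $\MTp(K)=\{\gamma(x)\colon x\in K\}$ is dense in $\MT(X)$ (its closure contains $\MTe(X)$ and is convex and closed, hence equals $\MT(X)$). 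So it is enough to show that every $\gamma(x)\in\MTp(K)$ is itself a limit of CO-measures from $K$ whose supporting periodic points have minimal period exceeding $k$.

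First I would fix $\gamma(y_1)\in\MTp(K)$ with $y_1\in K$ and a target accuracy $\eps>0$, and produce a second point $y_2\in K$ whose CO-measure is $\eps$-close to $\gamma(y_1)$ but which can be chosen to have minimal period $>k$; then an application of linkability with $\lambda$ close to $1$ will glue $y_1$ and $y_2$ into a single periodic point $z\in K$ whose CO-measure is close to a convex combination $\lambda\gamma(y_1)+(1-\lambda)\gamma(y_2)$, hence close to $\gamma(y_1)$, by Remark~\ref{linkable-close}; and because the loop presenting $z$ genuinely uses the $y_2$-block of positive length, one gets that the minimal period of $z$ is large — in any case at least $p_2\ge N$, and $N$ can be taken $>k$. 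This forces $\gamma(z)\notin\M_T^{co\,k}(X)$, completing the argument. The remaining task, therefore, is to manufacture $y_2\in K$ with $\D(\gamma(y_2),\gamma(y_1))<\eps$ and $\per(y_2)>k$.

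For this I would use the non-triviality of $\MT(X)$. Since $\MT(X)$ is not a singleton there are at least two distinct ergodic measures; pick one, say $\nu$, that is different from $\gamma(y_1)$, and a closeable generic point $w$ for it with a periodic point $y\in K$ $\delta$-shadowing a long initial orbit segment of $w$ (Definition~\ref{def:closeable} and Remark~\ref{closeable-close}), so that $\gamma(y)$ is close to $\nu$ and in particular $\gamma(y)\neq\gamma(y_1)$. Now take $y_2\in K$ to be the point obtained by linking $y_1$ with $y$ at a mixing ratio $\lambda$ extremely close to $1$: then $\gamma(y_2)$ is within $\eps$ of $\gamma(y_1)$, but the orbit of $y_2$ must visit a neighborhood of the orbit of $y$ (since $\gamma(y)\ne\gamma(y_1)$ there is an open set seen by $\gamma(y)$ and not by $\gamma(y_1)$, and weak$\ast$-closeness of $\gamma(y_2)$ to a convex combination involving $\gamma(y)$ forces the $y_2$-orbit to enter it), so the $y_2$-orbit cannot coincide with the $y_1$-orbit and $\per(y_2)$ is strictly larger; iterating or linking once more with $\lambda$ adjusted, or invoking the divisibility refinement of linkability (the lemma after Definition~\ref{def:linkability-2}, which lets one demand the gluing lengths be multiples of an arbitrary $N$), one can push $\per(y_2)$ above $k$ while keeping $\D(\gamma(y_2),\gamma(y_1))<\eps$.

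The main obstacle I anticipate is precisely the bookkeeping that guarantees the constructed periodic point has minimal period $>k$ rather than merely some large period — minimal period and period differ, and a naive concatenation could in principle collapse. The clean way around this is the divisibility refinement already recorded in the excerpt: linkability can be applied so that the block lengths $p_1,p_2$ are multiples of a prescribed $N$, and by choosing $N$ a large prime (or simply $N>k$ together with the observation that the resulting loop has length a positive multiple of $N$ exceeding $k$), the minimal period is automatically $>k$. Everything else is routine estimation with Lemma~\ref{lem:aux} and Remark~\ref{linkable-close}, exactly as in the proofs of Theorems~\ref{thm:closeable}, \ref{thm:linkable}, and~\ref{thm:generic}.
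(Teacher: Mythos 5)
Your setup is fine: the singleton case, the reduction via Theorems~\ref{thm:closeable} and~\ref{thm:linkable} to approximating each $\gamma(y_1)\in\MTp(K)$ by CO-measures from $K$ of large minimal period, and the idea of linking $y_1$ with some $y\in K$ satisfying $\gamma(y)\neq\gamma(y_1)$ at a ratio $\lambda$ close to $1$. The gap is in the one step that matters: nothing you invoke gives a lower bound on the \emph{minimal} period of the linked point $z$. Linkability only asserts $T^{q_2}(z)=z$, so $\per(z)$ is some divisor of $q_2$, and Definition~\ref{def:linkability-2} places no constraint on which divisor. Your claim that $\per(z)\ge p_2$ ``because the loop genuinely uses the $y_2$-block'' conflates the length of a shadowed segment with the minimal period: if, say, $y_1$ and $y$ are two fixed points whose distance is about $2\eps$, a point $z$ of minimal period $2$ oscillating between their $\eps$-neighborhoods can satisfy all four conditions of linkability with arbitrarily large $p_1,p_2,q_1,q_2$, and then $\gamma(z)\in\M_T^{co\,2}$ lands exactly where you need a long-period CO-measure. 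The divisibility refinement does not repair this: it makes $p_1,p_2$ multiples of a prescribed $N$, but $q_2$ is only controlled up to the factor $(1+\eps)$, so $q_2$ need not be a multiple of $N$; and even if it were, its divisor set (hence the set of admissible minimal periods) still contains small integers, so taking $N$ prime buys nothing. Finally, showing that the orbit of $z$ enters an open set missed by the orbit of $y_1$ only yields $\gamma(z)\neq\gamma(y_1)$; it gives no quantitative information about $\per(z)$.

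The paper sidesteps the whole difficulty with a short non-constructive argument. Suppose the conclusion fails on some open $U\subset\MT(X)$, i.e.\ every CO-measure from $K$ lying in $U$ has minimal period at most $k$. Since $\MTp(K)$ is dense in $\MT(X)$, these short-period CO-measures are dense in $U$; and by Lemma~\ref{lem:plus-residual} the set $\MTpos(\CM(T))$ is residual, hence meets $U$. A measure $\mu$ with $\supp\mu=\CM(T)$ is therefore a weak$\ast$ limit of measures carried by the closed set of periodic points of period at most $k$, which forces $\CM(T)$ to consist of such points; combined with transitivity of $T|_{\CM(T)}$ (Proposition~\ref{prop:trans}), $\CM(T)$ must be a single periodic orbit and $\MT(X)$ a single CO-measure. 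If you want to keep your constructive outline, you would need some additional mechanism that bounds the minimal period of a linked point from below; linkability and its divisibility refinement alone do not provide one.
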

\begin{proof}
Suppose that that there is a measure $\nu\in\MT(X)$ and its neighborhood $U\subset\MT(X)$ such that $\mathcal{K}^{co\, k}=U\cap \MT(K)\subset \M_T^{co\, k}(X)$. By Lemma~\ref{lem:plus-residual} the set $\MTpos(\CM(T))$ is residual. Hence, there is a measure
$\mu\in U\cap \MTpos(\CM(T))$. Hence $\mathcal{K}^{co\, k}$ is dense in $U$. Therefore, $\mu$ is a measure concentrated on a periodic orbit of length at most $k$. As the support of $\mu$ is $\CM(T)$ we conclude that $\mu$ is the only element of $\MT(X)$.
\end{proof}


\begin{proof}[Proof of Theorem \ref{thm:main}] As an immediate consequence of Theorems \ref{thm:closeable} and \ref{thm:linkable}
we obtain that \eqref{Sig:1} holds. The condition \eqref{Sig:1} and \cite[Section 3]{LOS} yield
\eqref{Sig:2a} and~\eqref{Sig:3}.
Since the set of extreme points of $\MT(X)$ 
is always a $G_\delta$ set (see \cite{P}), \eqref{Sig:1} implies \eqref{Sig:2}.
Lemma \ref{lem:plus-residual} and \eqref{Sig:2} imply~\eqref{Sig:5}.

A proof of \cite[Proposition 21.13]{DGS} can be rewritten word by word to give
\eqref{Sig:6}. Periodic specification property assumed in \cite[Proposition 21.13]{DGS} is only used to allow an application
of \cite[Proposition 21.13]{DGS}, which is proved above with weaken assumption as Proposition \ref{prop:k-dense}.

We have proved \eqref{Sig:7} as Proposition \ref{prop:trans} and \eqref{Sig:8} in Lemma \ref{lem:per-density} above.
The proof of \eqref{Sig:9} is the same as the proof of \cite[Proposition 21.10]{DGS} with reference to \cite[Proposition 21.13]{DGS} replaced by application of Proposition \ref{prop:k-dense}.
Theorem \eqref{thm:generic} and Corollary \eqref{cor:V} lead to \eqref{Sig:10}.

The reasoning of \cite[Proposition 21.18]{DGS} applies verbatim to \eqref{Sig:11} (one may invoke Theorem \eqref{thm:generic} instead of \cite[Proposition 21.15]{DGS}). Finally it is easy to see that \eqref{Sig:12} is a consequence of \eqref{Sig:11}.
\end{proof}

%
%
%
%
%
\section{Applications}\label{sec:applications}

Theorem \ref{thm:main} can be applied to the following examples:
\begin{itemize}
  \item system with periodic specification property,
  \item $\beta$-shifts,
  \item $S$-gap shifts,
  \item other coded systems.
  \end{itemize}
This we proved already  (see Propositions \ref{prop:spec-closeable} and \ref{prop:spec-link}, Propositions \ref{prop:beta-closeable} and \ref{prop:beta-link}, and Propositions \ref{prop:S-gap-closeable} and \ref{prop:S-gap-link}, respectively). Explicit examples of coded systems which satisfy the assumptions of Theorem \ref{thm:main} are presented in Propositions \ref{prop:counter-mult-mme} and \ref{prop:counter-entropy-gap}. 

Below we show that Theorem \ref{thm:main} applies also to isolated non-trivial transitive set of a $C^1$-generic
diffeomorphisms. 
Another family of examples is provided by transitive flows admitting a local product structure and satisfying the closing lemma (hence to the geodesic flow on a complete connected negatively curved manifold  or irreducible Markov chains defined on a countable alphabet).
Coudene and Schapira give an extensive list of similar examples \cite[page 169]{CS}.


\subsection{$C^1$-generic diffeomorphisms}\label{subsec:C1gendif}
Let $M$ be a compact boundaryless manifold. We denote by $\Diff^1(M)$ the space of $C^1$-diffeomorphisms of $M$ endowed
with the usual $C^1$-topology. The next theorem summarizes consequences of Theorem \ref{thm:main} and some results from \cite{ABC}. For completeness we present our results together with some of the consequences of \cite[Theorem 3.5]{ABC}.
Recall that the phrases ``a $C^1$-generic diffeomorphism $f\in\Diff^1(M)$ satisfies ...'' means that
``there exists a residual subset $\mathcal{R}$ of $\Diff^1(M)$ such that every $f \in\mathcal{R}$ satisfies ...''.
Recall that a compact $f$-invariant set $\Lambda\subset M$ is \emph{isolated} if there is an open set $U$ containing $\Lambda$ such that
\[
\Lambda=\bigcup_{n=-\infty}^{+\infty}f^{n}(U).
\]
A transitive set is \emph{trivial} if its consists of a single periodic orbit.

\begin{theorem}\label{thm:abc}
Let $\Lambda$ be an isolated non-trivial transitive set of a $C^1$-generic
diffeomorphism $f \in \Diff^1(M)$. Then:
\begin{enumerate}
  \item The set of periodic measures supported in $\Lambda$ is a dense subset of
the set $\mathcal{M}_f(\Lambda)$ of invariant measures supported in $\Lambda$. Hence,
$\mathcal{M}_f(\Lambda)$ is the Poulsen simplex and the set of ergodic measures is arcwise connected.
  \item 
  There is a residual set $\mathcal{N}\subset \mathcal{M}_f(\Lambda)$ such that every measure $\mu\in\mathcal{N}$ satisfies
  \begin{enumerate}
    \item\label{cond:a} $\mu$ is ergodic, but not mixing;
    \item\label{cond:b} $\mu$ has full support, $\supp \mu=\Lambda$;
    \item\label{cond:c} $\mu$ has zero entropy, $h_\mu(f)=0$;
    \item
    for $\mu$-a.e. point $x\in\Lambda$ the Oseledets splitting coincides with 
        the finest dominated splitting over $\Lambda$;
    \item\label{cond:e} $\mu$ is non-uniformly hyperbolic.
  \end{enumerate}
  \item\label{c1:10} For every non-empty, closed, connected $V\subset \mathcal{M}_f(\Lambda)$ there is a dense set $D\subset\Lambda$ such that $V_T(x)=V$ for every $x\in D$. In particular, every invariant measure $\mu\in\mathcal{M}_f(\Lambda)$ has a generic point.
  \item\label{c1:11} The set of points having maximal oscillation is residual in $\Lambda$.
  \item\label{c1:12} The set of quasiregular points is of first category in $\Lambda$.
\end{enumerate}
\end{theorem}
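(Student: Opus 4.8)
The plan is to derive Theorem~\ref{thm:abc} from Theorem~\ref{thm:main} applied to the compact dynamical system $(\Lambda,f|_\Lambda)$, supplemented by those assertions of \cite[Theorem 3.5]{ABC} that concern fine dynamical properties of measures (zero entropy~\eqref{cond:c}, the coincidence of the Oseledets splitting with the finest dominated splitting, and non-uniform hyperbolicity~\eqref{cond:e}) and lie outside the scope of Theorem~\ref{thm:main}. Everything else will follow once we exhibit a linkable set $K\subset\Per(f|_\Lambda)$ with respect to which $(\Lambda,f|_\Lambda)$ is $K$-closeable.

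I would start from the standard $C^1$-generic description of an isolated non-trivial transitive set: by Pugh's closing lemma, the general density theorem and the connecting lemma for pseudo-orbits of Bonatti and Crovisier (see \cite{ABC} and the references therein), for $C^1$-generic $f$ the set $\Lambda$ is the homoclinic class of some periodic point $p\in\Lambda$, periodic orbits are dense in $\Lambda$ (so $\CM(f|_\Lambda)=\Lambda$), and the periodic orbits homoclinically related to $p$ form an $f$-invariant, dense subset $K$ of $\Lambda$. I would then check that $K$ is linkable in the sense of Definition~\ref{def:linkability-2}. Given $y_1,y_2\in K$ with chosen periods, one uses that both are homoclinically related to $p$ to build a periodic pseudo-orbit which successively shadows a long initial segment of $y_1$ and then a long initial segment of $y_2$, in two consecutive blocks whose relative lengths can be prescribed; the $C^1$-connecting lemma closes this pseudo-orbit into a genuine periodic orbit, which can again be taken homoclinically related to $p$, hence in $K$, and whose two blocks $\varepsilon$-shadow the two segments with the length control and the control of $p_1/(p_1+p_2)$ demanded in Definition~\ref{def:linkability-2}. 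This is the same mechanism that yields the barycenter property in \cite{ABC}; the only extra point is that here the shadowing occurs along two \emph{intervals}, which is exactly what linkability requires.

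Next I would verify $K$-closeability. Fix an ergodic $\mu\in\mathcal{M}_f(\Lambda)$. By Ma\~n\'e's ergodic closing lemma, valid on a residual subset of $\Diff^1(M)$, $\mu$-almost every point $x$ is well closeable in the sense of \cite[Definition 4.3]{ABC}: for every $\varepsilon>0$ and every $N$ one finds, using Poincar\'e recurrence to control the return time, integers $p\ge N$ and $q$ with $p\le q\le(1+\varepsilon)p$ and a periodic point $y\in\db(x,p,\varepsilon)$ with $f^q(y)=y$; generically such a $y$ can moreover be chosen homoclinically related to $p$, i.e.\ in $K$. Since $\mu$-a.e.\ point is also generic for $\mu$, the measure $\mu$ has a $K$-closeable generic point, so $(\Lambda,f|_\Lambda)$ is $K$-closeable. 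Now Theorem~\ref{thm:main} applies: items~\eqref{Sig:1}, \eqref{Sig:2a}, \eqref{Sig:3} give part~(1) (density of the CO-measures from $K$, hence of all periodic measures in $\Lambda$, the Poulsen simplex, and arcwise connectedness of the ergodic measures); items~\eqref{Sig:2}, \eqref{Sig:5}, \eqref{Sig:6} together with $\CM(f|_\Lambda)=\Lambda$ give the residuality of ergodic, non-mixing, full-support measures (parts~\eqref{cond:a} and~\eqref{cond:b}); item~\eqref{Sig:10} gives~\eqref{c1:10}; item~\eqref{Sig:11} gives~\eqref{c1:11}; and item~\eqref{Sig:12} gives~\eqref{c1:12}. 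The residual set $\mathcal{N}$ of part~(2) is obtained by intersecting the residual sets just produced with the residual set of \cite[Theorem 3.5]{ABC} supplying~\eqref{cond:c}, the Oseledets statement and~\eqref{cond:e}. The main obstacle is precisely the second and third steps: upgrading the output of the $C^1$ perturbation machinery from the barycenter property to genuine linkability (shadowing along intervals), and simultaneously guaranteeing—in both the connecting-lemma and the ergodic-closing-lemma arguments—that all periodic orbits produced belong to one fixed linkable class $K$; once the relevant generic statements from \cite{ABC} are in place, the remainder is a routine translation into Definitions~\ref{def:closeable} and~\ref{def:linkability-2} and an invocation of Theorem~\ref{thm:main}.
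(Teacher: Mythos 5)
Your overall architecture is the same as the paper's: quote \cite{ABC} for items \eqref{cond:c}, the Oseledets statement and \eqref{cond:e}, upgrade the barycenter construction of \cite[Proposition 4.8]{ABC} (together with \cite[Section 3.2]{ABCDW}) to linkability of the periodic points of the (relative) homoclinic class, and then invoke Theorem~\ref{thm:main}. The genuine gap is your verification of $K$-closeability. Ma\~n\'e's ergodic closing lemma, and the notion of a well closeable point in \cite[Definition 4.3]{ABC}, do \emph{not} produce periodic orbits of $f$ itself: for a well closeable point $x$ they yield, for each $\eps>0$, a diffeomorphism $g$ that is $\eps$-close to $f$ in the $C^1$ topology and a $g$-periodic point whose $g$-orbit shadows the $f$-orbit segment of $x$. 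There is no generic mechanism that converts this into a genuine $f$-periodic point in $\db(x,p,\eps)$ with period $q\le(1+\eps)p$ as required by Definition~\ref{def:closeable}; such a pointwise closing property for the unperturbed map is known only under extra hypotheses (e.g.\ hyperbolicity of the measure, via Katok's closing lemma), and it is precisely because it is unavailable $C^1$-generically that Theorem~\ref{thm:main} is formulated for $K$-\emph{approximable} systems, with closeability mentioned only as a sufficient condition.

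The repair is the route the paper actually takes: one does not prove $K$-closeability at all. What Ma\~n\'e's ergodic closing lemma does give, after the standard semicontinuity and genericity argument (Ma\~n\'e's Ergodic General Density Theorem; see \cite[Theorem 3.8 and Proposition 4.1]{ABC}), is that for a $C^1$-generic $f$ every ergodic measure supported in $\Lambda$ is a weak$\ast$ limit of CO-measures of periodic orbits lying in the relative homoclinic class, i.e.\ $K$-approximability for the linkable set $K$. Together with linkability this is all that Theorem~\ref{thm:main} requires, and your deduction of part (1), of \eqref{cond:a}, \eqref{cond:b}, and of \eqref{c1:10}--\eqref{c1:12} then goes through exactly as you describe, with $\CM(f|_\Lambda)=\Lambda$ coming from density of the relevant periodic orbits. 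A smaller point: since $\Lambda$ is an isolated transitive set, the correct generic statement is that $\Lambda$ is a \emph{relative} homoclinic class with respect to an isolating neighborhood, and the linkability argument must be carried out for the periodic points of that relative class (this is how the paper phrases it), so that all periodic orbits produced by the perturbative arguments stay in $\Lambda$.
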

\begin{proof} For a proof of \eqref{cond:c}--\eqref{cond:e} see \cite{ABC}. Note that the density of periodic (ergodic) measures and \eqref{cond:b} also follows from \cite{ABC} but our proof is new.
To apply Theorem \ref{thm:main} we need to check that linkability and approximability hold $C^1$-generically.
Because isolated  non-trivial transitive sets of a $C^1$-generic diffeomorphisms are relative homoclinic classes
it is enough to show that if $f$ is a $C^1$-generic diffeomorphism, $V\subset M$ is an open set, and $\mathcal{O}$ is a hyperbolic periodic orbit, then the set of periodic points belonging to the relative homoclinic class of $\mathcal{O}$ with respect to $V$ is linkable. The proof follows verbatim the proof of Proposition 4.8 in \cite{ABC}. A careful inspection of \cite[page 23, lines 3--7]{ABC}) and \cite[Section 3.2]{ABCDW} leads to the stronger conclusion of linkability (in \cite{ABC} only barycenter is mentioned). The $\Per(f)$-approximability follows from Ma\~{n}\'{e}'s Ergodic General Density Theorem (see Theorem 3.8 and Proposition 4.1 in \cite{ABC}.
\end{proof}

We stress that items~\eqref{cond:a} and \eqref{c1:10}--\eqref{c1:12} in Theorem \ref{thm:abc} are new. The proof of Theorem \ref{thm:main} does not work if one assumes only barycenter property as defined in \cite{ABC} in place of linkability.

\begin{remark}
Sun and Tian \cite{ST} used yet another barycenter notion and proved \eqref{c1:11} and \eqref{c1:12} of Theorem \ref{thm:abc}. They say that a dynamical system $(X,T)$ has the \emph{barycenter property} if for any two periodic points $p, q \in\Per(T)$, and any $\eps>0$
there exists an integer $N = N(\eps, p, q) > 0$ such that for any two integers $n_1$, $n_2$, there
exists a point $z \in\Per(T)$ such that
$\rho(T^i(z), T^i(p)) < \eps$, for $-n_1\le i\le 0$ and $\rho(T^{i+N}(z), T^i(q)) < \eps)$, for $0 \le i \le n_2$. This variant of the barycenter property suffices to show that for every non-empty, closed, connected $V\subset \MT(X)$ there is a point $x\in X$ such that $V\subset V_T(x)$, but is too weak to yield the reverse inclusion.
\end{remark}

\subsection{Flows} \label{subsec:flows}

Our results remain true if we consider flows (continuous time dynamical systems) instead of transformations. The proofs apply verbatim.
As an example of an application we consider a continuous flow $\varphi\colon\R\times X\to X$ on a complete separable metric space $X$. 
The \emph{strong stable} and \emph{$\eps$-strong stable} sets of $x\in X$ are defined by
\begin{align*}
\wss(x)&=\{y\in X\mid \rho(\varphi_t(x),\varphi_t(y))\to 0 \textrm{ as }t\to \infty\},\\
\wsse(x)&=\{y\in \wss(x)\mid \rho(\varphi_t(x),\varphi_t(y))\le \eps \textrm{ for all }t\ge 0\}.
\end{align*}
Similarly, the \emph{strong unstable} and
\emph{$\eps$-strong unstable} sets of $x\in X$ are defined by
\begin{align*}
\wsu(x)&=\{y\in X\mid \rho(\varphi_{-t}(x),\varphi_{-t}(y))\to 0 \textrm{ as }t\to \infty\},\\
\wsue(x)&=\{y\in \wsu(x)\mid \rho(\varphi_{-t}(x),\varphi_{-t}(y))\le \eps \textrm{ for all }t\ge 0\}.
\end{align*}
We say that the flow $\varphi$ has a \emph{product structure} in an open set $V\subset X$ if
for any $\eps>0$ there exists $\delta>0$ such that for all $x,y\in V$ with $\rho(x,y)\le\delta$ there are a real number $t$ with
$|t|\le\eps$ and a point $z\in \wsue(\varphi_t(x))\cap\wsse(y)$.
The flow $\varphi$ \emph{admits a local product structure} if every $v\in X$ has a neighborhood $V$ such that $\phi$ has a product structure in $V$.
(A similar definition can be stated for transformations~\cite{KH:95}. In this case, there is no shift in time, that is, $z\in  \wsue(x)\cap\wsse(y)$.)

We say that \emph{the closing lemma holds} in an open set $V\subset X$ if for every $\eps>0$ there exist $\delta>0$ and $t_0>0$ such that for all $x\in V$ and $t\ge t_0$ with $\rho(x,\varphi_t(x))<\delta$ and $\varphi_t(x)\in V$ there are $x_0\in X$ and $l>0$ with $|l-t|<\eps$, $\varphi_l(x_0)=x_0$, and $\rho(\varphi_s(x),\varphi_s(x_0))<\eps$ for $0<s<\min\{t,l\}$. The flow $\varphi$ \emph{satisfies the closing lemma} if every point in $X$ has a neighborhood in which the closing lemma holds. A discrete time counterpart of the closing lemma is obvious.
\begin{proposition}
If a flow $\varphi\colon\R\times X\to X$ admits a local product structure and satisfies the closing lemma, then it has the linkability property.
\end{proposition}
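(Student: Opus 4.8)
The plan is to verify Definition~\ref{def:linkability-2} --- and in fact its strong form, Definition~\ref{def:linkability} --- for the set $K$ of periodic points of $\varphi$; since linking two orbits forces us to travel from a neighbourhood of one to a neighbourhood of the other, we work inside a transitive invariant set and, in the case of interest, take $K=\Per(\varphi)$ with $\varphi$ transitive. Fix $y_1,y_2\in K$ of minimal periods $\tau_1,\tau_2>0$, a weight $\lambda\in[0,1]$, and $\varepsilon>0$. The orbits of $y_1,y_2$ are compact, so they are covered by finitely many charts in which $\varphi$ has a product structure and in which the closing lemma holds; shrinking, pick a single $\delta>0$ good for all of them, small enough that the finitely many errors produced below stay below $\varepsilon$.

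First I would bridge the two orbits by transitivity: choose $w_{12}$ and a time $s_{12}>0$ with $w_{12}$ within $\delta$ of $y_1$ and $\varphi_{s_{12}}(w_{12})$ within $\delta$ of $y_2$, and symmetrically $w_{21}$, $s_{21}>0$ bridging from near $y_2$ back to near $y_1$. For large integers $n_1,n_2$, to be pinned down at the end, I assemble the $\delta$-pseudo-orbit that follows the orbit of $y_1$, starting at $y_1$, for time $n_1\tau_1$, then the segment $\varphi_{[0,s_{12}]}(w_{12})$, then the orbit of $y_2$, starting at $y_2$, for time $n_2\tau_2$, then the segment $\varphi_{[0,s_{21}]}(w_{21})$, returning within $\delta$ of its starting point.

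The next step is to straighten this pseudo-orbit into a genuine orbit that $\varepsilon$-shadows it: at each of the four junctions I would apply the local product structure, absorbing a small time-reparametrisation $|t|\le\varepsilon/4$ and sliding along an $\varepsilon$-strong-unstable set followed by an $\varepsilon$-strong-stable set; since $\wsse$ contracts in forward time and $\wsue$ in backward time, this produces a single genuine orbit segment $\varphi_{[0,\ell]}(x_0)$ shadowing the whole pseudo-orbit, with $\ell$ within $\varepsilon$ of $n_1\tau_1+s_{12}+n_2\tau_2+s_{21}$. This segment returns close to $x_0$, so the closing lemma yields a periodic point $z$ with period $q_2$, $|q_2-\ell|<\varepsilon$, whose orbit $\varepsilon$-shadows the segment; since $\varphi$ is transitive, $z\in\Per(\varphi)=K$. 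Now set $q_1$ equal to the elapsed time at the end of the $y_1$-block, $p_1=n_1\tau_1$, $p_2=n_2\tau_2$. Then $z\in\db(y_1,p_1,\varepsilon)$ and $\varphi_{q_1}(z)\in\db(y_2,p_2,\varepsilon)$, and, because the bridging times and the accumulated time shifts are bounded while $p_1,p_2\to\infty$, one gets $p_i\le q_i-q_{i-1}\le(1+\varepsilon)p_i$. Finally, choosing $n_1,n_2$ large with $n_1\tau_1/(n_1\tau_1+n_2\tau_2)$ within $\varepsilon$ of $\lambda$ (possible since $\tau_1,\tau_2$ are fixed) yields $\lvert p_1/(p_1+p_2)-\lambda\rvert\le\varepsilon$, which is the last condition; allowing $n_1,n_2$ to be prescribed up to a bounded multiplicative error gives the strong form. (Alternatively, one may first extract a periodic-specification-type property on the transitive set and invoke Proposition~\ref{prop:spec-link}; this is the same argument repackaged.)

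The main obstacle I anticipate is precisely the straightening step: turning a pseudo-orbit with several jumps into a true shadowing orbit by means of the local product structure, and keeping honest account of the cumulative time-reparametrisations so that the return time of $x_0$, the intermediate time $q_1$, and the period $q_2$ are all controlled to the precision demanded by Definition~\ref{def:linkability-2}. This is the flow analogue of the Anosov closing lemma used to derive specification, and it is exactly here that the $(1+\varepsilon)$ slack built into linkability is spent; the discrete-time version of the proposition follows by the same reasoning with all time-reparametrisations suppressed.
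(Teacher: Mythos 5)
Your sketch is essentially the right argument, but it is worth knowing that the paper itself does not carry it out: its proof of this proposition is a one-line citation of \cite[Proposition 4.2]{CS} (see also \cite[Lemma 1]{C}), and what you propose --- bridge the two periodic orbits using transitivity, glue the resulting pseudo-orbit with the local product structure at a bounded number of junctions, close it up with the closing lemma, then do the time bookkeeping for $p_i$, $q_i$ and $p_1/(p_1+p_2)$ --- is precisely the mechanism behind that reference. Your explicit addition of transitivity is not a blemish but a necessary correction of the statement as literally written: a flow consisting of two disjoint periodic circles admits a local product structure and satisfies the closing lemma, yet $\Per(\varphi)$ is not linkable; the paper only ever applies the proposition to transitive flows (e.g.\ geodesic flows restricted to the non-wandering set), so your reading matches the intended hypotheses.

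The one place where your justification does not hold up as stated is the straightening step. Under the paper's purely topological definitions, $\wsse$ and $\wsue$ do not contract; membership only guarantees that the distance stays below $\varepsilon$ for all forward (resp.\ backward) time. Hence the output error of one gluing, of size roughly $\varepsilon$, can exceed the input tolerance $\delta(\varepsilon)\le\varepsilon$ demanded by the product structure at the next junction, so you cannot run all the gluings at a single scale $\delta$ ``small enough that the finitely many errors stay below $\varepsilon$''. The repair uses exactly the feature you already isolated: the number of junctions is fixed (three or four, independent of $n_1,n_2$, since following a periodic orbit for many periods costs no gluing). Choose nested scales $\varepsilon_4=\varepsilon$, $\varepsilon_3\le\delta(\varepsilon_4)/2$, and so on, take the pseudo-orbit jumps smaller than the finest scale, and glue from the last segment backwards; each application of the product structure is then legitimate, the total shadowing error and the accumulated time shifts are bounded by a fixed multiple of $\varepsilon$, and your closing-lemma step, the bounds $p_i\le q_i-q_{i-1}\le(1+\varepsilon)p_i$ (absorbing the fixed bridging times $s_{12},s_{21}$ for large $n_i$), and the choice of $n_1,n_2$ realizing $\lambda$ all go through as you describe; the same construction, valid for arbitrary large multiples $p_j$ of the minimal periods, gives the strong form. (One small slip: the periodic point $z$ lies in $K=\Per(\varphi)$ simply because the closing lemma produces a periodic point; transitivity has nothing to do with that.)
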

\begin{proof}
This is a consequence of \cite[Proposition 4.2]{CS} (see also \cite[Lemma 1]{C}).
\end{proof}

 Note that if the flow $\varphi$  satisfies the closing lemma then every recurrent point is $\Per(\varphi)$-closeable. 
 Observe that the converse need not to be true. To get a discrete-time example it is enough to remove some backward edges from the graph presenting a shift space constructed in Proposition \ref{prop:counter-mult-mme}. We can now summarize the above discussion and state our main theorem about flows.

\begin{proposition}
If a flow $\varphi\colon\R\times X\to X$ admits a local product structure and satisfies the closing lemma, then Theorem \ref{thm:main} applies to $\varphi$.
\end{proposition}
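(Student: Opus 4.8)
The plan is to check the two hypotheses of the flow version of Theorem~\ref{thm:main} with the choice $K=\Per(\varphi)$: that $\Per(\varphi)$ is linkable, and that $(X,\varphi)$ is $\Per(\varphi)$-closeable (which is stronger than $\Per(\varphi)$-approximability). The first is immediate from the preceding proposition, since a flow admitting a local product structure and satisfying the closing lemma has the linkability property; note also that the set of periodic orbits of a flow is automatically $\varphi$-invariant, as required.

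For the second, I would use the observation recorded just before this proposition: when $\varphi$ satisfies the closing lemma, every recurrent point is $\Per(\varphi)$-closeable. Fix an arbitrary $\mu\in\MTe(X)$ (if $\MT(X)=\emptyset$ all assertions of Theorem~\ref{thm:main} are vacuous, so there is nothing to prove; and if $\MT(X)\ne\emptyset$ the closing lemma applied to a recurrent point produces a periodic orbit, so $\Per(\varphi)\ne\emptyset$). By the Poincar\'e recurrence theorem the recurrent points form a set of full $\mu$-measure, and by the Birkhoff ergodic theorem so do the points generic for $\mu$; hence the two sets meet. Any point $x$ in the intersection is both recurrent and generic for $\mu$, so it is $\Per(\varphi)$-closeable, and by definition this makes $\mu$ a $\Per(\varphi)$-closeable measure. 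Since $\mu$ was arbitrary, $(X,\varphi)$ is $\Per(\varphi)$-closeable, hence in particular $\Per(\varphi)$-approximable.

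Having established linkability of $\Per(\varphi)$ and $\Per(\varphi)$-closeability of $(X,\varphi)$, I would conclude by invoking the flow analogue of Theorem~\ref{thm:main}, whose proof --- as noted at the start of Section~\ref{subsec:flows} --- transfers verbatim from the discrete-time argument. I do not expect a genuine obstacle here; the only slightly delicate bookkeeping is the translation to continuous time of the empirical measures, Bowen balls, and the estimates in Lemma~\ref{lem:aux}, together with the routine verification that the tolerance $|l-t|<\eps$ furnished by the closing lemma yields, after discretising the time parameters, the comparison $p\le q\le(1+\eps)p$ demanded in Definition~\ref{def:closeable} (for large $t$ this holds with $p$ taken just below $\min\{t,l\}$ and $q=l$).
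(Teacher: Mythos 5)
Your proposal is correct and follows essentially the same route as the paper, which states this proposition as a summary of the preceding discussion: linkability of $\Per(\varphi)$ comes from the previous proposition (via \cite[Proposition 4.2]{CS}), and closeability from the remark that the closing lemma makes every recurrent point $\Per(\varphi)$-closeable, combined (as you spell out via Poincar\'e recurrence and Birkhoff) with the existence of a recurrent generic point for each ergodic measure. The extra details you supply are exactly the routine ones the paper leaves implicit, including the verbatim transfer of the discrete-time proofs to flows noted at the start of the flows subsection.
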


Therefore Theorem \ref{thm:main} generalizes \cite[Theorem 4.2]{CS} and \cite[Theorem 1.1]{CS} (these are results about density and genericity of ergodic invariant measures) and the remaining parts of Theorem \ref{thm:main} extend \cite[Theorem 4.2]{CS} and \cite[Theorem 1.1]{CS}.
In particular, we have the following result which is a consequence of the analysis above and findings of \cite{CS}.

\begin{theorem}
Let $M$ be negatively curved, connected, complete Riemannian manifold. Let $\mathcal{M}$ denote the set of Borel probability measures on the unit tangent bundle $T^1M$ invariant by the geodesic flow. We assume that the non-wandering set $\Omega$ of the geodesic flow $\varphi$ is nonempty.
Then $\varphi$ admits a local product structure and satisfies the closing lemma and Theorem \ref{thm:main} applies to $\varphi$.
\end{theorem}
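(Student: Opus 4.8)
The plan is to verify the two hypotheses of the preceding proposition—local product structure and the closing lemma—for the geodesic flow $\varphi$ on $T^1M$, restricted to (a neighborhood of) its non-wandering set $\Omega$, and then simply invoke the proposition together with Theorem~\ref{thm:main}. So the real content is reducing the geometric statement to what is already known for geodesic flows in variable negative curvature. First I would recall from the literature on CAT($-1$) / pinched negatively curved manifolds (Eberlein, and for the non-compact case the references compiled in \cite{CS}) that geodesic flows on negatively curved manifolds are Anosov-like in the following local sense: through each $v\in T^1M$ the weak/strong stable and unstable manifolds are given by the stable and unstable horospheres, they vary continuously with $v$, and they intersect transversally. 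This is precisely what yields a product structure: given $\eps>0$ one chooses $\delta>0$ so small that whenever $\rho(x,y)\le\delta$ the local strong unstable set of a small time-shift $\varphi_t(x)$ meets the local strong stable set of $y$ at a single point $z\in\wsue(\varphi_t(x))\cap\wsse(y)$ with $|t|\le\eps$; transversality and continuity of the horospherical foliations on the (locally compact) manifold $T^1M$ make this a local, uniform statement, so every point of $T^1M$ has a neighborhood $V$ in which $\varphi$ has a product structure. Hence $\varphi$ admits a local product structure.

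Next I would establish the closing lemma for $\varphi$. The standard argument is the hyperbolic one: an orbit segment of length $t$ that returns $\delta$-close to its starting point defines, after a small time adjustment, a map on a transversal to the flow which is a contraction in the stable direction and an expansion in the unstable direction; by the horospherical product structure one gets a local section on which the return map has a hyperbolic fixed point, producing a genuine periodic orbit of period $l$ with $|l-t|<\eps$ that $\eps$-shadows the segment for $0<s<\min\{t,l\}$. In variable negative curvature this is classical for compact manifolds; for a complete, not-necessarily-compact $M$ with nonempty non-wandering set $\Omega$, the needed uniformity is available on a neighborhood of $\Omega$ (recurrence forces the relevant orbit segments to stay in a region where curvature is pinched and the stable/unstable data are uniformly transverse), and this is exactly the setting treated in \cite{CS}. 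Thus every point of $T^1M$ has a neighborhood in which the closing lemma holds.

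With both hypotheses verified, the Proposition preceding this theorem gives that $\varphi$ has the linkability property, and the final Proposition of Subsection~\ref{subsec:flows} then gives that Theorem~\ref{thm:main} applies to $\varphi$; in particular one reads off density of periodic measures, the Poulsen alternative, residuality of ergodic (non-mixing) measures, existence of generic points for every invariant measure, and the oscillation and category statements, all on the measure center, which here equals $\Omega$. The one subtlety worth flagging is uniformity in the non-compact case: a priori the $\delta$ and $t_0$ in the closing lemma, and the $\delta$ in the product structure, could degenerate as one moves toward the ends of $M$. The main obstacle is therefore to argue that for the purposes of our two properties only orbit segments meeting $\Omega$ matter—indeed closeability is only required of generic points of ergodic measures, all of which are supported in $\Omega$, and linkability concerns periodic points, which also lie in $\Omega$—so the desired estimates need only hold uniformly on a relatively compact neighborhood of $\Omega$ (or on the convex core lift), where pinched negative curvature supplies exactly the uniform hyperbolicity of the horospherical foliations needed. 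Once this localization to $\Omega$ is made explicit, the verification reduces to the known geometric facts and the theorem follows.
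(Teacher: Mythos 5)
Your overall route is the same as the paper's: the paper proves this theorem in one stroke, by taking the local product structure and the closing lemma for the geodesic flow from the findings of \cite{CS} and then invoking the two propositions of Section~\ref{subsec:flows} (local product structure plus closing lemma gives linkability, the closing lemma gives closeability of recurrent points, hence Theorem~\ref{thm:main} applies). Your citation of \cite{CS} therefore carries the argument exactly as in the paper.

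The part of your write-up that is genuinely your own --- the attempted self-contained verification and the ``uniformity near $\Omega$'' fix --- is where I would be careful. First, the theorem assumes only negative curvature, not pinching, and $\Omega$ need not have compact closure (think of finite-volume cusped manifolds, where $\Omega=T^1M$ is non-compact, or geometrically infinite examples), so ``uniform hyperbolicity on a relatively compact neighborhood of $\Omega$'' is not available in the generality claimed; your Anosov-type return-map proof of the closing lemma would only cover the compact or uniformly hyperbolic situations, which is precisely not the point of this statement. Second, no such uniformity is actually needed: the paper's definitions of local product structure and of the closing lemma are pointwise-local (the constants $\delta$, $t_0$ may depend on the neighborhood $V$), and the verification in \cite{CS} for geodesic flows proceeds through convexity/comparison arguments for the distance function in the universal cover and the transversality of the horospherical foliations, not through a uniformly hyperbolic transversal map. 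So keep the reduction to \cite{CS} as the proof (as the paper does), and drop or rework the pinching/compact-neighborhood justification; also note that the identification of the measure center with $\Omega$ is a side fact (in general the measure center is only contained in the non-wandering set) and is not needed to apply Theorem~\ref{thm:main}.
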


\section{Counterexamples}\label{sec:counter}

In this section we present examples illustrating differences between our approach and known results.

\subsection{Independence of linkability and closeability}
We prove that closeability and linkability are independent of each other (Theorem \ref{thm:independence}).

\begin{proposition}\label{thm:independence}
   There is a shift space $X'$ which is not $\Per(X')$-closeable, but $\Per(X')$ is strongly linkable.
\end{proposition}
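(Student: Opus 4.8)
The goal is to construct a shift space $X'$ over a finite alphabet whose set of periodic points is strongly linkable but for which some ergodic measure has no closeable generic point. The plan is to build $X'$ as a coded system presented by a labelled graph $(G,\lab)$ in which \emph{all} loops pass through a single distinguished vertex $v_0$ --- by Proposition~\ref{concatenation-coded} this immediately gives strong linkability of $\Per(G,\lab)$, and if one arranges in addition that $\Per(X')=\Per(G,\lab)$ (for instance, by making the only periodic points those which are eventually forced to return to $v_0$), then $\Per(X')$ itself is strongly linkable. The delicate part is engineering the graph so that there is a single ``exotic'' minimal (or at least ergodic) measure $\mu$ supported on a subsystem that the periodic orbits approach in the weak$\ast$ topology but \emph{never} shadow: every generic point $x$ of $\mu$ has the property that for some $\eps_0>0$, for all large $n$ there is no periodic point $y$ with $\rho(T^j(y),T^j(x))<\eps_0$ for $0\le j<n$ and minimal period in $[n,(1+\eps_0)n]$.

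The concrete model I would use is a Sturmian-type (irrational rotation) subsystem glued into the graph. Take $v_0$ the base vertex, and attach to it a two-ended ``ladder'' of vertices along which the admissible labels realize, in the limit, exactly the Sturmian sequences for some fixed irrational $\alpha$, while any \emph{finite} excursion from $v_0$ along the ladder must eventually come back to $v_0$ (so it only realizes a Sturmian \emph{word}, never an infinite Sturmian sequence, as a periodic point). Thus $X'$ contains the Sturmian subshift $X_\alpha$ as the closed set of points that stay on the ladder forever, and $X_\alpha$ is disjoint from $\Per(X')$ since Sturmian sequences are not eventually periodic. The unique $T$-invariant measure $\mu$ on $X_\alpha$ (Sturmian systems are uniquely ergodic) is ergodic for $X'$. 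By Proposition~\ref{concatenation-coded} plus the requirement that $\Per(X')$ consists only of labels of loops through $v_0$, strong linkability holds. It remains to verify that no generic point of $\mu$ --- equivalently (by unique ergodicity) no point of $X_\alpha$ --- is $\Per(X')$-closeable: if $x\in X_\alpha$ and $y\in\Per(X')$ shadows the first $n$ symbols of $x$ with small error $\eps_0<1$, then since the alphabet is finite, shadowing with error $<1$ forces the first $n$ symbols of $y$ to \emph{equal} those of $x$, i.e. $y_0\cdots y_{n-1}=x_0\cdots x_{n-1}$ is a Sturmian word of length $n$; but $y$ is periodic with period $\approx n$, so $y$ must contain a Sturmian word of length $n$ whose cyclic extension (period between $n$ and $(1+\eps_0)n$) is admissible --- and one shows this is impossible for all large $n$ by a combinatorial/complexity argument on Sturmian words (their special structure: exactly $n+1$ factors of length $n$, the ``singular'' factors, rules out these near-periodic completions for $n$ large; alternatively, a periodic point of period $p$ in $X'$ gives an invariant measure at $\mathrm{weak}\ast$-distance bounded below from $\mu$ by an amount depending on how far a period-$p$ orbit can be from the Sturmian measure, contradicting $\rho$-closeness for $n$ large).

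Thus the main steps, in order, are: (1) fix the presenting graph $(G,\lab)$ with all loops through $v_0$ and with an embedded infinite Sturmian ladder; (2) verify $X'$ is a genuine shift space over a finite alphabet and $\Per(X') = \Per(G,\lab)$, so Proposition~\ref{concatenation-coded} gives strong linkability; (3) identify the Sturmian subshift $X_\alpha\subset X'$, note it is uniquely ergodic with measure $\mu$, hence $\mu\in\MTe(X')$; (4) prove no point of $X_\alpha$ is $\Per(X')$-closeable, using finiteness of the alphabet to upgrade $\eps_0$-shadowing to exact symbol agreement and then a Sturmian-combinatorics argument to forbid the required near-periodic completions.

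The main obstacle is step~(4): one must rule out, \emph{for all sufficiently large $n$ and all $\eps_0$} (or at least one fixed small $\eps_0$), the existence of a periodic point in $X'$ of minimal period in $[n,(1+\eps_0)n]$ agreeing with a given Sturmian sequence on its first $n$ coordinates. The cleanest route is probably measure-theoretic rather than purely combinatorial: a periodic point $y$ of period $p\le (1+\eps_0)n$ that agrees with $x$ on coordinates $0,\dots,n-1$ produces a CO-measure $\gamma(y)$ with $\D(\gamma(y),\Emp(x,n))\le \eps_0$ by Lemma~\ref{lem:close}; but $\Emp(x,n)\to\mu$ since $X_\alpha$ is uniquely ergodic, so $\gamma(y)$ would have to be $\eps_0$-close to $\mu$, and one derives a contradiction from the fact that any invariant measure of $X'$ supported (partly) off the Sturmian ladder --- which every CO-measure is, since periodic orbits must pass through $v_0$ and off the ladder --- is bounded away from $\mu$ by the amount of mass it must place near $v_0$ versus the corresponding mass $\mu$ places there. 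Making this last quantitative estimate precise (choosing $v_0$'s neighborhood and the ladder labels so that $\mu$ gives measure zero, or at least very small measure, to a cylinder that every CO-measure charges by a definite amount) is where the real care is needed, and it dictates the fine details of the construction in step~(1).
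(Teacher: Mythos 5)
Your overall strategy is the same as the paper's: present $X'$ as a coded system whose presenting graph has all loops through a distinguished vertex $v_0$ (so Proposition~\ref{concatenation-coded} gives strong linkability), embed a minimal, uniquely ergodic, aperiodic subshift (the paper uses an arbitrary such $Y$; Sturmian is the canonical choice), and show its unique measure $\mu$ is not closeable by a measure-theoretic separation: CO-measures must stay weak$\ast$-bounded away from $\mu$, while closeability of any generic point would, via Lemma~\ref{lem:close}, force CO-measures to accumulate on $\mu$. However, the step you yourself flag as ``where the real care is needed'' is a genuine gap, not a routine detail, and as stated your justification for it is false. The fact that every periodic orbit must pass through $v_0$ does \emph{not} give a cylinder that every CO-measure charges by a definite amount: if the return from depth $n$ of the ladder to $v_0$ costs only $O(1)$ symbols, a periodic point of period $\approx n$ can agree with a Sturmian point on $n-O(1)$ coordinates, and its CO-measure charges the ``off-ladder'' cylinder only with mass $O(1/n)\to 0$, so CO-measures \emph{do} approximate $\mu$ and the point \emph{is} closeable. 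This is exactly the situation of the $\beta$-shifts and $S$-gap shifts earlier in the paper (Propositions~\ref{prop:beta-closeable} and~\ref{prop:S-gap-closeable}), where all loops also pass through $v_0$ and yet closeability holds. So the conclusion you need depends entirely on the unconstructed ``fine details of step~(1)''.

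The paper's construction supplies precisely the missing design: from the vertex $u_{i}$ at depth $i$ of the forward ladder, the only way back to $v_0$ is a skew edge followed by the chain of backward edges $v_{i-1}\to v_{i-2}\to\cdots\to v_0$, and \emph{all} of these are labelled by a fresh symbol $2$ not occurring in $Y$. Thus every loop spends at least half of its length on $2$-labelled edges, so every CO-measure gives the clopen cylinder $[2]$ mass at least $1/2$, whereas $\mu([2])=0$; since $[2]$ is clopen, this bounds every CO-measure away from $\mu$ uniformly, and non-closeability of $\mu$ follows for \emph{every} generic point of $\mu$ (note, incidentally, that your reduction ``equivalently, no point of $X_\alpha$'' is also slightly off: unique ergodicity says every point of $X_\alpha$ is generic for $\mu$, not that every $\mu$-generic point of $X'$ lies in $X_\alpha$; only the uniform CO-measure separation closes that loophole, your combinatorial alternative in step~(4) would not). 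So: same architecture as the paper, but the proposal is incomplete exactly at the quantitative heart of the argument --- one must force the return path to have length proportional to the excursion and carry labels invisible to $\mu$, and without that specification the non-closeability claim does not follow.
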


\begin{proof}
Let $V=\{v_0,v_1,v_2,\ldots\}$ and $U=\{u_1,u_2,u_3,\ldots\}$ be two disjoint countably infinite sets.
By $\Gamma'=(V',E')$ we denote a directed graph, whose set of vertices is $V'=V\cup U$ and the set of edges is
\[
	E'=E_f\cup E_b \cup E_s\cup\{v_0\to v_0\},
\]	
where
\begin{align*}
E_f&=\{v_0\to u_1\}\cup \{u_i\to u_{i+1}\colon i\ge 1\}\quad\text{(\emph{forward edges})},\\
E_b&=\{v_{i+1}\to v_i\colon  i\ge 0\}\quad\text{(\emph{backward edges})},\\
E_s&=\{u_{i+1}\to v_i\colon i\ge 0\}\quad\text{(\emph{skew edges})}.
\end{align*}

\begin{figure}
\includegraphics[scale=0.99]{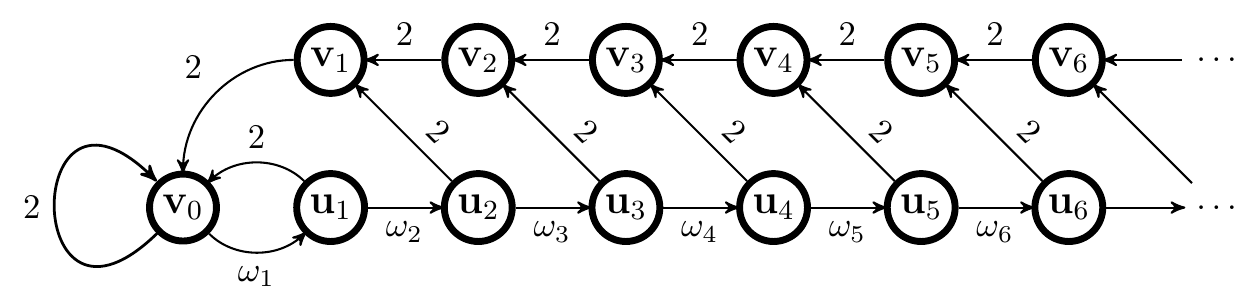}
\caption{Labelled graph presenting a non-closeable system $X'$ with linkable set of periodic points.}
\label{fig:gamma-quattro}
\end{figure}

Let $Y$ be a minimal, uniquely ergodic, 
and non-periodic binary shift $Y$ over $\{0,1\}$.
Take $(\omega_i)_{i=1}^\infty\in Y$. 

Consider the coded system $X'$ presented by the labelled graph $(\Gamma',\Theta')$ depicted on Figure \ref{fig:gamma-quattro}.
In other words $\Theta'\colon E'\to\{0,1,2\}$ is given by
\[
\Theta'(e)=
\begin{cases}\omega_i,& \text{ if $\term(e)=u_i$ for $i\in\N$},\\
2,& \text{ otherwise}.
\end{cases}
\]
It is easy to check that the set of periodic points of this system is linkable (we can freely concatenate
periodic orbits as they all pass through $v_0$). On the other hand $Y$ is a subsystem of $X'$, and the unique invariant measure concentrated on $Y$ is neither CO-measure, nor closeable,
as the measure of the set of all points in $X'$ starting with $2$ is at least $1/2$ for every periodic measure.
\end{proof}

\begin{proposition}
    There is a shift space $X''$ which is $\Per(X'')$-closeable, but $\Per(X'')$ is not linkable.
\end{proposition}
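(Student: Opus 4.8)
The plan is to construct $X''$ as a coded system in which the closing lemma (hence $\Per(X'')$-closeability of every recurrent point, and in particular of every generic point of an ergodic measure) holds, yet the set of all periodic points splits into ``incompatible families'' that cannot be linked. The natural model is a disjoint union analogue in the spirit of the full tent map example $S\colon[-1,1]\to[-1,1]$ given in Section~\ref{sec:linkability}, or of the Dyck shift: take two shift spaces $X_1$ and $X_2$ (for instance two copies of the full shift on disjoint alphabets, or two $\beta$-shifts), each of which is strongly linkable and closeable, and glue them along a common fixed point or common wandering ``gate'' in such a way that \emph{every} orbit segment returning close to its start can still be closed inside whichever piece it lives in, but there is no periodic orbit that spends a prescribed positive fraction of time near a periodic point $y_1\in\Per(X_1)$ and the complementary fraction near a periodic point $y_2\in\Per(X_2)$.

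First I would give the explicit construction: let $\mathcal A_1$ and $\mathcal A_2$ be disjoint alphabets, let $X''$ be the set of sequences that are eventually in $\mathcal A_1^{\mathbb N}$ or eventually in $\mathcal A_2^{\mathbb N}$ after passing through a distinguished symbol $\ast$, i.e.\ present $X''$ by a labelled graph with two irreducible ``lobes'' $G_1$ and $G_2$ joined only through transient edges carrying the label $\ast$, so that no loop visits both lobes. (One should arrange the transient part so that $X''$ is still transitive, as in the Dyck-shift remark; alternatively one may accept $X''$ being a disjoint union of two transitive pieces, which already suffices for the statement.) Then $\Per(X'')=\Per(G_1)\cup\Per(G_2)$, and every periodic orbit is contained in exactly one lobe.

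Second I would verify closeability. Because the system is coded by a right-resolving graph and every loop stays in one lobe, the standard argument (as in Propositions~\ref{prop:beta-closeable} and~\ref{prop:S-gap-closeable}) applies lobe by lobe: given a generic point $x$ of an ergodic measure $\mu$, the measure $\mu$ is supported in a single lobe (since the ``gate'' symbol $\ast$ is wandering and hence has measure zero, so $\mu$-a.e.\ point eventually stays in one lobe and $x$ may be taken in that lobe); then the initial orbit segments of $x$ can be closed by loops of that lobe, giving $\Per(X'')$-closeability. One should also note the gate contributes zero to the measure center, so this causes no difficulty.

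Finally I would show non-linkability. Pick a periodic point $y_1\in\Per(G_1)$ and $y_2\in\Per(G_2)$ and take $\lambda=1/2$ and $\eps$ small. Any $z\in\Per(X'')$ with $z\in B(y_1,p_1,\eps)$ must begin with symbols from $\mathcal A_1$ (for $\eps<1$ in the shift metric this forces agreement on the first coordinate, hence on the lobe); but then, being periodic and trapped in lobe $G_1$, the orbit of $z$ never enters lobe $G_2$, so $T^{q_1}(z)$ cannot be $\eps$-close to $y_2$ on a long block — contradiction with condition~(4) of Definition~\ref{def:linkability-2}. Hence $\Per(X'')$ is not linkable. The main obstacle, and the only place requiring care, is arranging the gluing so that $X''$ is genuinely \emph{transitive} (mirroring the Dyck-shift comment) while keeping the two lobes loop-disjoint; if one is willing to settle for a non-transitive example the construction collapses to essentially the tent-map example already displayed, and the verification is immediate.
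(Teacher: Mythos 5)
Your proposal is correct: the statement does not require transitivity, so your fallback example (the disjoint union of two full shifts over disjoint alphabets, viewed as a single shift space over the union alphabet) already proves it — every ergodic measure charges exactly one of the two closed invariant pieces, a generic point can be chosen inside that piece, and since each piece has the periodic specification property that point is closeable with respect to the periodic points of its piece (Proposition~\ref{prop:spec-closeable}), hence with respect to $\Per(X'')$; while for fixed points $a^\infty,b^\infty$ taken from different pieces, $\lambda=\tfrac12$ and $\eps\le\tfrac12$ (note you need $\eps\le\tfrac12$, not merely $\eps<1$, to force agreement in the first coordinate), any $z\in\Per(X'')$ with $z\in \db(a^\infty,p_1,\eps)$ lies entirely in the first piece, so condition (4) of Definition~\ref{def:linkability-2} fails and $\Per(X'')$ is not linkable. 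This is, however, a different mechanism from the paper's. The paper removes the loop at $v_0$ from the graph of Proposition~\ref{thm:independence} and gets a \emph{transitive} coded system in which every loop through $v_0$ carries exactly as many $0$'s as $1$'s; there periodic orbits \emph{can} shadow both fixed points $0^\infty$ and $1^\infty$, but only in the forced proportion $\tfrac12:\tfrac12$, so the measure $\tfrac13\mu_0+\tfrac23\mu_1$ lies in the convex hull of $\MTp(\Per(X''))$ yet at positive distance from every CO-measure, contradicting Theorem~\ref{thm:linkable} if $\Per(X'')$ were linkable. Thus the paper's example additionally shows that non-linkability is not an artifact of topological decomposability, and it recycles the graph of the companion example. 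One caveat on your more ambitious transitive variant: a labelled-graph presentation with two lobes ``joined only through transient edges and no loop visiting both lobes'' cannot yield a transitive shift space in the naive way, because transitivity forces words of each lobe to be continuable into the other, and paths in both directions between the lobes concatenate to loops meeting both of them; to keep transitivity one needs a different device — either frequency constraints on the gate symbol in the spirit of Proposition~\ref{thm:mix} (which kill all periodic points through the gate) or the proportion-forcing of the paper's graph. Since you explicitly fall back on the non-transitive example when this obstacle appears, your proof is complete as written.
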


\begin{proof}
We continue using the notation in the proof of the above proposition.

Let $\Gamma''=(V',E'')$ be the directed graph whose set of vertices is $V'=V\cup U$ and the set of edges is
\[
	E''=E_f\cup E_b \cup E_s=E'\setminus\{v_0\to v_0\},
\]	
that is, $\Gamma''$ is $\Gamma'$ with the loop at $v_0$ removed.

Let $X''$ be a coded system presented by the labelled graph $(\Gamma'',\Theta'')$ depicted on Figure~\ref{fig:gamma-tres}.
In other words,
\[
\Theta''(e)=\begin{cases}0,& \text{ for $e\in E_f$},\\
1,& \text{ for $e\in E_b\cup E_s$}.
\end{cases}
\]

Note that the set of all periodic points is not linkable. To see this let
$x_0=0^\infty$ and $x_1=1^\infty$ be fixed points of $X''$ and $\mu_0$ and $\mu_1$ be the point masses concentrated on them.
It is easy to see that the invariant measure $(1/3)\mu_0+(2/3)\mu_1$ cannot be close to any CO-measure.
On the other hand $X''$ is closeable, as every recurrent point is either $x_0$ or $x_1$ or a path labelled
by its initial segment must pass through $v_0$ infinitely many times.
\begin{figure}
\includegraphics[scale=1]{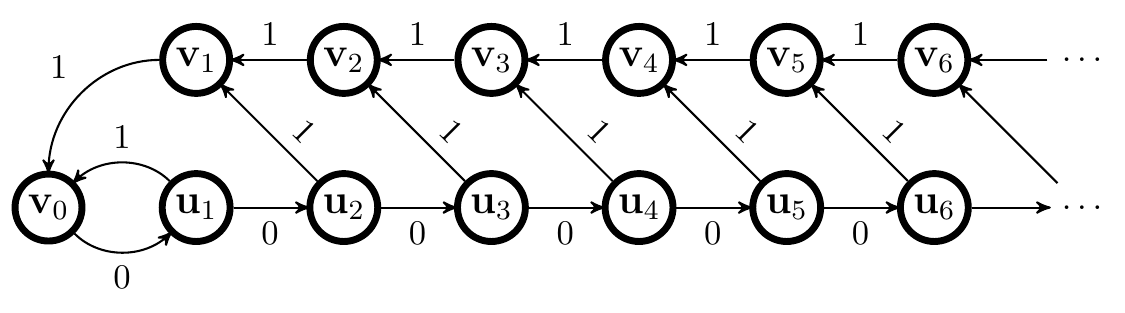}
\caption{Labelled graph presenting a closeable system $X''$ whose periodic points are not linkable.}
\label{fig:gamma-tres}
\end{figure}
\end{proof}

\subsection{Systems with ``small''  measure center}

We show that the measure center $\CM(T)$ can
be a nowhere dense subset of $X$  (thus negligible from the topological point of view), even for mixing systems. For this reason we need to restrict our conclusions to the measure center $\CM(T)$.

\begin{proposition}\label{thm:mix}
Let $X\subset\Omega_r$ be a shift space such that $\CM(X)=X$. There exists a mixing shift space $Y\subset \Omega_{r+1}$ such that $X\subset Y$ and $\CM(Y)=X$.
\end{proposition}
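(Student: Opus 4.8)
The plan is to build $Y$ from $X$ by adjoining a single new symbol, say $r$, together with connecting words that let us move from any finite word of $X$ to any other finite word of $X$ in a controlled number of steps, while arranging that all the extra orbits created are transient in the measure-theoretic sense. Concretely, I would fix an enumeration $w_1, w_2, \ldots$ of the language $\lang(X)$ and, for each $k$, introduce ``bridge'' blocks of the form $u w_k$ where $u$ is a word in the new symbol $r$ whose length grows with $k$ (e.g. $u = r^{k}$). The shift space $Y$ is then defined as the smallest shift space over $\alf_{r+1}$ containing $X$ and all sequences obtained by freely concatenating blocks of $X$'s language with these bridge blocks. Equivalently one presents $Y$ by a labelled graph: take a presentation (or the full follower-set graph) of $X$, add one vertex carrying a loop labelled $r$ through which every bridge passes, and wire the bridges so that after reading $r^k$ one may enter the part of the graph spelling out $w_k$. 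Transitivity, and in fact mixing, is then immediate: given any two words $v, v' \in \lang(Y)$, extend $v$ on the right to reach the $r$-loop, spin around it to absorb any prescribed number of extra symbols, then exit via a bridge $r^k w_k$ with $w_k$ having $v'$ as a prefix; this produces, for all sufficiently large $n$, a word in $\lang(Y)$ of the shape $v \, \ast^n \, v'$, which is exactly the mixing condition $U \cap \sigma^{-n}(V) \neq \emptyset$ for cylinders.

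The second task is to show $\CM(Y) = X$. One inclusion is free: $X \subset Y$ is $\sigma$-invariant and closed, and since $\CM(X) = X$ every open subset of $X$ carries positive mass for some $\sigma$-invariant measure on $X$, which is also a $\sigma$-invariant measure on $Y$; hence $X \subset \CM(Y)$. For the reverse inclusion I must show that every open set meeting $Y \setminus X$ is universally null, i.e. that no $\sigma$-invariant Borel probability measure on $Y$ gives positive mass to any cylinder $[v]$ with $v \notin \lang(X)$. The key observation is that any word $v \in \lang(Y) \setminus \lang(X)$ must contain at least one occurrence of the new symbol $r$; and by construction every occurrence of $r$ sits inside a bridge block $r^k w_k$, so an occurrence of $r$ forces, a bounded distance later, a block $w_k$ whose length is comparable to the number of $r$'s just seen (this is where the growth $|u| = k \to \infty$ is used). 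Consequently the density of the symbol $r$ along any orbit of $Y$ is zero: a frequency-$\alpha$ appearance of $r$ would force arbitrarily long runs of $r$'s, hence arbitrarily long $r$-free blocks $w_k$ following them, contradicting a positive limiting frequency — a standard pigeonhole/averaging argument. By the ergodic theorem applied to the indicator of $[r]$, every ergodic (hence every) $\sigma$-invariant measure $\mu$ on $Y$ satisfies $\mu([r]) = 0$, and then $\mu$ is supported on sequences with no occurrence of $r$, i.e. on $X$. Therefore $\mu(X) = 1$ for all $\mu \in \M_\sigma(Y)$, which gives $\CM(Y) \subseteq X$.

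The main obstacle I anticipate is \emph{not} the mixing property (which the bridge-through-a-loop construction handles cleanly) but the bookkeeping needed to guarantee that $Y$ is genuinely a shift space containing $X$ as a subsystem and that the $r$-density argument is airtight: one must choose the lengths of the connecting $r$-blocks carefully enough that (a) $X$ itself is recovered as the set of $r$-free points of $Y$ (so that closure of the bridge-generated sequences does not accidentally create new $r$-free sequences outside $\lang(X)$), and (b) the forced-long-$w_k$-after-long-$r^k$ implication survives passing to the closure and to limits of empirical measures. A clean way to organize (a) is to take the set of forbidden words for $Y$ to be exactly the forbidden words of $X$ restricted to $r$-free windows, together with rules saying $r$ may only appear in the licensed bridge patterns; then an $r$-free point of $Y$ avoids all forbidden words of $X$ and lies in $X$. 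Once the construction is pinned down, items (mixing) and ($\CM(Y)=X$) follow as sketched, and one records that $X$ is nowhere dense in $Y$ whenever $X \neq Y$, i.e. whenever $\lang(X) \neq \alf_r^*$ or more simply because every cylinder of $Y$ can be extended by the symbol $r$.
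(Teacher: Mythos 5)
Your overall strategy (adjoin one new symbol $r$, use it to create mixing transitions, and then argue that $r$ has zero frequency so that every invariant measure lives on the $r$-free part, which should be $X$) is the same strategy as the paper's, but your implementation has a genuine gap precisely at the step you flag as ``bookkeeping'': nothing in your construction actually controls the density of $r$, and in fact $\CM(Y)=X$ fails for the shift space you build. Since the bridge blocks $r^k w_k$ exist for every $k$ and may be freely concatenated, $\lang(Y)$ contains $r^k$ for all $k$; as $Y$ is closed, the limit point $r^\infty$ lies in $Y$, and $\delta_{r^\infty}$ is an invariant measure charging $[r]$, so $r^\infty\in \CM(Y)\setminus X$. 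Bounding the $r$-runs does not save the argument either: with single-$r$ bridges and a one-letter word $w\in\lang(X)$, the periodic point $(rw)^\infty$ belongs to $Y$ and carries an invariant measure with $\mu([r])=1/2$. The zero-density claim itself is unjustified: in an enumeration of $\lang(X)$ the length of $w_k$ bears no relation to $k$ (enumerating by length gives $|w_k|=O(\log k)$, so after a run $r^k$ the forced $r$-free block is much \emph{shorter}, not longer), and even a block of length ``comparable'' to $k$ would only give positive, not zero, frequency of $r$. Finally, a density estimate checked on the generating concatenations does not automatically pass to the closure, which is where the invariant measures live; one needs a constraint that is hereditary, i.e.\ imposed on all subwords.

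This is exactly how the paper's proof differs. There $\lang(Y)$ is \emph{defined} by a subword-level sparsity condition: every word occurring in an allowed word either lies in $\lang(X)$ or, if its length is between $2^{k-1}+1$ and $2^k$, contains fewer than $k+1$ occurrences of $r$. Being a condition on all subwords, it defines a shift space, survives passage to limits, recovers $X$ as the $r$-free part, and forces the frequency of $r$ to be zero along \emph{every} point of $Y$, whence $\mu([r])=0$ for every invariant measure and $\CM(Y)\subseteq Y\cap\Omega_r=X$ (with equality since $\CM(X)=X$). Mixing still holds because any word of $\lang(Y)$ can be prolonged by arbitrarily long words of $\lang(X)$ and then joined to any other word by inserting a \emph{single} $r$, so no long $r$-runs are ever needed. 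To repair your proof you would have to replace the growing $r$-blocks by such a hereditary logarithmic bound on the number of $r$'s per window (or an equivalent device); as written, the forced-long-block pigeonhole step fails and the conclusion $\CM(Y)=X$ is false for your $Y$.
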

\begin{proof} To specify $Y$ we will describe its language. Let $\mathcal{W}$ be the collection of all words over $\alf_{r+1}$ such that for any word $u$ occurring in $w$ either $u\in\lang(X)$ or if $2^{k-1}+1\le \lvert u\rvert \le 2^{k}$, then the symbol $r$ occurs at less than $k+1$ positions in $u$. It is clear that $\mathcal{W}$ fulfills the assumptions of \cite[Proposition 1.3.4]{LM}, and hence there is a shift space $Y$ with $\mathcal{W}=\lang(Y)$. Moreover, $\words\cap\mathcal{W}=\lang(X)$, in particular $X\subset Y$. Then
\[
\lim_{n\to\infty} \frac{1}{n}\Big\lvert\big\{1\le j \le n : \omega_j=r
\big\}\Big\rvert=0.
\]
for every $\omega\in Y$, hence $\mu([r])=0$ for every $Y$-invariant measure $\mu$. Therefore
the measure center of $Y$ is contained in $Y\cap \Omega_r=X$. It must be then equal $\CM(X)=X$.
Now fix any two cylinders $[u]$ and $[v]$ in $Y$.
For every $k>0$ there is a word $w_k$ of length $k$ in $\lang(X)$ such that $uw_k\in\lang(Y)$.
For all sufficiently large $k$ we have $uw_k(r)v\in \lang(Y)$, and we conclude
that $Y$ is mixing.
\end{proof}

\begin{remark}
Using results and techniques developed in \cite{KKO}  one may replace \emph{mixing} in the statement of Proposition \ref{thm:mix} by any one of
\begin{enumerate}
\item weakly mixing, but not mixing;
\item totally transitive, but not weakly mixing;
\item transitive, but not totally transitive;
\item not transitive.
\end{enumerate}
\end{remark}

%
%
%
%
\begin{figure}
\includegraphics[scale=0.7]{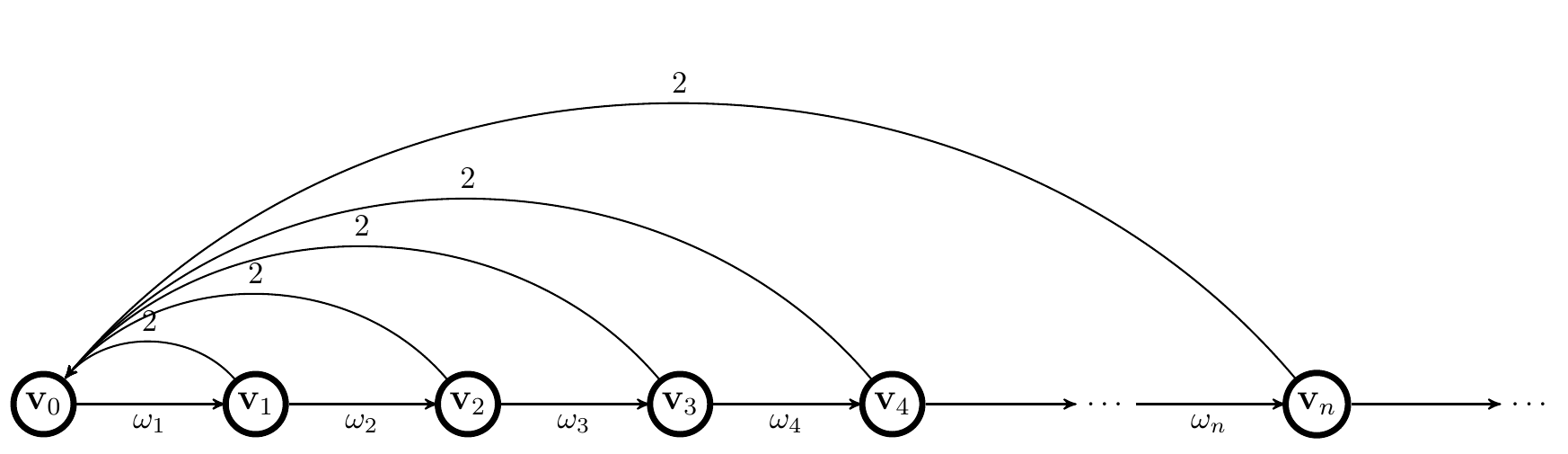}
\caption{Graph $(\Gamma,\Theta)$ used in the proofs of Propositions \ref{prop:counter-mult-mme} and \ref{prop:counter-entropy-gap}.}
\label{fig:gamma-omega}
\end{figure}

\subsection{Closeability, linkability, and intrinsic ergodicity}
The next two examples prove that the closeability property and strong linkability imply neither intrinsic ergodicity, nor entropy density.
Both are inspired by \cite{Pe}.

\begin{proposition}\label{prop:counter-mult-mme}
There is a shift space $X$ which is closeable and linkable, but has multiple measures of maximal entropy.
\end{proposition}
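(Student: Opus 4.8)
The plan is to realize $X$ as a coded system presented by the labelled graph $(\Gamma,\Theta)$ of Figure~\ref{fig:gamma-omega}. Following \cite{Pe}, this graph is built around a distinguished vertex $v_0$ through which every closed path of $\Gamma$ passes, and it carries two ``large'' sub-pieces of equal exponential complexity which are joined only through a comparatively ``thin'' transitional region incident to $v_0$. Granting such a presentation, the statement splits into three essentially independent parts: strong linkability of $\Per(X)$, closeability of $X$, and the presence of (at least) two distinct ergodic measures of maximal entropy.

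For linkability, note first that the set $\Per(\Gamma,\Theta)$ of periodic points presented by closed paths of $\Gamma$ through $v_0$ is strongly linkable by Proposition~\ref{concatenation-coded}: any two such closed paths may be concatenated, in either order and with arbitrary multiplicities, into a new closed path, which is exactly what Definition~\ref{def:linkability} requires. To pass from $\Per(\Gamma,\Theta)$ to $\Per(X)$ one invokes Lemma~\ref{lem:linkablity-criterion}: it suffices to check that every periodic point of $X$ is $\Per(\Gamma,\Theta)$-closeable, and this follows from the design of $(\Gamma,\Theta)$ in the same way that condition~\eqref{beta-condition} is used in the proof of Proposition~\ref{prop:beta-closeable} — every word in the language of $X$ labels a path starting at $v_0$, and such a path can always be completed to a loop through $v_0$ whose label closes the prescribed initial segment.

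Closeability of $X$ is then established exactly as in Propositions~\ref{prop:beta-closeable} and~\ref{prop:S-gap-closeable}. Let $\mu\in\MTe(X)$ and let $x=(x_i)_{i\ge 1}$ be a generic point for $\mu$. If $\mu$ is one of the (at most countably many) measures carried by periodic orbits lying on ``peripheral'' loops, then $\mu$ is already a CO-measure and there is nothing to prove. Otherwise $\mu$ gives positive mass to some cylinder every occurrence of which forces the presenting path to be at $v_0$; hence the path presenting $x_1x_2\cdots$ returns to $v_0$ for infinitely many prefix lengths $k$, and each such prefix is the label of a loop through $v_0$, which yields a point of $\Per(\Gamma,\Theta)$ whose orbit $\eps$-traces $x,\sigma(x),\dots,\sigma^{k-1}(x)$ with period between $k$ and $k+O(1)$. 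By Lemma~\ref{lem:close} this proves $x$ is $\Per(\Gamma,\Theta)$-closeable, so $X$ is closeable; together with the previous paragraph and Lemma~\ref{lem:linkablity-criterion}, $\Per(X)$ is strongly linkable.

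It remains to show that $X$ is not intrinsically ergodic, and this is the one point where the combinatorics of Figure~\ref{fig:gamma-omega} is essential and is taken from \cite{Pe}. The plan is to compute $h_{\mathrm{top}}(X)$ from the first-return structure at $v_0$ — it is the unique $h$ for which the generating function of the labels of first-return loops equals $1$ at $e^{-h}$ — and to verify that the maximum of $\mu\mapsto h_\mu$ over $\mu\in\MT(X)$ is attained at two mutually singular ergodic measures. The main obstacle, and the delicate balance achieved by the Petersen-type construction, is the following conflict of requirements: the transitional region joining the two large sub-pieces must be rich enough for periodic orbits to be concatenated across them (so that $\Per(X)$ is linkable), yet it must be designed with strictly smaller exponential growth rate than either sub-piece, so that shuttling between them is invisible to the topological entropy and does not raise $h_{\mathrm{top}}(X)$ above the common value carried by each sub-piece. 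Once this balance is set up, the two entropy-maximising measures supplied by the construction are distinct, and the proof is complete. (The very same graph, with the two sub-pieces kept asymmetric in a suitable sense, is what is used in Proposition~\ref{prop:counter-entropy-gap} to defeat entropy density.)
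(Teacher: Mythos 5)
The linkability and closeability half of your plan is in the right spirit (all loops through one vertex, Proposition~\ref{concatenation-coded} plus Lemma~\ref{lem:linkablity-criterion}), but the heart of the statement --- the existence of \emph{two} measures of maximal entropy --- is where your argument has a genuine gap. You never actually specify the presentation, and the mechanism you propose (``two large sub-pieces of equal exponential complexity joined through a thin transitional region at $v_0$'') would not in general produce two maximal measures: if both high-entropy parts are realized by loops through $v_0$, then the freedom to concatenate them --- the very freedom you invoke for linkability --- typically merges them into a \emph{single} fully supported measure of maximal entropy; this is exactly why transitive sofic shifts, and more generally synchronized systems whose boundary entropy lies strictly below the Gurevi\v{c} entropy, are intrinsically ergodic. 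The paper's construction instead places the second maximal measure ``at infinity'': the graph of Figure~\ref{fig:gamma-omega} is a single ray $v_0\to v_1\to v_2\to\cdots$ with back edges $v_i\to v_0$ labelled $2$, the forward edges are labelled by a fixed point $\omega$ of a strictly ergodic shift $Y$ whose entropy is tuned to equal the Gurevi\v{c} entropy $h_\Gamma=\log\tfrac12(1+\sqrt5)$ of the graph. Non-periodicity of $\omega$ makes $(\Gamma,\Theta)$ follower separated, $2$ is a magic word, so $(\Gamma,\Theta)$ is the Fisher cover, the Markov boundary is $\partial_M X=Y$, and since $\Gamma$ is strongly positive recurrent, Thomsen's Theorem 7.4(b1) in \cite{Th} yields exactly two ergodic measures of maximal entropy: one fully supported and one equal to the unique measure on the boundary $Y$. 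Without this boundary idea (or a substitute for it), your ``delicate balance'' paragraph is an assertion rather than a proof, and for the construction you sketch it is likely false.

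The same omission shows up in your closeability dichotomy: the case $\mu([2])=0$ is not exhausted by ``measures carried by periodic orbits on peripheral loops''. In the actual example the only such measure is the unique measure of the minimal positive-entropy subshift $Y$ --- precisely the second measure of maximal entropy --- which is neither periodic nor a CO-measure; its closeability needs its own (easy) argument, using that $\omega$ is a generic point whose prefixes label paths starting at $v_0$ that close with a single back edge labelled $2$. Relatedly, your justification for extending linkability from $\Per(\Gamma,\Theta)$ to $\Per(X)$ (``every word in the language of $X$ labels a path starting at $v_0$'') is false for this labelling, since a non-prefix subword of $\omega$ need not label any path from $v_0$; the correct observation is that every periodic point of $X$ must contain the symbol $2$ (as $Y$ has no periodic points) and is therefore itself presented by a loop through $v_0$, so that $\Per(X)=\Per(\Gamma,\Theta)$ and Proposition~\ref{concatenation-coded} applies directly.
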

\begin{proof}
Let $Y$ be a strictly ergodic shift over $\{0,1\}$ with topological entropy $h(Y)=\log(1/2(1+\sqrt{5}))$ (logarithm of the golden mean).
We construct a labelled graph presenting a coded system $X$ with the desired properties. To analyze measures of maximal entropy of $X$ we apply the results of Thomsen \cite{Th}. We indicate that the appropriate assumptions are satisfied and we refer the reader to \cite{Th} for more details.

Let $\Gamma=(V,E)$ be a directed graph, whose vertices are labelled $v_0,v_1,v_2,\ldots$,
$E=E_f\cup E_b$ where $E_b=\{v_i\to v_0:i\ge 1\}$ and  $E_f=\{v_{i-1}\to v_i:i\ge 1\}$ (see Figure \ref{fig:gamma-omega}). It is easy to see that
$\Gamma$ is strongly positive recurrent. This means that
\[
\limsup_{n\to\infty}\frac{1}{n}\log l_n< \lim_{n\to\infty}\frac{1}{n}\log r_n,
\]
where $r_n$ is the number of
distinct paths of length $n\ge0$ from $v_0$ to $v_0$ in $\Gamma$ and by $l_n$ we denote the number of paths of length $n$ staring and ending at $v_0$ and not visiting $v_0$ in between (the limit on the right hand side exists since the sequence $r_n$ is clearly submultiplicative, that is, $r_{k+l}\le r_k\cdot r_l$ for all $l,k\ge 1$). A strongly positive recurrent graph is positive recurrent (see \cite{Pe,Th}).

Given
$\omega=(\omega_i)_{i\in\N}\in Y$ we define a labeling $\Theta\colon E\to\{0,1,2\}$ by
\[
\Theta(e)=\begin{cases}
\omega_{i},&\text{if }e=v_{i-1}\to v_{i}\text{ for some }i\in\N,\\
2,&\text{if }e=v_i\to v_0\text{ for some }i\in \N.
\end{cases}
\]
Let $X$ be the coded system presented by $(\Gamma,\Theta)$.

We note that $(\Gamma,\Theta)$ is \emph{follower separated}, that is, if $u,v\in V$ are different vertices then the sets of labels of all paths starting at $u$ and $v$ respectively are different. This holds because $\omega$ is non-periodic. Moreover, $2$ is a \emph{magic word} of $(\Gamma,\Theta)$ because all paths with label $2$ terminate at $v_0$.
This implies that $(\Gamma,\Theta)$ is the Fisher cover of the coded system $X$ and theory from \cite{Th} applies to $X$.
Given a word $w\in\lang(X)$ let $F(w)$ denote the set of all words $u$ such that the concatenation $wu$ is in $\lang(X)$, and $P(w)$ be the set of all words $u$ such that $uw$ is in $\lang(X)$.
The \emph{Markov boundary} of $X$ (see \cite[pp. 1236--37]{Th}) is the shift space $\partial_M X$ with the language
\[
\partial_M \lang(X)=\{w\in\lang(X):\#\{F(xw):x\in P(w)\}=\infty\}.
\]
It is easy to see that $\partial_M X=Y$.
We recall that the \emph{Gurevi\v{c} entropy} of $\Gamma$ is defined bye $h_\Gamma=-\log R$, where $R$ is the radius of convergence of the power series $\sum r_n z^n$. Straightforward computations show that $h(\partial_M X) = h_\Gamma$. Since $\Gamma$ is positively recurrent, we conclude by \cite[Theorem 7.4(b1)]{Th} that there are two ergodic measures of maximal entropy: one fully supported, and one supported on the Markov boundary. In particular, $X$ is not
intrinsically ergodic.

On the other hand it is clear that $X$ is $\Per(X)$-closeable and $\Per(X)$ is strongly linkable.
\end{proof}

\subsection{Density vs. entropy density of ergodic measures}

The following three properties may or may not be satisfied by $\MT(X)$.
\begin{enumerate}
  \item\label{arc-con} $\MTe(X)$ is arc-connected;
  \item\label{dense} $\MTe(X)$ is dense in $\MT(X)$;
  \item\label{e-den} $\MTe(X)$ is entropy dense $\MT(X)$.
\end{enumerate}
These properties were discussed on Vaughn Climenhaga's blog (\cite{VC}). Clearly, \eqref{e-den}$\Rightarrow$\eqref{dense}$\Rightarrow$\eqref{arc-con}.

The following result (though, the given construction does not have periodic points at all and hence cannot be related to the concepts linkability and closeability) illustrates that ergodic measures being dense and  measures being entropy dense are distinct concepts, that is, \eqref{dense} does not imply \eqref{e-den}\footnote{We are grateful to Tomasz Downarowicz for providing references to~\cite{D-book}.}. It also solves a problem stated by Climenhaga \cite{VC}. Another counterexample is given in Proposition \ref{prop:counter-entropy-gap}. Note that it also proves that our assumptions of closeability and linkability are strictly weaker than the $g$-almost product property of Pfister and Sullivan \cite{PS1}.

\begin{proposition}\label{prop:counter}
There exists a minimal dynamical system $(X,T)$ (a Toeplitz subshift) such that $\MT(X)$ is the Poulsen simplex and there is only one
ergodic measure with positive entropy (in particular, $T$ is not entropy dense).
\end{proposition}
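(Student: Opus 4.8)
The plan is to use Downarowicz's realization machinery to produce a Toeplitz subshift whose simplex of invariant measures is the Poulsen simplex $K_P$ and whose entropy function, transported to $K_P$, is positive at exactly one ergodic measure. Recall that every Choquet simplex is affinely homeomorphic to $\MT(Y)$ for some Toeplitz flow $Y$ (see \cite{DownarChoquet}), and, more sharply, that a suitable class of affine upper semicontinuous functions on a Choquet simplex can be realized as the entropy function of a minimal subshift, and --- combining this with the Toeplitz encoding of the simplex --- of a Toeplitz subshift (see \cite{D-book}). So the first step is to write down the target entropy function on $K_P$ and to check that it is admissible.

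I would take it to be $h\colon K_P\to[0,\infty)$ given by $h(\mu)=c\,\lambda_\mu(\{\mu_0\})$, where $c>0$ is fixed, $\mu_0\in\ext K_P$ is a distinguished extreme point, and $\lambda_\mu$ denotes the unique barycentric measure of $\mu$ on $\ext K_P$. Then $h$ is non-negative, bounded by $c$, affine (barycentric measures depend affinely on the point), and satisfies $h(\mu_0)=c$ while $h(\nu)=0$ for every other extreme point $\nu$. It is upper semicontinuous, because the set $\{(\mu,t)\colon \mu=t\mu_0+(1-t)\rho\text{ for some }\rho\in K_P\}$ is a closed subset of $K_P\times[0,1]$ with compact vertical fibres and $h(\mu)=c\cdot\max\{t\colon (\mu,t)\text{ lies in that set}\}$; in particular the only ``jump'' of $h$ is at $\mu_0$, where the value increases, which is compatible with upper semicontinuity. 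Being thus an affine, bounded, non-negative, upper semicontinuous function on a metrizable Choquet simplex with finite order of accumulation of entropy, $h$ falls within the scope of Downarowicz's theorem, which then yields a Toeplitz subshift $(X,T)$ with an affine homeomorphism $\MT(X)\to K_P$ carrying the entropy function $\mu\mapsto h_\mu(T)$ to $h$. Being Toeplitz, $(X,T)$ is minimal; $\MT(X)$ is affinely homeomorphic to $K_P$, hence is the Poulsen simplex by its uniqueness \cite{LOS}; and the measure corresponding to $\mu_0$ is the unique ergodic measure of $(X,T)$ with positive entropy. In particular $(X,T)$ is not entropy dense: if $\nu\neq\mu_0$ is a zero-entropy ergodic measure, then $\tfrac12\mu_0+\tfrac12\nu$ has entropy $c/2$, yet every ergodic measure in a small enough neighbourhood of it is distinct from $\mu_0$ and so has entropy $0$.

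The step I expect to be the main obstacle is the simultaneous realization. One must exhibit a single Toeplitz construction --- filling ``holes'' of densities $\theta_n\downarrow0$ over a base odometer --- whose fibres carry, in a weak$\ast$-dense way, all ergodic measures of $K_P$ (so that $\MT(X)=K_P$), while the holes are filled deterministically along all fibres except those seen by $\mu_0$, along which they are filled with exponentially many choices so as to create a fixed amount of fibre entropy. The delicate point is the combinatorial bookkeeping ensuring that $\mu_0$ remains a weak$\ast$-limit of the zero-entropy ergodic measures, so that no extreme point is lost and the simplex stays Poulsen. The entropy itself is no obstruction: it is bounded by $\sum_n(\theta_{n-1}-\theta_n)$ times a bounded per-hole cost, hence finite, and upper semicontinuity of $\mu\mapsto h_\mu(T)$ holds automatically since $X$ is expansive and is consistent with the jump of $h$ at $\mu_0$. (Note that the resulting $X$ has no periodic points, so the notions of closeability and linkability do not apply to it, which is why this example lies outside the framework of Theorem~\ref{thm:main}.) For the details of the construction I would follow \cite{D-book} and \cite{DownarChoquet}.
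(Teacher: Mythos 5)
Your proposal is correct and follows essentially the same route as the paper: the paper uses exactly your function (the harmonic prolongation of the indicator of one extreme point of $K_P$, i.e.\ $\nu\mapsto\xi^\nu(\{\mu_0\})$), checks it is bounded, affine, nonnegative and upper semicontinuous via \cite[Facts A.2.10 and A.2.20]{D-book}, and then invokes the Downarowicz--Serafin realization theorem \cite[Theorem 1]{DS} to produce the minimal Toeplitz subshift with $\MT(X)$ affinely homeomorphic to $K_P$ and entropy function equal to this map. The ``simultaneous realization'' you flag as the main obstacle is precisely what that cited theorem delivers, so no additional Toeplitz bookkeeping is needed beyond the reference.
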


\begin{proof}
Let $K_P$ be the Poulsen simplex. 
Fix an extreme point $\mu\in\ext K_P$. Let $\delta_\mu$ denote the probability measure concentrated on $\{\mu\}$ and let $\eta_\mu$ denote the characteristic function of $\{\mu\}\subset\ext K_P$. Given a point $\nu\in K_P$, we denote by $\xi^\nu$ the unique probability measure concentrated on $\ext K_P$ such that $\nu$ is the barycenter of $\xi^\nu$.
We define a function $\varphi \colon K_P\to \R$ by
\[
	\varphi(\nu)=\int_{\ext K_P} \eta_\mu(\lambda)\xi^\nu(d\lambda)=\xi^\nu(\{\mu\})\,.
\]	
This function is the harmonic prolongation of $\eta_\mu$ \cite[Definition A.2.18]{D-book}.
Since the characteristic function of a closed set is upper semicontinuous, we conclude by~\cite[Fact A.2.20]{D-book} that the same is true for $\varphi$.
By \cite[Fact A.2.10]{D-book} every upper semicontinuous function of a Choquet simplex is affine if and only if it is harmonic. Therefore, $\varphi$ is bounded, affine, nonnegative, and upper semicontinuous on $K_P$ and $\varphi|_{\ext K_P}=\eta_\mu$.

By~\cite[Theorem 1]{DS}, there exists a minimal Toeplitz subshift $(X,T)$ and an affine
(onto) homeomorphism $\psi^\ast\colon K_P \to \MT(X)$, such that for every $\mu \in K_P$,
$\varphi(\mu) =h(\psi^\ast(\mu))$, where $h$ denotes the entropy function that associates to every $T$-invariant probability measure its Kolmogorov-Sinai entropy~\cite[\S 4]{Walters}. This proves the proposition.
\end{proof}

\begin{proposition}\label{prop:counter-entropy-gap}
There is a shift space $X$ which is closeable and linkable, but ergodic measures are not entropy dense in $\mathcal{M}_\sigma(X)$.
\end{proposition}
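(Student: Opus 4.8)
The plan is to reuse the coded system $X$ presented by $(\Gamma,\Theta)$ from the proof of Proposition~\ref{prop:counter-mult-mme}; here the only feature needed is that the strictly ergodic base subshift $Y$ has $h(Y)>0$ (the strong positive recurrence of $\Gamma$ plays no role). That proof already records that $X$ is $\Per(X)$-closeable and that $\Per(X)$ is strongly linkable, so the task reduces to exhibiting $\mu\in\mathcal{M}_\sigma(X)$, an open neighbourhood $U\ni\mu$, and $\eps_0>0$ with $h_\nu\le h_\mu-\eps_0$ for every ergodic $\nu\in U$. Recall that $Y$ sits inside $X$ as the subshift of all sequences avoiding the symbol $2$ (these are exactly the points coded by paths that go ``up'' forever), and that its unique invariant measure $\mu_Y$ satisfies $h_{\mu_Y}=h_{\mathrm{top}}(Y)=h(Y)>0$ by the variational principle, while $\mu_Y([2])=0$. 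I would then let $p=(\omega_1 2)^\infty\in\Per(\Gamma,\Theta)$, which has period $2$ and $\gamma(p)([2])=\tfrac12$, and for $t\in(0,1)$ set $\mu_t=t\mu_Y+(1-t)\gamma(p)$. Since the entropy function is affine, $h_{\mu_t}=t\,h(Y)$, whereas $\mu_t([2])=\tfrac{1-t}{2}$.

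The key step will be the estimate $h_\nu\le\phi\bigl(\nu([2])\bigr)$ for every ergodic $\nu\neq\mu_Y$, where $\phi(a)=-a\log a-(1-a)\log(1-a)$ is the binary entropy function (so $\phi(a)\to 0$ as $a\to 0^{+}$ and $\phi$ is increasing on $[0,\tfrac12]$). First I would note that an ergodic $\nu$ with $\nu([2])=0$ is carried by the set of $2$-free points of $X$, which equals $Y$ because $Y$ is minimal; by unique ergodicity $\nu=\mu_Y$. Hence for $\nu\neq\mu_Y$ we have $a:=\nu([2])>0$, and then $\nu(Y)=0$ by ergodicity. Next I would use the Markov-like structure of $(\Gamma,\Theta)$: in any $x\in X$ each occurrence of $2$ forces the presenting path back to $v_0$, after which the block read until the next $2$ must be a prefix $\omega_1\dots\omega_k$ of the defining sequence $\omega$ of $Y$; consequently $x_1\dots x_n$ is determined by the positions of $2$ inside $[1,n]$ together with the (binary, length $m_1-1$) \emph{first partial block} $x_1\dots x_{m_1-1}$, where $m_1$ is the first $2$-position. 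This yields
\[
 H_\nu(x_1,\dots,x_n)\ \le\ H_\nu\bigl(\mathbf 1_{[2]}(x_1),\dots,\mathbf 1_{[2]}(x_n)\bigr)+\log 2\cdot\sum_{k=1}^{n}\nu(x_1\neq 2,\dots,x_k\neq 2),
\]
and since $\nu(x_1\neq 2,\dots,x_k\neq 2)$ decreases to $\nu(Y)=0$ as $k\to\infty$, the Cesàro averages of the last term tend to $0$. Dividing by $n$ and letting $n\to\infty$ gives $h_\nu(\sigma)=h_\nu\bigl(\sigma,\{[2],X\setminus[2]\}\bigr)\le H_\nu\bigl(\{[2],X\setminus[2]\}\bigr)=\phi(a)$.

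With this in hand the proposition follows quickly: choose $t$ close enough to $1$ that $t>\tfrac34$ (so $1-t\le\tfrac12$) and $\phi(1-t)<\tfrac12 h(Y)$, put $\eps_0=\tfrac14 h(Y)>0$, and let $U=\{\rho\in\mathcal{M}_\sigma(X):\rho([2])\in(0,1-t)\}$. Because $[2]$ is clopen in $X$, the map $\rho\mapsto\rho([2])$ is weak$\ast$ continuous, so $U$ is an open neighbourhood of $\mu_t$. Any ergodic $\nu\in U$ has $\nu([2])>0$, hence $\nu\neq\mu_Y$, hence $h_\nu\le\phi(\nu([2]))\le\phi(1-t)<\tfrac12 h(Y)<t\,h(Y)-\eps_0=h_{\mu_t}-\eps_0$; thus no ergodic measure in $U$ approximates $\mu_t$ to within $\eps_0$ in entropy, and ergodic measures are not entropy dense in $\mathcal{M}_\sigma(X)$. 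I expect the only genuinely delicate point to be the entropy estimate — precisely, verifying that the two-set partition $\{[2],X\setminus[2]\}$ generates $\nu$ modulo null sets, equivalently that the ``first partial block'' carries only a sublinear amount of information — which is exactly where the minimality of $Y$ and the marker role of the symbol $2$ are used; the remaining bookkeeping (affineness of entropy, continuity of $\rho\mapsto\rho([2])$, closeability and linkability) is routine or already available.
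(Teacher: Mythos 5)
Your proposal is correct, but it proves the proposition by a genuinely different route than the paper. The paper chooses $Y$ with $h(Y)$ \emph{strictly larger} than the Gurevi\v{c} entropy $h_\Gamma=\log\frac{1+\sqrt5}{2}$ of the graph and then invokes Thomsen's theory of synchronized systems (the Fisher cover, the Markov boundary $\partial_M X=Y$, and \cite[Theorem 7.4(c)]{Th}): in that regime the unique measure of maximal entropy is the one carried by $Y$, and \emph{every} other ergodic measure has entropy at most $h_\Gamma$, so the entropy gap immediately kills entropy density. You instead give a self-contained conditional-entropy estimate: since every edge labelled $2$ ends at $v_0$ and the only $2$-free continuation from $v_0$ is forced to read $\omega_1\omega_2\dots$, the $2$-positions determine everything past the first occurrence of $2$, which (after checking that $\nu(Y)=0$ for ergodic $\nu\neq\mu_Y$, so the initial $2$-free block contributes sublinearly) yields $h_\nu\le\phi(\nu([2]))$ for all ergodic $\nu\neq\mu_Y$; combining this with the affine combination $\mu_t=t\mu_Y+(1-t)\gamma\bigl((\omega_12)^\infty\bigr)$ and the clopen-cylinder neighbourhood $\{\rho:\rho([2])\in(0,1-t)\}$ refutes entropy density. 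Your bookkeeping (the chain-rule inequality, the identification of the $2$-free points of $X$ with $Y$, monotonicity of $\phi$ on $[0,\tfrac12]$, affineness of entropy) all checks out. What each approach buys: the paper's argument is shorter modulo the citation and gives extra structural information (intrinsic ergodicity of this example and a uniform entropy gap at level $h_\Gamma$, which is exactly what distinguishes this proposition from Proposition~\ref{prop:counter-mult-mme}); yours is elementary, avoids Thomsen's machinery and the strong positive recurrence of $\Gamma$ altogether, and works for any strictly ergodic base $Y$ with $h(Y)>0$, at the cost of exhibiting only the specific non-approximable measures $\mu_t$ rather than describing the full entropy structure of $\mathcal{M}_\sigma(X)$. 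The closeability and linkability assertions you import from the proof of Proposition~\ref{prop:counter-mult-mme} are handled there exactly as the paper does here, so nothing is missing on that side.
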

\begin{proof}
Let $Y$ be a strictly ergodic shift over $\{0,1\}$ with topological entropy $h(Y)>\log(1/2(1+\sqrt{5}))$ (logarithm of the golden mean). We repeat the construction from the proof of Proposition \ref{prop:counter-mult-mme}, but this time we conclude that $h(\partial_M X) > h_\Gamma$. Then we apply \cite[Theorem 7.4(c)]{Th} and obtain that the unique measure of maximal entropy of $X$ is the one supported on the Markov boundary. Furthermore, the entropy of every other ergodic invariant measure of $X$ is not greater than $h_\Gamma$. In particular, ergodic measures are not entropy dense in $\mathcal{M}_\sigma(X)$.
It is obvious that $X$ is $\Per(X)$-closeable and $\Per(X)$ is strongly linkable.
\end{proof}

The last example is also inspired by \cite{VC}, where it is used to show that \eqref{arc-con} does not imply \eqref{dense}.

\begin{proposition}\label{prop:Dyck}
There is a mixing shift space $X$ such that $\MT(X)$ is arcwise connected, but not Poulsen.
Moreover, there are sets $K',K''\subset \Per(X)$ such that $K'$, $K''$, $K'\cap K''$ are infinite and linkable, every measure in $\MT(X)$ is closeable with respect to $K'\cup K''$, but $K'\cup K''$ is not linkable.
\end{proposition}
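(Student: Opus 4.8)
The example is the Dyck shift $X=D_N$ on the alphabet $\{a_1,\dots,a_N,b_1,\dots,b_N\}$ with $N\ge 2$, whose forbidden words are the blocks $a_i u b_j$ with $i\ne j$ and $u$ neutral; recall $D_N$ is a coded, topologically mixing shift space and that a periodic point is exactly $w^\infty$ for a word $w$ with $w^2$ admissible. The plan is to take $K'$ to be the set of $w^\infty\in\Per(D_N)$ such that every prefix of $w$ has at least as many openings as closings (call such $w$ \emph{left-admissible}), $K''$ the mirror set (every prefix has at least as many closings as openings), and to observe that $K'\cap K''$ consists of the periodic points presented by balanced, well-formed bracket words, whereas rotating any presenting word to a position where the opening$-$closing count is minimal shows $K'\cup K''=\Per(D_N)$. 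All three families are infinite and are closed under concatenation of their presenting words: for $K'$ and $K''$ this is because a left- or right-admissible word, read starting from an arbitrary ``floor'', never descends below that floor, so two such words juxtapose with no connecting block; for $K'\cap K''$ it is the usual free concatenation of Dyck words through the empty vertex of a presenting graph (Proposition~\ref{concatenation-coded}). Hence $K'$, $K''$ and $K'\cap K''$ are each (strongly) linkable.

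Next I would prove closeability with respect to $K'\cup K''=\Per(D_N)$. Let $\mu$ be ergodic, let $\phi$ be the observable equal to $+1$ on openings and $-1$ on closings, and let $x$ be $\mu$-generic, with prefix balances $h(n)=(\#\text{openings}-\#\text{closings in }x_0\ldots x_{n-1})$. If $\int\phi\,d\mu>0$ then $h(n)\to+\infty$, so $h$ attains its infimum at some $n^\ast$; the point $x'=\sigma^{n^\ast}(x)$ is again $\mu$-generic and all its prefix balances are $\ge 0$, whence each $x'_0\ldots x'_{n-1}$ is left-admissible and $(x'_0\ldots x'_{n-1})^\infty\in K'$ has period $n$ and shadows $x'$ for $n$ steps; thus $x'$ is $K'$-closeable. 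The case $\int\phi\,d\mu<0$ is symmetric inside $K''$, and when $\int\phi\,d\mu=0$ the walk $h$ is recurrent, $\mu$ is carried by the well-formed part, and one closes inside $K'\cap K''$ by appending a short matched tail each time $h$ returns near its starting value. So every ergodic measure is $(K'\cup K'')$-closeable, and since CO-measures on $K'\cup K''$ are ergodic and, by Theorem~\ref{thm:closeable}, dense in $\MTe(D_N)$, we get $\overline{\MTe(D_N)}=\overline{\MTp(K')}\cup\overline{\MTp(K'')}$.

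For arcwise connectedness, linkability of $K'$ and $K''$ together with Theorem~\ref{thm:linkable} gives $\overline{\MTp(K')}=\cch(\MTp(K'))$ and likewise for $K''$, and one checks that these are Poulsen simplices or singletons (compact convex sets whose dense supply of extreme points are extreme already in $\MT(D_N)$), so by \cite{LOS} each has an arcwise connected extreme boundary; as they overlap in the non-empty set $\overline{\MTp(K'\cap K'')}$ and their union is $\overline{\MTe(D_N)}$, the extreme boundary $\MTe(D_N)$ is a union of two arcwise connected sets meeting in a non-empty set, hence arcwise connected. It remains to see that $\MT(D_N)$ is not Poulsen, i.e.\ that $\MTe(D_N)$ is not dense, and since $\overline{\MTe(D_N)}=\overline{\MTp(D_N)}$ it suffices to exhibit $\mu^\ast=\tfrac12\gamma((a_1a_2)^\infty)+\tfrac12\delta_{b_1^\infty}\in\MT(D_N)$ with $\mu^\ast\notin\overline{\MTp(D_N)}$. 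Suppose $\gamma(w_n^\infty)\to\mu^\ast$; then the frequency of the symbol $a_2$ in $w_n$ tends to $\mu^\ast([a_2])=\tfrac14$, that of $b_2$ tends to $\mu^\ast([b_2])=0$, and $\#\text{openings}-\#\text{closings}$ in $w_n$ is $o(|w_n|)$. In $D_N$ an $a_2$ can be matched only by a $b_2$, so all but $o(|w_n|)$ of the $\tfrac14|w_n|$ occurrences of $a_2$ are \emph{unmatched}, i.e.\ they lie in the string $\alpha$ of unmatched openings of the normal form $\beta\alpha$ of $w_n$. But $\beta$, being unmatched closings of a word with only $o(|w_n|)$ $b_2$'s, is a string of $b_1$'s up to $o(|w_n|)$ exceptions, and admissibility of $w_n^2$ forces the last $\min(|\alpha|,|\beta|)$ symbols of $\alpha$ to carry the same indices, in reverse, as the first $\min(|\alpha|,|\beta|)$ symbols of $\beta$, hence to be $a_1$'s up to $o(|w_n|)$ exceptions; since $|\alpha|-|\beta|=o(|w_n|)$, this leaves room for only $o(|w_n|)$ $a_2$'s in $\alpha$, contradicting $\tfrac14|w_n|$. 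Finally the same count shows $K'\cup K''$ is not linkable: linking $(a_1a_2)^\infty\in K'$ with $b_1^\infty\in K''$ in proportions near $(\tfrac12,\tfrac12)$ would force a periodic point that reads a long block $(a_1a_2)^m$ and then, essentially immediately, a long block $b_1^{m'}$, but the alternating stack $a_1a_2a_1a_2\ldots$ must first be unwound by a block $b_2b_1b_2b_1\ldots$ of comparable length, violating the near-proportion constraint of Definition~\ref{def:linkability-2}.

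The main obstacle is the non-density estimate and, within it, the control of the normal form of a long periodic word: one must turn the informal statement ``$\beta$ is almost entirely $b_1$'s, the joint condition then forces $\alpha$ to be almost entirely $a_1$'s, so there is no room left for the $a_2$'s'' into a clean quantitative argument, which requires careful bookkeeping of matched versus unmatched brackets together with the precise form of the joint/mirror condition for periodic points of $D_N$. By contrast the linkability and closeability steps are routine concatenation and shadowing arguments in the spirit of Propositions~\ref{prop:beta-link} and~\ref{prop:S-gap-closeable}, and the identification of $\overline{\MTp(K')}$ and $\overline{\MTp(K'')}$ with Poulsen simplices is a soft point of Choquet theory. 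The construction realizes the Dyck shift as the example, anticipated in \cite{VC}, of a mixing system whose ergodic measures are arcwise connected but not dense.
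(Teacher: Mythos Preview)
Your construction is exactly the one the paper sketches: the Dyck shift, with $K'$ (resp.\ $K''$) the periodic points whose presenting word reduces to a string of opening (resp.\ closing) brackets, and $K'\cap K''$ the balanced ones. Your arguments for strong linkability of $K'$, $K''$, $K'\cap K''$ via free concatenation, and your counting argument showing $\mu^\ast=\tfrac12\gamma((a_1a_2)^\infty)+\tfrac12\delta_{b_1^\infty}$ is not a limit of CO-measures (hence $\MT$ is not Poulsen and $K'\cup K''$ is not linkable) are correct and considerably more explicit than what the paper provides; the paper merely writes ``It can be shown that \ldots'' and leaves all details to the reader.

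There is, however, a real gap in your arcwise-connectedness step. You assert that $\overline{\MTp(K')}$ is a Poulsen simplex and then invoke \cite{LOS} to get arcwise connectedness of its extreme boundary. But $\overline{\MTp(K')}$ is \emph{not} a face of $\MT(D_N)$ and has non-ergodic extreme points: for instance $\nu=\tfrac12\delta_{a_1^\infty}+\tfrac12\delta_{b_1^\infty}$ lies in $\overline{\MTp(K')}$ (approximate by $(a_1^n b_1^n)^\infty$), while its ergodic component $\delta_{b_1^\infty}$ does not (it fails $\int\phi\,d\mu\ge 0$); any decomposition $\nu=\lambda\nu_1+(1-\lambda)\nu_2$ inside $\overline{\MTp(K')}$ forces $\nu_1=\nu_2=\nu$, so $\nu\in\ext\overline{\MTp(K')}\setminus\MTe(D_N)$. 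Thus even if $\ext\overline{\MTp(K')}$ were arcwise connected, this would not yield arcwise connectedness of $\MTe(D_N)\cap\overline{\MTp(K')}$, which is the set you actually need. (It is also not clear that $\overline{\MTp(K')}$ is a Choquet simplex at all.) The paper does not fill this in either---it simply cites \cite{VC}---so your attempt goes beyond the paper's sketch, but this particular route does not close. A separate, minor point: in the $\int\phi\,d\mu=0$ case you claim the height walk is recurrent; this is not automatic for a general ergodic zero-mean process, and you should instead argue via $\liminf h(n)>-\infty$ versus $=-\infty$ to land in $K'$ or $K''$.
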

The proof of this proposition follows from the inspection of the definition of the Dyck shift and uses the same techniques as presented above.
We leave the details to the reader.
The Dyck shift space was introduced  by Krieger \cite{Krieger} as a counter-example for a conjecture of B. Weiss. Krieger proved that this shift has exactly two measures of maximal entropy, both of which are Bernoulli. We sketch its construction.

The easiest way to describe the Dyck shift is in terms of brackets. We take as alphabet the set
$\alf_D=\{[\,,]\,,(\,,)\,\}$ that consists of two pairs of matching brackets. The language of the Dyck shift comprises
all words over $\alf_D$ in which the brackets are properly nested, in other words, opened and closed in the right order.
To make it precise we define 
$\Sigma=\{[\,,]\,,(\,,)\,,\mathbf{1},\mathbf{0}\}$. 
Let $\cdot\colon\Sigma\times\Sigma\to\Sigma$ be defined by
\begin{align*}
[\,\cdot \,] = (\,\cdot\,)&=\mathbf{1},& [\,\cdot\,)=(\,\cdot\,]&=\mathbf{0},\\
a\cdot\mathbf{1}=\mathbf{1}\cdot a&=a,& a\cdot\mathbf{0}=\mathbf{0}\cdot a&=\mathbf{0},
\end{align*}
where $a\in\Sigma$.

Let $\emptyw$ denote the empty word. The \emph{reduction} map $\red$ from $\alf_D^*$ to $\Sigma^*$ is given by
\begin{align*}
\red(\emptyw)&=\mathbf{1},&\red(w_1\ldots w_n)&=w_1\cdot\ldots\cdot w_n.
\end{align*}
The language of the Dyck shift $X_D$ is the set $\lang_D=\{w\in\alf_D^*\colon \red(w)\neq\mathbf{0}\}$.
The Dyck shift is coded. It is presented by the labelled graph $(G_D,\lab_D)$, where $G_D=(V_D,E_D)$ is given by $V_D=\{v_1,v_2,\ldots\}$
and
\begin{gather*}
E_D=\{v_i\to v_{2i}\colon i\in\N\}\cup\{v_i\to v_{2i+1}\colon i\in\N\}\cup\{v_i\to v_{\lfloor i/2 \rfloor}\colon i\in\N\}\cup\\
\cup \{e',e''\colon \ini(e')=\ini(e'')=\term(e')=\term(e'')=v_1,\,e'\neq e''\}
\end{gather*}
with the labelling $\lab_D$ depicted on Figure \ref{fig:Dyck}.

\begin{figure}
\includegraphics[scale=0.6]{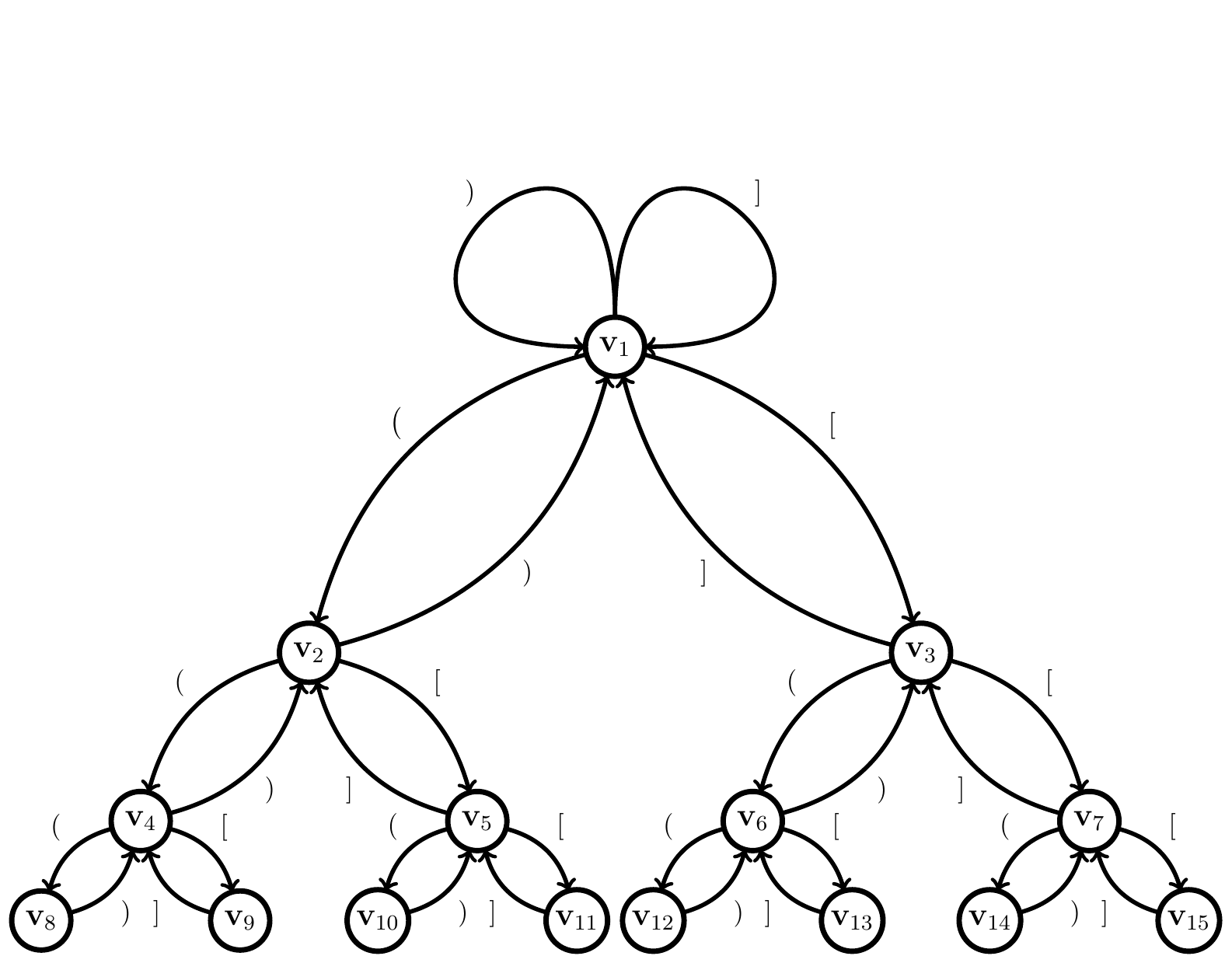}
\caption{A part of a labeled graph $(G_D,\Theta_D)$ presenting the Dyck shift.}
\label{fig:Dyck}
\end{figure}

The graph $G_D$ is build on an infinite complete binary tree (a tree with a countably infinite number of levels, in which every vertex has two children, so that there are $2^d$ vertices at level $d$ and the set of all vertices is countably infinite).
The vertex at the first level is called a \emph{root}. There are two loops $e'$ and $e''$ at the root labelled by $)$ and $]$. Each vertex has two edges leading to two vertices one level down and labelled by $($ and $[$, respectively. Each edge leading down is accompanied by an opposite edge labelled by matching bracket.

\begin{proof}[Sketch of the proof of Proposition \ref{prop:Dyck}.]
Let $\Per_D(\sigma)$ be the set of periodic points of the Dyck shift.
If $x\in \Per_D(\sigma)$ then there exits the shortest word $w\in\lang_D$ such that $x=w^\infty$.
Define
\begin{align*}
K'&=\{w^\infty\in\Per_D(\sigma)\colon\red(w)\in\{\mathbf{1}\,,[\,,(\,\}^\ast\,\},\\
K''&=\{w^\infty\in\Per_D(\sigma)\colon\red(w)\in\{\mathbf{1}\,,)\,,]\,\}^\ast\,\}.
\end{align*}
Then
\[
K'\cap K''=\{w^\infty\in\Per_D(\sigma)\colon\red(w)=\mathbf{1}\}.
\]
It can be shown that the (infinite) sets $K',K'',K'\cap K''\subset \Per_D(\sigma)$ are strongly linkable and  every measure in $\M_{\sigma}(X_D)$ is closeable with respect to $K'\cup K''$, but $K'\cup K''$ is not linkable.
\end{proof}

\subsection{Measures without generic points}
We prove existence of a shift space such that for every point and every continuous function the Birkhoff averages converge, but not necessarily to a unique value.

\begin{proposition}\label{prop:generic-example}
There exists a topologically mixing shift space $X$ with exactly two ergodic measures such that every point $x\in X$ is a generic point for one of them. In particular, non-trivial convex combination of these ergodic measures has no generic points.
\end{proposition}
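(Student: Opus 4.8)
\section*{Proof proposal for Proposition~\ref{prop:generic-example}}

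The plan is to build $X$ as a subshift over a finite alphabet that \emph{merges} two uniquely ergodic minimal subshifts in such a way that (i) both of them embed in $X$, (ii) $X$ is topologically mixing, and (iii) \emph{every} orbit in $X$ is, up to a set of iterates of density zero, eventually governed by one of the two, so that the empirical measures of every point converge. Concretely, fix two minimal, uniquely ergodic, non-periodic subshifts $Y_0\subseteq\{0,1\}^{\N}$ and $Y_1\subseteq\{2,3\}^{\N}$ (e.g.\ two Sturmian subshifts over disjoint alphabets) with unique invariant measures $\mu_0$ and $\mu_1$; these are distinct (disjoint alphabets) and non-atomic ($Y_0,Y_1$ being infinite and minimal). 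Over the alphabet $\{0,1,2,3\}$ one specifies $X$ through its language: an admissible word is a factor of a concatenation $w_1w_2w_3\cdots$ in which each $w_k$ is a factor of $Y_0$ or of $Y_1$, the lengths $|w_k|$ grow along a prescribed fast sequence, and a nested system of rules governs when a \emph{type change} (a $Y_0$-factor immediately followed by a $Y_1$-factor, or conversely) is allowed, so as to guarantee that along every admissible infinite sequence the asymptotic density of the coordinates lying in $Y_0$-factors exists and equals $0$ or $1$.

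Granting such an $X$, I would verify the three required properties as follows. First, $Y_0,Y_1\subseteq X$, so $\mu_0,\mu_1\in\mathcal{M}_\sigma(X)$ and they are ergodic (being the unique measures of the minimal systems $Y_0,Y_1$); in particular $X$ has at least two ergodic measures. Second, every $x\in X$ is generic for $\mu_0$ or for $\mu_1$: if the density of $Y_0$-coordinates of $x$ equals $1$, then, writing $x,\dots,\sigma^{n-1}(x)$ as an interlacing of long $Y_0$-factors (of total density $\to1$) with $Y_1$-factors (of total density $\to0$), the uniform convergence of Birkhoff averages along $Y_0$ (equivalent to unique ergodicity of $Y_0$) together with the estimates of Lemma~\ref{lem:aux} yields $\Emp(x,n)\to\mu_0$; the density-$0$ case gives $\Emp(x,n)\to\mu_1$ symmetrically, and by construction there is no other case. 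Combining these two points, every ergodic $\nu\in\mathcal{M}_\sigma(X)$ is $\mu_0$ or $\mu_1$ (a $\nu$-generic point exists and must be $\mu_0$- or $\mu_1$-generic), hence $\mathcal{M}_\sigma(X)=\{t\mu_0+(1-t)\mu_1:t\in[0,1]\}$ and $X$ has exactly two ergodic measures. Third, $X$ is topologically mixing: given admissible words $u,v$, for every sufficiently large $m$ one extends $u$ inside its current factor, inserts a short connecting factor of the type of $v$, and continues with long factors of that type, producing an admissible word $upv$ with $|p|=m$; such short ``probes'' of the opposite type are permitted by the transition rules precisely because they occur at late positions and hence do not disturb the density-$0$-or-$1$ dichotomy. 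Finally, the ``in particular'' part: if $0<t<1$, a generic point $x$ of $t\mu_0+(1-t)\mu_1$ would satisfy $V_\sigma(x)=\{t\mu_0+(1-t)\mu_1\}$, contradicting $V_\sigma(x)\in\{\{\mu_0\},\{\mu_1\}\}$.

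The hard part is the construction itself: designing the transition rules so that (ii) and the density dichotomy in (iii) hold \emph{simultaneously}. These requirements pull against each other -- mixing forces the orbit of a transitive point to re-enter the ``$Y_1$-part'' of $X$ infinitely often even though that point is, say, $\mu_0$-generic, so those re-entries must have density zero yet stay syntactically admissible; conversely, a naive free concatenation of $Y_0$- and $Y_1$-factors would admit points whose block types oscillate with positive density on both sides, generic for no measure at all. I expect this to be resolved by a self-similar, scale-by-scale construction of the admissible words -- organizing the $w_k$ into a hierarchy in which, at every scale, a type change is followed by a stretch of the new type that is long relative to everything assembled so far at lower scales -- and then proving by induction on the scale that any admissible infinite word commits to a type, while keeping at each scale enough freedom to splice in the short opposite-type probes needed for mixing. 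An alternative realization of the same idea is a cutting-and-stacking (equivalently, Bratteli--Vershik) construction with two weakly coupled column families whose coupling decays rapidly with the level; the density/mixing analysis then applies in the same way.
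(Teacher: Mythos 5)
There is a genuine gap: the object whose existence the proposition asserts is never actually constructed. Your whole argument is conditional on ``transition rules'' with the stated properties, and you yourself flag that designing them is ``the hard part''; but that design \emph{is} the proposition. Concretely, three things are left unproved and are not routine: (a) that the rules can be formulated as conditions on finite words (so that the allowed words form a factorial, extendable language and hence define a subshift) --- your appeal to probes being allowed ``because they occur at late positions'' is not available at this level, since a factor of a point does not know its position, and the naive rule ``the minority type occupies a negligible fraction'' is not closed under passing to subwords; (b) that every point of the resulting subshift has $Y_0$-coordinate density existing and equal to $0$ or $1$ (``by construction there is no other case'' is exactly the claim that needs an argument; free concatenation with hierarchically growing lengths produces points whose empirical measures oscillate between $\mu_0$ and $\mu_1$, so the rules must provably exclude all such points, not just the obvious ones); and (c) that mixing survives whatever restrictions achieve (b). Until rules satisfying (a)--(c) are exhibited and verified, the second paragraph of your proposal (which is fine as far as it goes, including the use of unique ergodicity of $Y_0,Y_1$ and Lemma~\ref{lem:aux}, and the deduction of ``exactly two ergodic measures'' and of the ``in particular'' clause) has nothing to stand on.

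Note also that your symmetric set-up, with two non-atomic measures on disjoint alphabets, makes the problem strictly harder than necessary, and it is precisely this extra symmetry that creates the tension you describe. The paper's construction avoids it by taking \emph{one} of the two ergodic measures to be the point mass at the fixed point $0^\infty$: starting from a strictly ergodic non-periodic $Y\subset\{0,1\}^{\N}$, a word is allowed if it lies in $\lang(Y)$, or is a block of $0$'s, or is obtained by gluing two allowed words with a long $0$-block subject to the quantitative rule that the total number of $1$'s is at most $\log_2$ of the total length. This is a condition on finite words (so it defines a subshift), the $0$-blocks provide the connecting material for topological mixing for free, and any point that is not of the form $u0^\infty$ or $vy$ with $y\in Y$ is forced by the $\log_2$ bound to have $1$'s of density zero, hence is generic for $\delta_{0^\infty}$; points of the first two forms are generic for $\delta_{0^\infty}$ or for the unique measure on $Y$. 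In other words, the glue is itself generic material for the second ergodic measure, which resolves in one stroke the conflict between mixing and the density dichotomy that your sketch leaves open. If you want to keep your two-non-atomic-measures variant, you would need to supply the hierarchical rules explicitly and prove (a)--(c); for the proposition as stated, the asymmetric choice is both sufficient and much simpler.
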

\begin{proof}
Let $Y$ be a strictly ergodic shift over $\{0,1\}$. Assume that $Y\neq \{0^\infty\}$. Note that it implies that there is a $K>0$ such that $0^k\notin \lang(Y)$ for all $k\ge K$. We define a language $L$ for a new shift space $X$.
We say that a word $w$ is \emph{allowed} if 
\begin{enumerate}
  \item\label{lang-1} $w=0^j$ for some $j>0$, or
  \item\label{lang-2} $w\in \lang(Y)$, or
  \item\label{lang-3} $w=0^a u 0^b$ for some allowed word $u$ and $a,b\ge 0$, or
  \item\label{lang-4} $u=v0^kw$ for allowed words $v$ and $w$ and $k\ge K$ such that
\[
\#\{i:v_i=1\}+\#\{i:w_i=1\}\le \log_2(k+|w|+|v| ).
\]
\end{enumerate}
Formally, the set of allowed words $L$ is defined by induction on the length of the word $w$. The details are left to the reader.
Note that in order to glue an allowed word $v$ with another allowed word $w$, one has to count $1$'s occurring in $v$ and $w$ and glue $v$ and $w$ by inserting a long block of $0$'s in between. Suppose that there are $n$ occurrences of $1$ in $u$ and $v$ altogether. There should be enough $0$'s glued in so that the resulting word $v0^kw$ has length greater than or equal to $2^n$.

The set $L$ of allowed words is a language of a shift space $X$, because $L$ is nonempty (it contains $\lang(Y)$ and $\lang(\{0^\infty\})$ by \eqref{lang-1} and \eqref{lang-2}. By \eqref{lang-3} one can always extend an allowed word by adding $0$ at the beginning or at the end.

It follows from \eqref{lang-4} that any two allowed words can be joined by long enough sequence of $0$'s, hence the shift space $X$ is topologically mixing.

There are two minimal sets: $Y$ and $\{0^\infty\}$. Hence there are two ergodic measures: the first is the unique invariant measure $\mu_M$ on $M$, the second is the invariant measure $\mu_0$ concentrated on the fixed point $0^\infty$.

We claim that every point $x\in X$ is generic either for $\mu_M$ or for $\mu_0$.

If $x=u0^\infty$ for some allowed word $u$, or $x=vy$ for some allowed word $v$ and $y\in M$ then we are done.
If neither of the above hold, then $x$ is of the form
\[
x=x_1x_2x_3\ldots=0^{l(0)}u_1 0^{l(1)}u_20^{l(2)}u_3\ldots
\]
where $l(0)\ge 0$, $u_j\in \lang(Y)$ and $l(j)\ge K$ for each $j\ge 1$. It follows from \eqref{lang-4} that the number of occurrences of $1$ in $x_1\ldots x_n$ decreases exponentially fast with $n\to\infty$. Hence $x_i=0$ for all $i$ in a set of asymptotic density $1$. This means that $x$ is a generic point for $\mu_0$.
\end{proof}

Any finite number of ergodic measures can be achieved by a similar construction.

\section{Open questions}\label{sec:openqu}

We close this paper by offering two questions for further research:
\begin{enumerate}
  \item Assuming that $T$ is closeable with respect to a linkable set  $K\subset\Per(T)$ and $\CM(T)$ is not a single periodic orbit, does $T$ have positive topological entropy?
  \item Assuming that $\MT(X)$ is the Poulsen simplex, is it true that every $\mu\in\MT(X)$ has a generic point?
\end{enumerate}


\section*{Acknowledgements} We are grateful to Christian Bonatti, Keith Burns, Vaughn Climenhaga, Sylvain Crovisier, Lorenzo J. D\'{\i}az, Martha {\L}{\c{a}}cka, Piotr Oprocha, and Paulo Varandas   for numerous discussion and remarks related to this paper.

\end{document}